\newtheorem{thm}{Theorem}
\newtheorem{crl}{Corollary}
\newtheorem{cnj}{Conjecture}
\newtheorem{prp}{Proposition}
\newtheorem{lmm}{Lemma}
\newtheorem{rmk}{Remark}
\newtheorem{dfn}{Definition}
\newtheorem{exm}{Example}
\newcommand{\subjclass}[2][2010]{%
  \let\@oldtitle\@title%
  \gdef\@title{\@oldtitle\footnotetext{#1 \emph{Mathematics subject classification(s).} #2}}%
}
\newcommand{\keywords}[1]{%
  \let\@@oldtitle\@title%
  \gdef\@title{\@@oldtitle\footnotetext{\emph{Key words and phrases.} #1.}}%
}
\title{On the algebraicity of some products of special values of Barnes' multiple gamma function}
\author{Tomokazu Kashio\thanks{Tokyo University of Science, \texttt{kashio\_tomokazu@ma.noda.tus.ac.jp}}}
\subjclass{11M32, 11R27, 11R37, 11R42, 11F67, 14K22, 33B15.}
\keywords{Stark's conjecture, CM-periods, the multiple gamma function}
\begin{document}
\maketitle

\begin{abstract}
We consider partial zeta functions $\zeta(s,c)$ associated with ray classes $c$'s of a totally real field.
Stark's conjecture implies that an appropriate product of $\exp(\zeta'(0,c))$'s is an algebraic number which is called a Stark unit.
Shintani gave an explicit formula for $\exp(\zeta'(0,c))$ in terms of Barnes' multiple gamma function.
Yoshida ``decomposed'' Shintani's formula: he defined the symbol $X(c,\iota)$ satisfying that $\exp(\zeta'(0,c))=\prod_{\iota} \exp(X(c,\iota))$
where $\iota$ runs over all real embeddings of $F$.
Hence we can decompose a Stark unit into a product of $[F:\mathbb Q]$ terms.
The main result is to show that $([F:\mathbb Q]-1)$ of them are algebraic numbers.
We also study a relation between Yoshida's conjecture on CM-periods and Stark's conjecture. 
\end{abstract}

\section{Introduction.}

We recall the rank $1$ abelian Stark conjecture.
Let $K/F$ be an abelian extension of number fields, and $S$ a finite set of places of $F$.
In this paper, we always assume that 
\begin{center}
$S$ contains all primes ramifying in $K/F$ and all infinite places of $F$, and $|S| \geq 3$.
\end{center}
We put $G:=\mathrm{Gal}(K/F)$.
The partial zeta function $\zeta_S(s,\tau)$ associated with $\tau \in G$ is defined by
\begin{equation*}
\zeta_S(s,\tau):=\sum_{\mathfrak a \subset \mathcal O_F,\, (\mathfrak a,S)=1,\ (\frac{K/F}{\mathfrak a})=\tau} 
N\mathfrak a^{-s}.
\end{equation*}
Here $\mathfrak a$ runs over all integral ideals of $F$, relatively prime to any prime ideal in $S$, 
whose image under the Artin map $(\frac{K/F}{*})$ is equal to $\tau$.
This series converges for $\mathrm{Re}(s)>1$, has a meromorphic continuation to $\mathbb C$, which is analytic at $s=0$.

\begin{cnj}[The rank $1$ abelian Stark conjecture] \label{sc}
Let $F,K,S,G$ be as above.
We denote the number of roots of unity in $K$ by $e_K$.
Assume that there exists a place $v$ in $S$ splitting completely in $K$, and fix a place $w$ of $K$ lying above $v$. 
Then there exists an element $\epsilon \in K^\times$, which is called a Stark unit, satisfying 
\begin{enumerate}
\item $\epsilon$ is a $v$-unit.
\item $\log |\tau(\epsilon)|_w=-e_K \zeta_S'(0,\tau)$ for all $\tau \in G$. 
\item $K(\epsilon^{1/e_K})/F$ is an abelian extension.
\end{enumerate}
\end{cnj}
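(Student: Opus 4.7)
The statement is a well-known open conjecture, so any ``proof plan'' here is necessarily speculative; I will sketch the strategy appropriate to the setting of this paper, namely $F$ totally real and the splitting place $v$ archimedean.

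The first step would be to exhibit an explicit candidate for $\epsilon$. Shintani's formula expresses $\exp(\zeta_S'(0,\tau))$ as a product of elementary factors and special values of Barnes' multiple gamma function $\Gamma_r$, obtained by slicing a fundamental cone for the totally positive units acting on an appropriate lattice. Since condition (ii) pins down $|\tau(\epsilon)|_w$ for every $\tau \in G$, the collection of Shintani expressions for varying $\tau$ determines the archimedean absolute values of all conjugates of $\epsilon$, and hence $\epsilon$ itself up to a root of unity. The plan is therefore to \emph{define} $\epsilon$ via the appropriate ratio of Shintani products and verify (i), (ii), (iii) directly. Step (i) would be essentially automatic from the construction, and (ii) is built into Shintani's formula.

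The real work lies in showing that the product is algebraic, lies in $K$, and satisfies the Kummer condition (iii). I would attack algebraicity via Yoshida's decomposition $\zeta'(0,c) = \sum_\iota X(c,\iota)$ along the real embeddings $\iota$ of $F$, trying to recognize each $\exp(X(c,\iota))$ as a CM-period --- which, by Yoshida's conjecture, is an algebraic multiple of an explicit power of $\pi$. The product over $\iota$ would then be algebraic; the Galois-equivariance of the decomposition under $G$ (together with class field theory) should then force $\epsilon \in K$ with the correct $G$-action, and the Kummer condition would follow from a further functoriality check together with compatibility with complex conjugation.

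The main obstacle is precisely this second stage: proving algebraicity of the individual factors $\exp(X(c,\iota))$, or even of their $\iota$-product, requires transcendence information about Barnes' gamma function that is presently out of reach; Yoshida's CM-period conjecture is itself open, and it is not clear how to extract its corollaries for $\epsilon$ without proving it outright. The announced contribution of this paper --- that $[F:\mathbb{Q}]-1$ of the $[F:\mathbb{Q}]$ factors $\exp(X(c,\iota))$ are algebraic --- is exactly a partial resolution of this obstacle: it reduces Stark, in this setting, to controlling one remaining factor per class, without touching the underlying transcendence problem.
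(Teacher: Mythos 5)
The statement you were asked to prove is Conjecture~\ref{sc}, which is the rank~$1$ abelian Stark conjecture; it is not a theorem of the paper, and the paper offers no proof of it (it merely records that the degenerate case $|S|=2$ is known, citing Dasgupta's thesis). You correctly recognized this, and your sketch --- define $\epsilon$ via Shintani's formula, pass to Yoshida's decomposition $\zeta'(0,c)=\sum_\iota X(c,\iota)$, and try to control each factor $\exp(X(c,\iota))$ via CM-period considerations --- is a faithful description of the framework the paper actually works in, and your identification of the transcendence of the individual $\exp(X(c,\iota))$ as the genuine obstruction, together with the paper's contribution (algebraicity of $[F:\mathbb{Q}]-1$ of the $[F:\mathbb{Q}]$ factors, Theorem~\ref{main}) as a partial reduction rather than a resolution, is accurate. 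There is no paper proof to compare against, so no gap analysis applies; the only thing to flag is that (ii) does not quite determine $\epsilon$ ``up to a root of unity'' on its own --- it fixes the $G$-orbit of absolute values at the places above $v$, but one still needs the unit condition (i), and even then uniqueness is only up to a root of unity when the $S$-unit rank is right, which is part of what makes the conjecture nontrivial.
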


We note that the statement of the rank $1$ abelian Stark conjecture in the case $|S|=2$ is slightly different from the above, and 
has been proved (for the proof, see \cite[Proposition 4.3.11]{Da}).

In the following, we only consider the case when the splitting place $v$ is a real infinite place, and focus on the property that $\epsilon$ is a unit.
We note that 
\begin{itemize}
\item If more than one place in $S$ splits completely in $K$, then $\zeta_S'(0,\tau)=0$ for all $\tau \in G$, so Conjecture \ref{sc} is trivial.
Therefore, when $v$ is a real infinite place, we may assume that $F$ is totally real. (Note that a complex infinite place always splits completely.)
\item There exists a real infinite place of $F$ splitting completely if and only if $K$ is not totally complex.
\item We can write a Stark unit explicitly as $\epsilon=\exp(-e_K \zeta_S'(0,\mathrm{id}))$ when $w$ is a real infinite place.
\end{itemize}
Therefore our problem can be formulated as follows:

\begin{dfn}
Let $\mathbb R_+$ be the multiplicative group of positive real numbers.
For any subgroup $X$ of $\mathbb R_+$, 
we put $X^\mathbb Q:=\{x \in \mathbb R_+ \mid $ there exists $n \in \mathbb N$ satisfying $x^n \in X\}$.
\end{dfn}

\begin{cnj}[A part of Stark's conjecture] \label{wsc}
Let $K$ be a finite abelian extension of a totally real field $F$ and $G:=\mathrm{Gal}(K/F)$.
Assume that $K$ is not totally complex. We fix a real embedding of $K$
and regard $F,K$ as subfields of $\mathbb R$ (i.e., $F \subset K \subset \mathbb R$).
Then we have  
\begin{equation} \label{fwsc}
\exp (\zeta_S'(0,\tau)) \in \left(\mathcal O_K^\times \cap \mathbb R_+ \right)^\mathbb Q \qquad(\tau \in G).
\end{equation}
Here we denote by $\mathcal O_K^\times$ the group of units in $K$.
\end{cnj}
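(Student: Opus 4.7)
The plan is to reduce Conjecture \ref{wsc} to Yoshida's conjecture on CM-periods via the Shintani--Yoshida decomposition of $\zeta_S'(0,\tau)$. First, I would apply Shintani's cone decomposition to the ray class $c$ corresponding to $\tau$, writing $\zeta_S(s,\tau)$ as a finite sum of Shintani zeta functions, and then invoke Shintani's explicit formula to express $\exp(\zeta_S'(0,\tau))$ as a product of special values of Barnes' multiple gamma function $\Gamma_n(z,\omega)$, up to an obviously algebraic factor (a product of roots of unity and elementary terms).

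Next, I would apply Yoshida's decomposition $\log \Gamma_n(z,\omega) = \sum_\iota X(\cdot,\iota)$ to each resulting $\log \Gamma_n$ term, obtaining the factorization
\begin{equation*}
\exp(\zeta_S'(0,\tau)) = \prod_{\iota : F \hookrightarrow \mathbb R} \exp(X(c,\iota)).
\end{equation*}
For each embedding $\iota$ other than the distinguished embedding $\iota_0$ (the fixed real embedding realizing $F \subset \mathbb R$), the strategy is to identify $\exp(X(c,\iota))$ with a product of CM-periods attached to CM-types built from $\iota$. This should be accessible via a Chowla--Selberg-type formula for Barnes' multiple gamma function, which expresses its values at CM-points as products of periods of abelian varieties with complex multiplication. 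Invoking Yoshida's conjecture on CM-periods (and the many known cases thereof) would then yield algebraicity of each of these $[F:\mathbb Q] - 1$ factors, so that
\begin{equation*}
\exp(\zeta_S'(0,\tau)) \equiv \exp(X(c,\iota_0)) \pmod{\overline{\mathbb Q}^\times}.
\end{equation*}

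The hard part --- and the reason the paper's main theorem only addresses $[F:\mathbb Q]-1$ of the factors rather than all $[F:\mathbb Q]$ --- is the factor at the distinguished embedding $\iota_0$. Under the real embedding $K \subset \mathbb R$ there is no CM-period interpretation available for $\exp(X(c,\iota_0))$; in fact this factor carries precisely the transcendental content of the Stark unit itself, and so no purely analytic or period-theoretic manipulation will replace it by an algebraic number without already assuming Stark's conjecture. A complete proof of Conjecture \ref{wsc} along these lines would therefore require an additional input --- for instance, a functional equation or distribution relation pairing the $\iota_0$ contribution with the others, or a strengthening of Yoshida's CM-period conjecture that is genuinely stronger than what is encoded in the $[F:\mathbb Q]-1$ algebraic factors --- and this extra ingredient is what my sketch cannot supply.
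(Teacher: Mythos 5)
The statement you were asked to prove is labeled a \emph{conjecture} in the paper (Conjecture \ref{wsc}), and the paper does not prove it --- it is precisely the open problem around which the paper's Theorem \ref{main} and Propositions \ref{mainprp}, \ref{mainprp2} are organized. So there is no ``paper's own proof'' to compare against, and your concluding honest admission that a complete argument needs an input beyond Shintani--Yoshida decomposition and period theory is exactly the correct diagnosis: the factor at the distinguished embedding carries the Stark unit's full transcendental content.

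Within that caveat, two points in your sketch deserve correction. First, a single ray class $c$ does not correspond to $\tau$; the correct statement of Theorem \ref{candec} is $\exp(\zeta_S'(0,\tau)) = \prod_{\iota}\prod_{c \in \mathrm{Art}_{\mathfrak f_S}^{-1}(\tau)} \exp(X(c,\iota))$, with the inner product running over all ray classes in the fibre over $\tau$. Second --- and more substantively --- the algebraicity of the factors at $\iota \neq \mathrm{id}$ does \emph{not} rest on Yoshida's CM-period conjecture or on any Chowla--Selberg-type formula. That algebraicity is the unconditional content of the paper's Theorem \ref{main} (and Corollary \ref{maincor}): since $K$ has a real place at $\mathrm{id}$, the class $c_1$ acts on $\mathrm{Art}_{\mathfrak f_S}^{-1}(\tau)$, and Theorem \ref{main} shows $\exp(X(c,\iota_i))\cdot\exp(X(c_1 c,\iota_i)) \in \iota_i(\mathcal O^\times_+)^{\mathbb Q}$ whenever $i \neq 1$, which is proved by a purely combinatorial argument with Shintani cones and Bernoulli-polynomial identities (Lemmas \ref{keylemma}, \ref{welldef}, \ref{vtoav}, \ref{replace}--\ref{replace3}), with no input from Hodge theory or abelian varieties. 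Yoshida's conjecture on CM-periods enters only later, in Proposition \ref{mainprp}, as a \emph{conditional} route to handling the remaining $\iota = \mathrm{id}$ factor --- the one your sketch correctly identifies as out of reach by analytic means alone. In short: your structural picture is right, your identification of the obstruction is right, but the $(n-1)$ ``easy'' factors are handled unconditionally and algebraically in the paper, not conjecturally via period theory.
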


Let $K$ be a finite abelian extension of a totally real field $F$, and $S,\tau \in G$ as above.
In particular, we do not assume that $K$ is not totally complex.
In \cite{Yo}, 
Yoshida decomposed $\exp (\zeta_S'(0,\tau))$ into the following form.
For an integral divisor $\mathfrak f$ of $F$, we denote by $C_\mathfrak f$ the ray class group modulo $\mathfrak f$.
Let $\mathfrak f_{K/F}$ be the conductor of $K/F$ and $\mathrm{Art} \colon C_{\mathfrak f_{K/F}} \rightarrow G$ the Artin map.
For an integral divisor $\mathfrak f$ with $\mathfrak f_{K/F} | \mathfrak f$, 
we denote the composite map $C_\mathfrak f \rightarrow C_{\mathfrak f_{K/F}} \stackrel{\mathrm{Art}}\rightarrow G$ by $\mathrm{Art}_\mathfrak f$.
Then we obtain (Theorem \ref{candec})
\begin{equation*}
\exp (\zeta_S'(0,\tau)) =\prod_{\iota \colon F \hookrightarrow \mathbb R} \prod_{c \in \mathrm{Art}_{\mathfrak f_S}^{-1}(\tau)} \exp(X(c,\iota)),
\end{equation*}
if we take 
\begin{equation} \label{Stof}
\mathfrak f_S:=\mathfrak f_{K/F} \times \text{``the product of all places $\lambda \in S$ with $\lambda \nmid \mathfrak f_{K/F}$''}.
\end{equation}
Here $\iota$ runs over all real embeddings of $F$, 
and $X(c,\iota)$ is Yoshida's class invariant defined in \S \ref{dfnofX}, in terms of Barnes' multiple gamma function and Shintani's cone decomposition.
The main result (Theorem \ref{main}) in this paper states that under the assumption of Conjecture \ref{wsc}, 
there exist a totally positive unit $\epsilon \in \mathcal O_F^\times$ and a natural number $m$ satisfying 
\begin{center}
$\displaystyle \prod_{c \in \mathrm{Art}_{\mathfrak f_S}^{-1}(\tau)} \exp(X(c,\iota)) =\iota(\epsilon)^\frac{1}{m}$ whenever $\iota\neq \mathrm{id}$.
\end{center}
As a direct application, we can refine the statement of Conjecture \ref{wsc} as follows:
\begin{equation*} 
\prod_{c \in \mathrm{Art}_{\mathfrak f_S}^{-1}(\tau)} \exp(X(c,\mathrm{id})) \in \left(\mathcal O_K^\times \cap \mathbb R_+ \right)^\mathbb Q.
\end{equation*}
Moreover, the main result in this paper has the following significance:
In \S 5, we will discuss a relation between 
the algebraicity of Stark units, monomial relations among CM-periods,
and Yoshida's conjecture (Conjecture \ref{YC}) on Shimura's period symbol.
Theorem \ref{main} and its Corollary are necessary for this discussion (to be precise, for the proofs of
Propositions \ref{trans}, \ref{mainprp}, \ref{mainprp2}).

Let us explain the outline of this paper. 
In \S 2, we introduce Yoshida's technique for decomposing $\zeta_S'(0,\tau)$:
Associated with a ray class $c$ and a real embedding $\iota$ of $F$, 
we define the invariant $X(c,\iota)$ in terms of Barnes' multiple gamma function and Shintani's cone decomposition.
Then we state a modified version of Shintani's formula (Theorem \ref{candec}) which gives the canonical decomposition of $\zeta_S'(0,\tau)$.
In \S 3, we state our main result (Theorem \ref{main}) and provide the proof.
When $[F:\mathbb Q]=2$, this result is due to Yoshida, and our proof is a generalization of his proof.
One of the new ideas of this paper is to define ``formal multiple zeta values'' in (\ref{amzv}), 
which we need since we treat the case when usual multiple zeta function does not converges (e.g., in Definition \ref{defofvab}).
In \S 4, we give the proof of Lemma \ref{keylemma} which states that, roughly speaking, we can take a suitable cone decomposition for computing $X(c,\iota)$'s.
This Lemma is a key step in the proof of Theorem \ref{main}.
As an application, in \S 5, we study a relation between Yoshida's conjecture on CM-periods and Stark's conjecture:
Yoshida formulated a Conjecture (Conjecture \ref{YC}) which expresses Shimura's period symbol in terms of $\exp(X(c,\iota))$'s.
By using Theorem \ref{main}, we can reformulate this Conjecture to the form (\ref{equiv}).
We note that (\ref{equiv}) has a natural generalization (Conjecture \ref{maincnj}) which also implies a part of Stark's conjecture.
Furthermore, we give some evidence for Conjecture \ref{maincnj}.

\section{Yoshida's \mbox{\boldmath $X$}-invariant.} \label{dfnofX}

In this section, we introduce Yoshida's $X$-invariant, which is defined as the sum of three invariants $G,W,V$.
Barnes' multiple zeta function $\zeta(s,\mathbf a,z)$ for $z\in \mathbb R_+$, $\mathbf a=(a_1,\dots,a_r) \in \mathbb R_+^r$ is defined by
\begin{equation*} 
\zeta(s,\mathbf a,z):=\sum_{0\leq m_1,m_2,\dots,m_r \in \mathbb Z}(z+m_1a_1+\dots+m_ra_r)^{-s}.
\end{equation*}
This series converges for $\mathrm{Re}(s)>r$, has a meromorphic continuation to $\mathbb C$, which is analytic at $s=0$.

\begin{dfn}
We define the multiple gamma function as
\begin{equation*}
\Gamma(z,\mathbf a):=\exp (\zeta'(0,\mathbf a,z)) \qquad(z\in \mathbb R_+,\ \mathbf a \in \mathbb R_+^r).
\end{equation*}
We note that this definition is slightly different from 
Barnes' definition:
$\exp (\zeta'(0,\mathbf a,z)) =\Gamma_r(z,\mathbf a)/\rho_r(\mathbf a)$ with a correction term $\rho_r(\mathbf a)$.
\end{dfn}

Let $F$ be a totally real field of degree $n$, $\mathcal O$ its ring of integers, and $\infty_1,\dots,\infty_n$ all the infinite places of $F$.
We denote the set of all totally positive elements in $F,\mathcal O,\mathcal O^\times$ by $F_+,\mathcal O_+,\mathcal O^\times_+$ respectively.
Let $\mathfrak f$ be an integral divisor of $F$ of the form 
\begin{equation*}
\mathfrak f=\mathfrak m\infty_1\dotsm\infty_n
\end{equation*}
with $\mathfrak m$ an integral ideal.
We denote the ray class group modulo $\mathfrak f$ by $C_\mathfrak f$.
The partial zeta function $\zeta(s,c)$ associated with $c\in C_\mathfrak f$ is defined by
\begin{equation*}
\zeta(s,c):=\sum_{\mathfrak a\subset \mathcal O,\, \mathfrak a \in c}N \mathfrak a ^{-s},
\end{equation*}
where $\mathfrak a$ runs over all integral ideals in the ray class $c$.
This series also has a meromorphic continuation to $\mathbb C$, which is analytic at $s=0$.
When $K/F$ is an abelian extension, taking $S,\mathfrak f_S$ as in (\ref{Stof}), 
we can write for $\tau\in \mathrm{Gal}(K/F)$
\begin{equation*}
\zeta_S(s,\tau)=\sum_{c\in \mathrm{Art}_{\mathfrak f_S}^{-1}(\tau)}\zeta(s,c).
\end{equation*}

We identify $F\otimes \mathbb R = \mathbb R^n$ by $z\otimes \alpha \mapsto (\alpha\iota_k(z))_{k}$,
where $\iota_1,\dots,\iota_n$ are all real embeddings of $F$.
We consider a ``cone decomposition'' of a subset of $F\otimes \mathbb R$ in the following sense.

\begin{dfn}
Assume that $v_1,\dots,v_r\in \mathcal O$ are linearly independent in $F\otimes \mathbb R$ over $\mathbb R$.
We define an ($r$-dimensional open simplicial) cone $C(\mathbf v)$ with the basis $\mathbf v:=(v_1\dots,v_r)$ by
\begin{equation*}
C(\mathbf v)=C(v_1,\dots,v_r):=\left\{\sum_{i=1}^r x_iv_i \in F\otimes \mathbb R \mid x_1,x_2,\dots,x_r \in \mathbb R_+ \right\}.
\end{equation*}
We note that whenever we consider a cone $C(v_1,\dots,v_r)$ in this paper, we assume that $v_1,\dots,v_r$ are in $\mathcal O$ and linearly independent.
\end{dfn}

We put $F\otimes \mathbb R_{n+}:=\mathbb R_+^n=\{(x_1,\dots,x_n) \in \mathbb R^n \mid x_1,\dots,x_n>0\}$.
Let $D$ be a subset of $F\otimes \mathbb R$.
Throughout this section, we always assume that 
\begin{itemize}
\item $D$ has a cone decomposition of the form $D=\coprod_{j \in J}C(\mathbf v_j)$, 
where the symbol $\coprod$ denotes the disjoint union, 
$J$ is a finite set of indices, 
and $C(\mathbf v_j)$ is a cone with the basis $\mathbf v_j=(v_{j1},\dots,v_{jr(j)}) \in \mathcal O^{r(j)}$ ($j \in J$, $1 \leq r(j) \leq n$).
\item $D\subset F\otimes \mathbb R_{n+}$. Namely, all of the above $v_{ji}$'s are totally positive.
\end{itemize}
We note that we will relax the second condition for $D$ from the next section.

For each $c \in C_\mathfrak f$, we fix an integral ideal ${\mathfrak a}_c$ satisfying that 
${\mathfrak a}_c\mathfrak f$ and $c$ belong to the same narrow ideal class (in $C_{(1)\infty_1\dotsm\infty_n}$).
For $j \in J$ we put
\begin{equation*}
R(c,{\mathfrak a}_c,\mathbf v_j)
:=\left \{\mathbf x=(x_1,\dots,x_{r(j)}) \in (\mathbb Q \cap (0,1])^{r(j)} \mid z:=\sum_{i=1}^{r(j)} x_i v_{ji} \in ({\mathfrak a}_c \mathfrak f)^{-1},\, 
z{\mathfrak a}_c \mathfrak f \in c\right\}.
\end{equation*}
We see that $R(c,{\mathfrak a}_c,\mathbf v_j)$ is a finite set (or the empty set)
since the condition $\sum_{i=1}^{r(j)} x_i v_{ji} \in ({\mathfrak a}_c \mathfrak f)^{-1}$ implies that the denominators of $x_i$ are bounded.
Moreover, we can write \cite[Chapter II, Lemma 3.2]{Yo}
\begin{equation} \label{basic}
\begin{split}
&\left \{z \in ({\mathfrak a}_c \mathfrak f)^{-1} \cap C(\mathbf v_j),\, z{\mathfrak a}_c \mathfrak f \in c\right\} \\
&=\coprod_{\mathbf x \in R(c,{\mathfrak a}_c,\mathbf v_j)}\left\{(x_1+m_1)v_{j1}+\dots + (x_{r(j)}+m_{r(j)})v_{jr(j)} \mid 0\leq m_1,\dots,m_{r(j)} \in \mathbb Z\right\}.
\end{split}
\end{equation}
Hence for any real embedding $\iota$ of $F$, we obtain
\begin{equation*}
\sum_{z \in ({\mathfrak a}_c \mathfrak f)^{-1}\cap D,\, (z){\mathfrak a}_c \mathfrak f \in c }\iota(z)^{-s}
=\sum_{j \in J}\sum_{\mathbf x \in R(c,{\mathfrak a}_c,\mathbf v_j)}\zeta(s,\iota(\mathbf v_j),\iota(\mathbf x\,{}^t\mathbf v_j)).
\end{equation*}
In particular, this series has a meromorphic continuation to $\mathbb C$, which is analytic at $s=0$.
Then the $G$-invariant \cite[Chapter III, (3.6), (3.28)]{Yo} is defined by
\begin{equation*}
G(c,\iota;D,{\mathfrak a}_c):=\left[\frac{d}{ds}\sum_{z \in ({\mathfrak a}_c \mathfrak f)^{-1}\cap D,\, (z){\mathfrak a}_c \mathfrak f \in c }\iota(z)^{-s} \right]_{s=0}
=\sum_{j \in J}\sum_{\mathbf x \in R(c,{\mathfrak a}_c,\mathbf v_j)}\log \Gamma(\iota(\mathbf x\,{}^t\mathbf v_j),\iota(\mathbf v_j)).
\end{equation*}
In \cite[Chapter III, (3.8), (3.31)]{Yo}, the $W$-invariant was defined by
\begin{equation} \label{worig}
W(c,\iota;D,{\mathfrak a}_c):=-\frac{1}{n}\log N{\mathfrak a}_c \mathfrak f \cdot \left[\sum_{z \in ({\mathfrak a}_c \mathfrak f)^{-1}\cap D,\, 
(z){\mathfrak a}_c \mathfrak f \in c }\iota(z)^{-s} \right]_{s=0}.
\end{equation}
In the present paper, we slightly modify this definition.
Let $\mathrm{FI}_F$ be the group of all fractional ideals of $F$.
We define a group homomorphism $\log_\iota \colon \mathrm{FI}_F \rightarrow \mathbb R$ for each real embedding $\iota$ of $F$ in the following manner.
For each prime ideal $\mathfrak p$, we choose $\pi_\mathfrak p \in \mathcal O_+$ satisfying $\mathfrak p^{h_F^+}=(\pi_\mathfrak p)$,
where $h_F^+$ is the narrow class number.
Then we put $\log_\iota \mathfrak p:=\frac{1}{h_F}\log \iota(\pi_\mathfrak p)$ for every $\iota$, and extend this linearly 
to $\log_\iota \colon \mathrm{FI}_F \rightarrow \mathbb R$. 
We easily see the following (\ref{ww}), (\ref{lia}). For $\mathfrak a \in \mathrm{FI}_F$, we have
\begin{equation} \label{ww} 
\log N\mathfrak a =\sum_{\iota \colon F \hookrightarrow \mathbb R} \log_\iota \mathfrak a.
\end{equation}
For a principal ideal $(\alpha)$ with $\alpha \in F^\times$, 
there exist $\epsilon \in \mathcal O^\times_+$, $m \in \mathbb N$ satisfying 
\begin{equation} \label{lia}
\log_\iota (\alpha)= \log |\iota(\alpha)| +\frac{1}{m}\log \iota (\epsilon)
\end{equation}
for all real embeddings $\iota$ of $F$.
Then we define the $W$-invariant by
\begin{equation*}
\begin{split}
W(c,\iota;D,{\mathfrak a}_c)&:=-\log_\iota {\mathfrak a}_c \mathfrak f \cdot \left[\sum_{z \in ({\mathfrak a}_c \mathfrak f)^{-1}\cap D,\, 
(z){\mathfrak a}_c \mathfrak f \in c }\iota(z)^{-s} \right]_{s=0} \\
&=-\log_\iota {\mathfrak a}_c \mathfrak f \cdot \sum_{j \in J}\sum_{\mathbf x \in R(c,{\mathfrak a}_c,\mathbf v_j)} 
\zeta(0,\iota(\mathbf v_j),\iota(\mathbf x\,{}^t\mathbf v_j)).
\end{split}
\end{equation*}
Finally, the $V$-invariant \cite[Chapter III, (3.7), (3.29)]{Yo} is defined by
\begin{equation*}
\begin{split}
V(c,\iota;D,{\mathfrak a}_c)&:=\frac{2}{n}\sum_{k=2}^n v_{1,k} -\frac{2}{n^2}\sum_{1\leq i < k \leq n} v_{i,k}, \\
v_{i,k}&:=\left[\frac{d}{ds}\sum_{z \in ({\mathfrak a}_c \mathfrak f)^{-1} \cap D,\,(z){\mathfrak a}_c \mathfrak f \in c} 
\left((\iota_i(z)\iota_k(z))^{-s} -\iota_i(z)^{-s}-\iota_k(z)^{-s} \right)\right]_{s=0},
\end{split}
\end{equation*}
where we put $\{\iota_1:=\iota,\iota_2,\dots,\iota_n\}$ to be all real embeddings of $F$.
We define
\begin{equation*}
\begin{split}
X(c,\iota;D,{\mathfrak a}_c):=G(c,\iota;D,{\mathfrak a}_c)+W(c,\iota;D,{\mathfrak a}_c)+V(c,\iota;D,{\mathfrak a}_c).
\end{split}
\end{equation*}

We consider fundamental domains of the following form, for the natural action $\epsilon (z \otimes \alpha):=(\epsilon z) \otimes \alpha$ 
of $\epsilon \in \mathcal O^\times_+$ on $z \otimes \alpha \in F\otimes \mathbb R_{n+}$.

\begin{dfn}
We say that a subset $D \subset F\otimes \mathbb R_{n+}$ is a Shintani domain if and only if we can write
\begin{equation*}
D =\coprod_{j\in J}C(\mathbf v_j), \quad F\otimes \mathbb R_{n+}=\coprod_{\epsilon \in \mathcal O^\times_+}\epsilon D
\end{equation*}
with a finite number of cones $C(\mathbf v_j)$ 
($j\in J$, $|J|<\infty$, $\mathbf v_j \in \mathcal O_+^{r(j)}$, $1\leq r(j) \leq n$).
\end{dfn}

Shintani showed that there exists a Shintani domain for any $F$ (\cite[Proposition 4]{Shin1}). 
If $D$ is a Shintani domain, and if $D,{\mathfrak a}_c$ are fixed, then $X(c,\iota;D,{\mathfrak a}_c)$ is also written as $X(c,\iota)$.
For later use, we introduce the following. When $D$ is a Shintani domain, by \cite[Chapter IV, Corollary 6.3-2]{Yo}, we have
\begin{equation} \label{=zeta}
\left[\sum_{z \in ({\mathfrak a}_c \mathfrak f)^{-1}\cap D,\, (z){\mathfrak a}_c \mathfrak f \in c }\iota(z)^{-s} \right]_{s=0}=\zeta(0,c) \in \mathbb Q.
\end{equation}
For a proof of the last part ($\zeta(0,c) \in \mathbb Q$), which seems to be well-known to experts, see e.g.\ \cite[Chapter II, Theorem 3.3]{Yo}.

The following Lemma, which is a part of Lemma \ref{replace2}, explains the reason why we modified the $W$-invariant in this paper:
If we replace $\iota (\mathcal O^\times_+)^\mathbb Q$ in the statement by $\iota (F_+)^\mathbb Q$, 
then it was proved by Yoshida \cite[Chapter III, \S 3.6, \S 3.7]{Yo} for the original $W$-invariant.

\begin{lmm}
$\exp(X(c,\iota)) \bmod \iota (\mathcal O^\times_+)^\mathbb Q$ does not depend on the choices of a Shintani domain $D$ 
and an integral ideal ${\mathfrak a}_c$. 
\end{lmm}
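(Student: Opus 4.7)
The plan is to reduce the statement to the weaker independence modulo $\iota(F_+)^{\mathbb Q}$ already established by Yoshida and show that the modification of the $W$-invariant upgrades the $F_+$-ambiguity to an $\mathcal O_+^\times$-ambiguity. Writing $X_{\mathrm{orig}}$ for Yoshida's original class invariant (with $W_{\mathrm{orig}}=-\tfrac{1}{n}\log N\mathfrak a_c\mathfrak f\cdot\zeta(0,c)$), equation (\ref{=zeta}) on a Shintani domain $D$ yields
\begin{equation*}
X(c,\iota;D,\mathfrak a_c) = X_{\mathrm{orig}}(c,\iota;D,\mathfrak a_c) + \bigl(\tfrac{1}{n}\log N\mathfrak a_c\mathfrak f - \log_\iota\mathfrak a_c\mathfrak f\bigr)\zeta(0,c).
\end{equation*}
Fix two choices $(D,\mathfrak a_c)$ and $(D',\mathfrak a'_c)$, write $\mathfrak a'_c = (\beta)\mathfrak a_c$ with $\beta\in F_+^\times$ (possible since $\mathfrak a_c\mathfrak f$ and $\mathfrak a'_c\mathfrak f$ lie in the same narrow class as $c$), and treat the $D$- and $\mathfrak a_c$-changes separately.

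For the $D$-change with $\mathfrak a_c$ fixed, the correction term is $D$-independent (it only involves $\mathfrak a_c$ and $\zeta(0,c)$, which is intrinsic by (\ref{=zeta})); since any two Shintani tilings are identified by translations in $\mathcal O_+^\times$, Yoshida's argument gives the change in $X_{\mathrm{orig}}$ as a $\mathbb Q$-linear combination of $\log\iota(\epsilon)$ with $\epsilon\in\mathcal O_+^\times$. For the $\mathfrak a_c$-change with $D$ fixed, I retrace Yoshida's calculation via the substitution $z = \beta^{-1}w$, which identifies the summation domain for $\mathfrak a'_c$ with $(\mathfrak a_c\mathfrak f)^{-1}\cap\beta D$ and extracts a factor $\iota(\beta)^s$. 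Tracking this through the $G$-, $W_{\mathrm{orig}}$- and $V$-contributions gives, modulo $\log\iota(\mathcal O_+^\times)^{\mathbb Q}$,
\begin{equation*}
X_{\mathrm{orig}}(c,\iota;D,(\beta)\mathfrak a_c) - X_{\mathrm{orig}}(c,\iota;D,\mathfrak a_c) \equiv \zeta(0,c)\bigl(\log|\iota(\beta)| - \tfrac{1}{n}\log N(\beta)\bigr);
\end{equation*}
the shape of the right-hand side is also forced by the canonical identity $\sum_\iota X_{\mathrm{orig}}(c,\iota) = \zeta'(0,c)$, which makes the $\iota$-sum of any ambiguity vanish.

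Adding the change in the correction term, $\zeta(0,c)\bigl(\tfrac{1}{n}\log N(\beta) - \log_\iota(\beta)\bigr)$, the $\log N(\beta)$ parts cancel and the total change in $X$ reduces to $\zeta(0,c)\bigl(\log|\iota(\beta)| - \log_\iota(\beta)\bigr)$. Applying (\ref{lia}) to $\beta\in F_+^\times$ gives $\log|\iota(\beta)| - \log_\iota(\beta) = -\tfrac{1}{m}\log\iota(\epsilon)$ for some $\epsilon\in\mathcal O_+^\times$ and $m\in\mathbb N$; together with $\zeta(0,c)\in\mathbb Q$, this places the total change in $\log\iota(\mathcal O_+^\times)^{\mathbb Q}$, as required. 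The main obstacle is justifying the explicit shape of the $\mathfrak a_c$-change of $X_{\mathrm{orig}}$ displayed above: while it is consistent with Yoshida's $F_+$-ambiguity and with the canonical sum identity, pinning down the coefficients demands a careful recomputation of the $G$- and $V$-invariants under the scaling $z\mapsto\beta^{-1}z$, tracking the singular behaviour of the multiple zeta functions at $s=0$ and identifying which principal-ideal factors are absorbed into units.
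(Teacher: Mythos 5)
Your strategy is essentially the paper's (Lemma \ref{replace2}): express $X$ as Yoshida's $X_{\mathrm{orig}}$ plus the correction from the modified $W$-invariant, then show that the combined change under replacing $(D,\mathfrak a_c)$ lands in $\log\iota(\mathcal O_+^\times)^{\mathbb Q}$, and invoke~(\ref{lia}) at the end. The organizational difference --- you split into a $D$-change with $\mathfrak a_c$ fixed and an $\mathfrak a_c$-change with $D$ fixed, whereas the paper changes $(D,\mathfrak a_c)\to(\alpha^{-1}D,(\alpha)\mathfrak a_c)$ simultaneously and then handles the remaining $D$-change by matching cones via units --- is immaterial: your $\mathfrak a_c$-change-with-$D$-fixed unpacks (via $z\mapsto\beta^{-1}z$) into the paper's simultaneous change plus a $D$-change from $\beta^{-1}D$ to $D$, so the two routes meet.

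The genuine gap is the one you flagged yourself: the displayed formula
\begin{equation*}
X_{\mathrm{orig}}(c,\iota;D,(\beta)\mathfrak a_c) - X_{\mathrm{orig}}(c,\iota;D,\mathfrak a_c) \equiv \zeta(0,c)\bigl(\log|\iota(\beta)| - \tfrac{1}{n}\log N(\beta)\bigr) \pmod{\log\iota(\mathcal O_+^\times)^{\mathbb Q}}
\end{equation*}
is asserted, not derived. The hard part is the $V$-invariant: the paper gets the transformation law
$V(c,\iota;\alpha^{-1}D,(\alpha)\mathfrak a_c) - V(c,\iota;D,\mathfrak a_c) = \bigl(\tfrac{1}{n}\log N(\alpha) - \log\iota(\alpha)\bigr)\bigl(Z - \tfrac{1}{n}\mathrm{Tr}_{\iota(F)/\mathbb Q}Z\bigr)$
by citing Yoshida's \cite[Chapter III, (3.44)]{Yo} (the companion formal-zeta version in this paper is Lemma \ref{vtoav}, which in turn hinges on the nontrivial polynomial identity of Proposition \ref{prp}). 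Your fallback argument that the shape is ``forced by the canonical identity $\sum_\iota X_{\mathrm{orig}}(c,\iota) = \zeta'(0,c)$'' does not close this: knowing that each per-$\iota$ ambiguity lies in $\log\iota(F_+)^{\mathbb Q}$ and that the $\iota$-sum of the ambiguities vanishes is a system of $n$-many real numbers constrained by a single linear relation, which is far from determining the individual coefficients, let alone pinning them to $\zeta(0,c)\bigl(\log\iota(\beta)-\tfrac{1}{n}\log N(\beta)\bigr)$. To complete the proof you must actually carry out, or cite, the $V$-invariant scaling computation; once you have it, the rest of your argument (cancellation of the $\tfrac{1}{n}\log N(\beta)$ terms against the correction-term change, then (\ref{lia})) is exactly the paper's conclusion.
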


Now we state a modified version of Shintani's formula:
Shintani \cite{Shin2} expressed $\zeta'(0,c)$ in terms of $\log$ of Barnes' multiple gamma function, with certain correction terms.
Yoshida found a nice decomposition of the correction terms, which can be written as follows.

\begin{thm}[{\cite[Chapter III, (3.11)]{Yo}}] \label{candec}
Let $c, \mathfrak a_c$ be as above. Assume that $D$ is a Shintani domain. Then we have
\begin{equation} \label{mvsf}
\zeta'(0,c)=\sum_{\iota\colon F \hookrightarrow \mathbb R}X(c,\iota;D,\mathfrak a_c).
\end{equation}
Here $\iota$ runs over all real embeddings of $F$.
\end{thm}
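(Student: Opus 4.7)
The plan is to reduce the statement to Yoshida's original formula \cite[Chapter III, (3.11)]{Yo}, which asserts the very same identity but with the original $W$-invariant $W^{\mathrm{orig}}$ of (\ref{worig}) in place of the modified $W$ used in this paper. Since the invariants $G$ and $V$ defined above coincide with Yoshida's, it is enough to verify
$\sum_{\iota} W(c,\iota;D,\mathfrak{a}_c) = \sum_{\iota} W^{\mathrm{orig}}(c,\iota;D,\mathfrak{a}_c)$,
after which the theorem follows immediately.

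For this verification I would first invoke (\ref{=zeta}) to observe that the bracket $\left[\sum_{z}\iota(z)^{-s}\right]_{s=0}$ appearing in both definitions equals $\zeta(0,c)$ and does not depend on the choice of real embedding. Summing the modified $W$ over the $n$ real embeddings of $F$ then gives $-\zeta(0,c)\sum_{\iota}\log_\iota\mathfrak{a}_c\mathfrak{f}$, which by (\ref{ww}) collapses to $-\zeta(0,c)\log N\mathfrak{a}_c\mathfrak{f}$. On the other hand $W^{\mathrm{orig}}(c,\iota;D,\mathfrak{a}_c) = -\tfrac{1}{n}\log N\mathfrak{a}_c\mathfrak{f}\cdot\zeta(0,c)$ is already independent of $\iota$, so summing over the $n$ embeddings produces the same total. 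This short calculation completes the reduction.

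The substantive content sits inside the cited \cite[Chapter III, (3.11)]{Yo}, which itself is a reorganization of Shintani's identity in \cite{Shin2}. Were one to reprove it from scratch, the natural starting point would be the expression $\zeta(s,c) = N(\mathfrak{a}_c\mathfrak{f})^{-s}\sum_{j\in J}\sum_{\mathbf{x}\in R(c,\mathfrak{a}_c,\mathbf{v}_j)}\zeta_F(s,\mathbf{x}\,{}^t\mathbf{v}_j,\mathbf{v}_j)$ obtained from (\ref{basic}), where $\zeta_F(s,z,\mathbf{a}) := \sum_{\mathbf{m}\ge 0} N_{F/\mathbb{Q}}(z+\mathbf{m}\cdot\mathbf{a})^{-s}$ is the ``diagonal'' Shintani zeta. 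Differentiating at $s=0$ peels off the factor $-\log N(\mathfrak{a}_c\mathfrak{f})\cdot\zeta(0,c)$ that matches $\sum_\iota W^{\mathrm{orig}}$, and leaves behind the $s$-derivative of $\zeta_F$, which must be expanded into single-embedding Barnes zeta derivatives (producing the $G$-sum $\sum_\iota\log\Gamma(\iota(\mathbf{x}\,{}^t\mathbf{v}_j),\iota\mathbf{v}_j)$) together with pairwise cross-terms of the type $v_{i,k}$.

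The main obstacle in such a from-scratch argument is the combinatorial rearrangement of these cross-terms: the specific weights $\tfrac{2}{n}$ and $-\tfrac{2}{n^2}$ in the definition of $V$ are chosen precisely so that summation over $\iota$ cancels the overcounting of each unordered pair $\{\iota_i,\iota_k\}$ and reproduces exactly the analytic discrepancy between $\prod_\iota \iota(z_\mathbf{m})^{-s}$ and $\sum_\iota\iota(z_\mathbf{m})^{-s}$ at $s=0$; confirming this matching is the heart of Yoshida's Chapter III argument. Since the present result is stated as a \emph{modified version} of Shintani's formula, however, only the two-line identification $\sum_\iota W = \sum_\iota W^{\mathrm{orig}}$ described above is actually required here.
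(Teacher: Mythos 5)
Your proposal is correct and follows exactly the paper's proof: reduce to Yoshida's original formula for the unmodified $W$-invariant, then use (\ref{=zeta}) to see that the bracketed factor equals $\zeta(0,c)$ independently of $\iota$, and (\ref{ww}) to conclude $\sum_\iota W = \sum_\iota W^{\mathrm{orig}}$. The additional sketch of how one would reprove Yoshida's formula from Shintani's identity is extra background, not needed for the stated modified version.
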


\begin{proof}
In \cite[Chapter III, (3.11)]{Yo}, the equation (\ref{mvsf}) was proved for the original $W$-invariant (\ref{worig}).
Therefore we need to show that $\sum_{\iota} W(c,\iota;D,\mathfrak a_c)$ does not change when we modify the $W$-invariant.
This follows from (\ref{ww}) since 
$\left[\sum_{z \in ({\mathfrak a}_c \mathfrak f)^{-1}\cap D,\, (z){\mathfrak a}_c \mathfrak f \in c }\iota(z)^{-s} \right]_{s=0}$ does not depend on $\iota$ 
by (\ref{=zeta}).
\end{proof}

\section{Monomial relations between \mbox{\boldmath $\exp (X(c,\iota))$}'s.}

Let $F$ be a totally real filed of degree $n$, 
and $\mathfrak f=\mathfrak m\infty_1\dots\infty_n$ an integral divisor of $F$, as in the previous section.
We denote the maximal ray class field modulo $\mathfrak f$ by $H_\mathfrak f$.
Then the Artin map gives rise to a canonical isomorphism $C_\mathfrak f \cong \mathrm{Gal}(H_\mathfrak f/F)$.
Let $\iota_1,\dots,\iota_n$ be all real embeddings of $F$.
We denote the complex conjugation in $\mathrm{Gal}(H_\mathfrak f/F)$ at $\iota_i$ by $\rho_i$, 
and the corresponding element in $C_\mathfrak f$ by $c_i$.
That is, taking a lift $\tilde \iota_i \colon H_\mathfrak f \rightarrow \mathbb C$ of $\iota_i \colon F \rightarrow \mathbb R$, 
we put $\rho_i:=\tilde \iota_i^{-1} \circ \rho \circ \tilde \iota_i$
where $\rho$ is the complex conjugation on $\mathbb C$.
The following is the main result in this paper. 
When $n=2$, this Theorem is due to Yoshida \cite[Chapter III, Theorems 5.8, 5.12]{Yo}. 

\begin{thm} \label{main}
Assume that $n \geq 2$. Then there exist $\epsilon \in \mathcal O^\times_+$, $m \in \mathbb N$ satisfying 
\begin{equation*}
\exp (X(c,\iota_i)) \cdot \exp (X(c_j c,\iota_i)) = \iota_i (\epsilon)^\frac{1}{m}\ \text{ whenever }\ i \neq j \qquad (1\leq i,j \leq n).
\end{equation*}
\end{thm}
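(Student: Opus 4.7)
The plan is to exploit Lemma \ref{replace2}, which states that $\exp(X(c,\iota_i))$ modulo $\iota_i(\mathcal O^\times_+)^{\mathbb Q}$ is independent of the Shintani domain $D$ and the integral ideal $\mathfrak a_c$, together with Lemma \ref{keylemma}, which permits computing the $X$-invariant via a suitable cone decomposition (in particular one not confined to $F\otimes \mathbb R_{n+}$). The guiding heuristic is that $c_j$ corresponds to the complex conjugation $\rho_j$ at $\iota_j$, so multiplication by $c_j$ should amount to a ``sign-flip at the $\iota_j$-coordinate,'' an operation that is invisible to $\iota_i$ when $i \neq j$.

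To make this precise, I would first produce $\alpha_j \in \mathcal O$ satisfying $\alpha_j \equiv 1 \pmod{\mathfrak m}$, $\iota_j(\alpha_j)<0$, and $\iota_k(\alpha_j)>0$ for $k \neq j$; such $\alpha_j$ exists by strong approximation, and $(\alpha_j)$ represents $c_j$ in $C_{\mathfrak f}$. Fixing $\mathfrak a_c$ and setting $\mathfrak a_{c_j c}:=\mathfrak a_c \cdot (\alpha_j)$, the map $z\mapsto \alpha_j^{-1}z$ induces a bijection between the $z$'s contributing to $X(c,\iota_i;D,\mathfrak a_c)$ and the $w$'s contributing to $X(c_j c,\iota_i;\alpha_j^{-1}D,\mathfrak a_{c_j c})$: $\iota_i(z)$ is rescaled by the positive factor $\iota_i(\alpha_j^{-1})$, while $\iota_j(z)$ is rescaled by the negative factor $\iota_j(\alpha_j^{-1})$. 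Lemma \ref{keylemma} is what justifies using the non-totally-positive domain $\alpha_j^{-1}D$ to compute $X(c_j c,\iota_i)$.

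I would then sum the two $X$-invariants contribution by contribution. The $G$-invariant at $\iota_i$ involves only $\iota_i$-coordinates; after the change of variable it differs from the original by a translation by $\log \iota_i(\alpha_j^{-1})$ per summand, which by (\ref{=zeta}) sums to $\zeta(0,c)\log \iota_i(\alpha_j^{-1})$, and (\ref{lia}) absorbs this into the $\iota_i(\mathcal O^\times_+)^{\mathbb Q}$ ambiguity. The $W$-invariant contributes a term of the same shape via (\ref{ww}) and (\ref{lia}). Among the $V$-invariant pieces $v_{i,k}$, those with $k \neq j$ see only rescalings by positive factors and combine into a log-of-totally-positive-unit contribution.

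The main obstacle is the $V$-term $v_{i,j}$: under the change of variable the factor $\iota_j(z)^{-s}$ becomes $(-1)^{-s}|\iota_j(\alpha_j^{-1})|^{-s}\iota_j(z)^{-s}$, and when this is paired against the original sum for $c$ the combined generating series lies outside the region of absolute convergence of Barnes' double zeta function. This is precisely the situation that the formal multiple zeta values of (\ref{amzv}) are designed to handle: together with the cone-decomposition flexibility of Lemma \ref{keylemma}, they yield a cancellation leaving only a rational multiple of $\log \iota_i(\epsilon)$ for some $\epsilon \in \mathcal O^\times_+$, which is the desired identity.
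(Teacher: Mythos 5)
Your heuristic is on target, and you have correctly identified the right raw materials ($\nu_j$ from Lemma \ref{cpxcnj}, the non-totally-positive domain, the formal zeta values, Lemma \ref{keylemma}); but the bijection $z\mapsto\alpha_j^{-1}z$ does not do what you want, and the argument as stated cannot close. Fixing $\mathfrak a_{c_jc}=(\alpha_j)\mathfrak a_c$, the map sends $\{z\in(\mathfrak a_c\mathfrak f)^{-1}\cap D: (z)\mathfrak a_c\mathfrak f\in c\}$ to $\{w\in(\mathfrak a_{c_jc}\mathfrak f)^{-1}\cap \alpha_j^{-1}D: (w)\mathfrak a_{c_jc}\mathfrak f\in c\}$: the target ray class is still $c$, \emph{not} $c_jc$. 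More seriously, once the domain leaves $F\otimes\mathbb R_{n+}$, the basic decomposition (\ref{basic}) fails, so a single class $c$ no longer admits a clean expansion in terms of Barnes zeta values over a cone $C(\mathbf v)\subset F\otimes\mathbb R_{(n-1)+}$: moving $z$ to $z\pm v_i$ inside such a cone can flip the sign at $\iota_n$ and thus toggle the ray class between $c$ and $c_nc$. The paper's way around this is to work with the \emph{combined} invariant $X_{\mathrm{fml}}(\{c,c_nc\},\iota;\cdot,\cdot)$, which groups $c$ and $c_nc$ together precisely so that the ``box sum'' structure is restored over cones in $F\otimes\mathbb R_{(n-1)+}$. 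Your proposal never introduces this object, which is the step that makes the non-totally-positive computations meaningful at all; Lemma \ref{keylemma} does not ``justify'' any single such computation, and so invoking it for that purpose is a category error.

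The other missing piece is the actual mechanism of cancellation, which you gesture at but do not supply. The paper's argument is a telescope driven by the multiset identity of Lemma \ref{keylemma}-(iii): $(D\coprod\nu D)\biguplus\bigl(\biguplus_t\epsilon_t X_t\bigr)=\biguplus_t X_t$, where the $X_t$ are cone-decomposable subsets of $F\otimes\mathbb R_{(n-1)+}$ and the $\epsilon_t$ are totally positive units. Since $X_{\mathrm{fml}}$ is additive over disjoint domains (\ref{disj}) and shifts only by $\frac{1}{m}\log\iota(\epsilon)$ under $D\mapsto\epsilon D$ (Lemma \ref{replace}-(iii)), this identity forces $X_{\mathrm{fml}}(\{c,c_nc\},\iota;D\coprod\nu D,\mathfrak a_c)\sim 0$. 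The remaining work is to \emph{separately} unwind $X_{\mathrm{fml}}(\{c,c_nc\},\iota;D\coprod\nu D,\mathfrak a_c)$ into $X(c,\iota;D,\mathfrak a_c)+X(c_nc,\iota;D,\mathfrak a_{c_nc})$ (up to $\sim$, and with a case split on whether $c=c_nc$ or $\mathcal O^\times\cap F\otimes\mathbb R_{(n-1)+}=\emptyset$), which uses Lemmas \ref{replace3}-(i),(ii), \ref{replace2}, (\ref{last}) and (\ref{lia}). Without the combined invariant and without the multiset identity as the engine, the ``cancellation leaving only a rational multiple of $\log\iota_i(\epsilon)$'' that your last paragraph asserts is not derived from anything; it is exactly the content of the theorem.
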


We prepare some Lemmas for the proof of Theorem \ref{main}.
The statement of Lemma \ref{cpxcnj} seems to be well-known to experts. For a proof, see \cite[Chapter III, the first paragraph of \S 5.1]{Yo}.

\begin{lmm} \label{cpxcnj}
For $1 \leq i \leq n$, take $\nu_i \in \mathcal O$ so that $\nu_i \equiv 1 \bmod \mathfrak m$, 
$\iota_i(\nu_i)<0$, $\iota_j(\nu_i)>0$ ($1\leq j \leq n$, $j\neq i$). 
Then we have $(\nu_i) \in c_i$.
\end{lmm}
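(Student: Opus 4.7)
The plan is to apply global reciprocity of class field theory to $\nu_i \in F^\times$, viewed as the diagonal principal idele in $\mathbb A_F^\times$. Since $F^\times$ lies in the kernel of the global Artin map $\mathbb A_F^\times \to \mathrm{Gal}(H_\mathfrak f/F)$, the product of the local Artin symbols of $\nu_i$ over all places must equal the identity, and the lemma will follow from computing each local contribution and reading off what remains.

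I would partition the places into three groups. At a finite place $v$ with $v \nmid \mathfrak m (\nu_i)$, the element $\nu_i$ is a local unit and $v$ is unramified in $H_\mathfrak f$, so the local symbol is trivial. At a finite place $v \mid \mathfrak m$, the hypothesis $\nu_i \equiv 1 \bmod \mathfrak m$ puts $\nu_i$ in $1 + \mathfrak m \mathcal O_v$; since the conductor of $H_\mathfrak f/F$ divides $\mathfrak f$, this principal unit group is contained in the kernel of the local Artin map, giving again the identity. At a finite place $v \mid (\nu_i)$, which is necessarily coprime to $\mathfrak m$, the local symbol is $\mathrm{Frob}_v^{v(\nu_i)}|_{H_\mathfrak f}$, and the product of these contributions over all such $v$ reassembles exactly the global Artin symbol $\mathrm{Art}((\nu_i))$ of the ideal $(\nu_i)$.

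At the infinite places, local class field theory identifies $\mathrm{Art}_{\infty_k} \colon \mathbb R^\times \to \mathrm{Gal}(\mathbb C/\mathbb R)$ with the sign character, so $\iota_k(\nu_i) > 0$ contributes the identity for $k \neq i$, while $\iota_i(\nu_i) < 0$ contributes $\rho_i$. Multiplying all contributions yields
\begin{equation*}
\mathrm{Art}((\nu_i)) \cdot \rho_i = 1 \quad \text{in } \mathrm{Gal}(H_\mathfrak f/F),
\end{equation*}
and since $\rho_i$ has order at most two, $\mathrm{Art}((\nu_i)) = \rho_i$. Under the isomorphism $C_\mathfrak f \cong \mathrm{Gal}(H_\mathfrak f/F)$, this translates directly to $(\nu_i) \in c_i$, as claimed.

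There is no substantial obstacle here; the whole argument is a bookkeeping application of idelic reciprocity. The only point requiring care is to match the normalization of the local Artin symbol at an infinite place (positive $\mapsto$ identity, negative $\mapsto$ complex conjugation) with the paper's definition of $\rho_i$ via a chosen lift $\tilde \iota_i$ of $\iota_i$, and this compatibility is standard and independent of the lift because $\mathrm{Gal}(H_\mathfrak f/F)$ is abelian.
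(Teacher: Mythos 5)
Your argument is correct and complete. The paper does not give its own proof of this lemma but defers to \cite[Chapter~III, \S 5.1]{Yo}; the idelic reciprocity computation you carry out (diagonal principal idele lies in the kernel, local contributions vanish at $v\mid\mathfrak m$ because $\nu_i\in 1+\mathfrak m\mathcal O_v$ and the conductor divides $\mathfrak f$, vanish at unramified unit places, reassemble into $\mathrm{Art}((\nu_i))$ at the finite support of $(\nu_i)$, and give $\rho_i$ at $\infty_i$ by the sign character) is the standard one and is exactly what the cited reference uses. One minor remark: the sign-normalization ambiguity for the local Frobenii is harmless here since $\rho_i$ has order dividing two, so $\mathrm{Art}((\nu_i))\rho_i^{\pm 1}=1$ gives $\mathrm{Art}((\nu_i))=\rho_i$ either way; your argument quietly benefits from this but does not need to flag it.
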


We fix a numbering of real embeddings $\iota_1,\dots,\iota_n$ of $F$, and consider the following domain in $F\otimes \mathbb R=\mathbb R^n$:
\begin{equation*}
F\otimes \mathbb R_{(n-1)+}:=\mathbb R_+^{n-1}\times \mathbb R=\{(x_1,\dots,x_n) \in \mathbb R^n \mid x_1,\dots,x_{n-1}>0\}.
\end{equation*}
In the following, we consider (disjoint unions of) cones contained in $F\otimes \mathbb R_{(n-1)+}$ (not necessarily in $F\otimes \mathbb R_{n+}$).

\begin{lmm} \label{keylemma}
There exist a Shintani domain $D$, an element $\nu \in F$, 
subsets $X_t \subset F \otimes \mathbb R_{(n-1)+}$, and elements $\epsilon_t \in \mathcal O^\times_+$ ($t \in T$, $T$ is a finite set of indices) satisfying that 
\begin{enumerate}
\item Each $X_t$ has a cone decomposition (i.e., can be expressed as a disjoint union of a finite number of cones).
\item $\nu \in F \otimes \mathbb R_{(n-1)+}$ (i.e., $\iota_1(\nu),\dots,\iota_{n-1}(\nu)>0$, $\iota_n(\nu)<0$).
\item  We have the following equality of multisets:
\begin{equation*}
\left(D\coprod \nu D\right) \biguplus \left(\biguplus_{t \in T} \epsilon_t X_t \right)=\biguplus_{t \in T} X_t.
\end{equation*}
Here we denote by the symbol $\biguplus$ the multiset sum.
\end{enumerate}
\end{lmm}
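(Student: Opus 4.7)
The plan is to realize the multiset identity (iii) as a decomposition of a fundamental domain for $\mathcal{O}_+^\times$ acting on the larger region $F \otimes \mathbb{R}_{(n-1)+}$, generalizing Yoshida's construction for $n = 2$. First, by weak approximation I would choose $\nu \in F^\times$ with $\iota_i(\nu) > 0$ for $i < n$ and $\iota_n(\nu) < 0$; after multiplying by a positive rational to clear denominators we may take $\nu \in \mathcal{O}$, which secures (ii).

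The key geometric observation is that, up to the measure-zero hyperplane $\{\iota_n = 0\}$, the region $F \otimes \mathbb{R}_{(n-1)+}$ is the disjoint union of the two sign orthants $F \otimes \mathbb{R}_{n+}$ and $\nu(F \otimes \mathbb{R}_{n+})$, both preserved by the action of $\mathcal{O}_+^\times$. Consequently, for any Shintani domain $D$ of $F \otimes \mathbb{R}_{n+}$, the set $D \sqcup \nu D$ is a fundamental domain for $\mathcal{O}_+^\times$ on $F \otimes \mathbb{R}_{(n-1)+}$. The identity (iii) can then be rewritten as $\biguplus_t X_t \setminus \biguplus_t \epsilon_t X_t = D \sqcup \nu D$, so I need to express this fundamental domain as a ``telescoping'' difference of finite cone unions.

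My main construction would proceed cone by cone: for each simplicial cone $C(\mathbf{v}_j)$ in the Shintani decomposition of $D \sqcup \nu D$, I aim to produce an enlarged set $X_j$ admitting a finite cone decomposition over $\mathcal{O}$-vectors, together with a totally positive unit $\epsilon_j$ satisfying $\epsilon_j X_j \subseteq X_j$ and $X_j \setminus \epsilon_j X_j = C(\mathbf{v}_j)$. Heuristically, $X_j$ should be the ``orbit-closure'' $\biguplus_{k \ge 0} \epsilon_j^k C(\mathbf{v}_j)$; the nontrivial task is to show that, for a suitable choice of $\epsilon_j$, this infinite union collapses to a finite union of simplicial cones with $\mathcal{O}$-basis vectors. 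Summing these shells over all $j$ then yields (iii) as a disjoint-union (hence multiset) identity.

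The main obstacle is to reconcile the finiteness condition (i) with the shell equation $X_j \setminus \epsilon_j X_j = C(\mathbf{v}_j)$. Since units act diagonally in the embedding coordinates while the cones are spanned by $\mathcal{O}$-vectors (not coordinate vectors), a generic unit does not act with a nonnegative matrix on a generic cone basis, and the infinite orbit union does not organize into a finite cone union. The resolution is to choose the Shintani domain $D$, the extra basis vectors of $X_j$, and the unit $\epsilon_j$ simultaneously in a coordinated fashion; the rank-$(n-1)$ freedom in $\mathcal{O}_+^\times$ provides enough flexibility, at the cost of possibly subdividing the Shintani cones further. Verifying that the shells glue together without gaps or overlaps to produce exactly $D \sqcup \nu D$ is the core technical content, which I expect to be carried out in \S 4.
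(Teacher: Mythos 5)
Your opening observation is correct and matches the paper's underlying picture: for $\nu$ with $\iota_i(\nu)>0$ for $i<n$ and $\iota_n(\nu)<0$, the set $D\sqcup\nu D$ is a fundamental domain for $\mathcal O^\times_+$ acting on $F\otimes\mathbb R_{(n-1)+}$, so identity (iii) amounts to writing this fundamental domain as a difference $\biguplus_t X_t \setminus \biguplus_t \epsilon_t X_t$. But the construction you propose for the $X_t$ does not work, and the gap you flag at the end is real and fatal to the orbit approach.

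You set $X_j=\biguplus_{k\ge 0}\epsilon_j^k\,C(\mathbf v_j)$, an infinite orbit union under a single unit, and then hope to make this a finite union of simplicial cones by a ``coordinated choice'' of $D$, $\epsilon_j$, and extra basis vectors. This cannot succeed in general. As $k\to\infty$ the cones $\epsilon_j^k C(\mathbf v_j)$ accumulate on the dominant eigenray of $\epsilon_j$ in $F\otimes\mathbb R$, a direction that is irrational (not in $F$); the resulting union has a non-polyhedral, non-rational boundary and hence admits no finite decomposition into cones with $\mathcal O$-basis vectors, regardless of how $D$ is subdivided. The rank-$(n-1)$ freedom in $\mathcal O^\times_+$ does not change the asymptotic geometry of a single unit's orbit. (The $n=2$ case is misleadingly forgiving here: there $\biguplus_k\epsilon^k(D\sqcup\nu D)$ collapses to a single closed two-dimensional cone, but that is a coincidence of rank one.)

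The paper's construction (\S4) is conceptually different and sidesteps the finiteness problem entirely. Instead of an unbounded orbit, it uses \emph{bounded prisms in the $\nu$-direction}: after replacing $D$ by a modified Shintani domain $D_0$ whose lower-dimensional cones are exactly the ``upper-side'' faces (with respect to $\nu$) of the top-dimensional cones, one sets, for each upper-side face $C(\mathbf v_t)$,
\begin{equation*}
X_t:=\{\,s\nu z+(1-s)z\mid z\in C(\mathbf v_t),\ 0\le s\le 1\,\}.
\end{equation*}
Such a prism is manifestly a finite disjoint union of simplicial cones with bases drawn from $\{v_1,\dots,v_r,n\nu v_1,\dots,n\nu v_r\}$, so (i) is immediate. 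The telescoping is then not driven by powers of one unit, but by the face-matching combinatorics of the Shintani domain: for each top-dimensional cone $C(\mathbf v_j)$, the prisms over its upper faces sweep out $C(\mathbf v_j)$ itself, its $\nu$-image, and the prisms over its lower faces, and the lower faces are $\epsilon(t)$-translates of upper faces of (possibly other) cones in $D_0$. The units $\epsilon_t$ are the finitely many transition units $\epsilon(t)$ of the Shintani tiling, not iterates of a single unit. Choosing $\nu$ close to $(1,\dots,1,-1)$ ensures the transversality condition (\ref{intprop}) that each line segment in the $\nu$-direction meets each face at most once, which makes the ``upper/lower side'' dichotomy well defined and the sweeping argument clean. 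Your proposal would need to be replaced by this prism construction; the orbit-closure idea should be abandoned.
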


We postpone the proof of Lemma \ref{keylemma} to \S \ref{proof} since it is rather technical. 
Here we only give an example in the case $[F:\mathbb Q]=2$.
We take the fundamental unit $\epsilon$ of a real quadratic field $F$ satisfying $|\iota_1(\epsilon)|>1$.
If $\epsilon$ is not totally positive, replace $\epsilon$ by $\epsilon^2$.
Then $D:=C(1) \coprod C(1,\epsilon)$ is a Shintani domain. 
Take any element $\nu \in \mathcal O$ satisfying $\iota_1(\nu)>0$, $\iota_2(\nu)<0$.
Then $X_1:=C(1,\nu)$, $\epsilon_1:=\epsilon$ ($T:=\{1\}$) satisfy the required conditions.

In order to prove Theorem \ref{main}, we generalize the definition of Yoshida's invariants 
to the case $D \subset F\otimes \mathbb R_{(n-1)+}$.
If $D \not\subset F\otimes \mathbb R_{n+}$, then (\ref{basic}) does not alway hold, so we need some modifications:
Consider a cone $C(\mathbf v)$ with $\mathbf v=(v_1,\dots,v_r) \in \mathcal O^r$, $C(\mathbf v) \subset F\otimes \mathbb R_{(n-1)+}$.
We assume that ${\mathfrak a}_c \mathfrak f$ and $c$ belong to the same narrow ideal class, as in the previous section.
We take $z \in ({\mathfrak a}_c \mathfrak f)^{-1} \cap C(\mathbf v)$ satisfying $z{\mathfrak a}_c \mathfrak f \in c$.
In particular, $(z{\mathfrak a}_c \mathfrak f,\mathfrak f)=1$.
Then for $1\leq i \leq r$ we have
\begin{equation*}
(z\pm v_i)(z)^{-1}=(1 \pm v_i/z) \in \overline{(1)} \text{ or }\overline{(\nu_n)},
\end{equation*}
where we denote by $\overline{(\alpha)}$ the ray class in $C_\mathfrak f$ of a principal ideal $(\alpha)$ ($\alpha=1,\nu_n$), 
$\nu_n$ is defined in Lemma \ref{cpxcnj}, and we additionally assume that $z- v_i \in C(\mathbf v)$ in the case $(z- v_i)(z)^{-1}$.
Therefore, instead of (\ref{basic}), we obtain 
\begin{multline*}
\left \{z \in ({\mathfrak a}_c \mathfrak f)^{-1} \cap C(\mathbf v),\, z{\mathfrak a}_c \mathfrak f \in c \text{ or } c_nc\right\} \\
=\coprod_{\mathbf x \in R(\{c,c_nc\},{\mathfrak a}_c,\mathbf v)}
\left\{(x_1+m_1)v_1+\dots + (x_r+m_r)v_r \mid 0\leq m_1,\dots,m_r \in \mathbb Z\right\}
\end{multline*}
by defining
\begin{equation*}
R(\{c,c_nc\},{\mathfrak a}_c,\mathbf v)
:=\left \{\mathbf x \in (\mathbb Q \cap (0,1])^r\mid z:=\sum_{i=1}^r x_i v_i \in ({\mathfrak a}_c \mathfrak f)^{-1},\, 
z{\mathfrak a}_c \mathfrak f \in c \text{ or } c_nc\right\}.
\end{equation*}
Hence we can write for $\iota \neq \iota_n$
\begin{equation*}
\sum_{z \in ({\mathfrak a}_c \mathfrak f)^{-1}\cap C(\mathbf v),\, (z){\mathfrak a}_c \mathfrak f \in c \text{ or } c_nc} \iota(z)^{-s}
=\sum_{\mathbf x \in R(\{c,c_nc\},{\mathfrak a}_c,\mathbf v)}\zeta(s,\iota(\mathbf v),\iota(\mathbf x\,{}^t\mathbf v)).
\end{equation*}
Therefore we can generalize the $G,W$-invariants as follows: 
Let $c,{\mathfrak a}_c$ be as above.
We assume that $D \subset F\otimes \mathbb R$ satisfies that
\begin{itemize}
\item $D$ has a cone decomposition of the form $D=\coprod_{j \in J}C(\mathbf v_j)$ ($|J|<\infty$, $\mathbf v_j \in \mathcal O^{r(j)}$, $1 \leq r(j) \leq n$).
\item $D\subset F\otimes \mathbb R_{(n-1)+}$. 
\end{itemize}
Then for $\iota \neq \iota_n$, we put 
\begin{equation*}
\begin{split}
G(\{c,c_nc\},\iota;D,{\mathfrak a}_c)
&:=\left[\frac{d}{ds}\sum_{z \in ({\mathfrak a}_c \mathfrak f)^{-1}\cap D,\, (z){\mathfrak a}_c \mathfrak f \in c \text{ or } c_nc}\iota(z)^{-s} \right]_{s=0} \\
&=\sum_{j \in J}\sum_{\mathbf x \in R(\{c,c_nc\},{\mathfrak a}_c,\mathbf v_j)}\log \Gamma(\iota(\mathbf x\,{}^t\mathbf v_j),\iota(\mathbf v_j)),
\end{split}
\end{equation*}
\begin{equation*}
\begin{split}
W(\{c,c_nc\},\iota;D,{\mathfrak a}_c)&:=-\log_\iota {\mathfrak a}_c \mathfrak f \cdot \left[\sum_{z \in ({\mathfrak a}_c \mathfrak f)^{-1}\cap D,\, 
(z){\mathfrak a}_c \mathfrak f \in c \text{ or } c_nc}\iota(z)^{-s} \right]_{s=0} \\
&=-\log_\iota {\mathfrak a}_c \mathfrak f \cdot \sum_{j \in J}\sum_{\mathbf x \in R(\{c,c_nc\},{\mathfrak a}_c,\mathbf v_j)} 
\zeta(0,\iota(\mathbf v_j),\iota(\mathbf x\,{}^t\mathbf v_j)).
\end{split}
\end{equation*}
Concerning the definition of the $V$-invariant in the case $D \subset F \otimes \mathbb R_{(n-1)+}$,  
the series
\begin{equation*}
\sum_{z \in ({\mathfrak a}_c \mathfrak f)^{-1} \cap D,\,(z){\mathfrak a}_c \mathfrak f \in c \text{ or } c_nc} 
\left((\iota_i(z)\iota_k(z))^{-s} -\iota_i(z)^{-s}-\iota_k(z)^{-s} \right)
\end{equation*}
may not converge when $i=n$ or $k=n$.
In order to avoid this problem, we introduce the following ``formal multiple zeta values'':
Let $\mathbf a:=(a_1,\dots,a_r), \mathbf x:=(x_1,\dots,x_r), m \in \mathbb N$ with $a_i>0$, $x_i\geq 0$, $\mathbf x \neq \mathbf 0$.
By abuse of notation, we put
\begin{equation} \label{assum}
\zeta(s,\mathbf a,\mathbf x):=
\zeta(s,\mathbf a,\mathbf x \,{}^t \mathbf a)=\sum_{0\leq m_1,m_2,\dots,m_r \in \mathbb Z}\left((x_1+m_1)a_1+\dots+(x_r+m_r)a_r\right)^{-s}.
\end{equation}
Then by \cite[Corollary to Proposition 1]{Shin1}, we have 
\begin{equation} \label{mzv}
\zeta(1-m,\mathbf a,\mathbf x)=(-1)^r(m-1)!\sum_{|\mathbf l|=m+r-1} \prod_{i=1}^r \frac{B_{l_i}(x_i)a_i^{l_i-1}}{l_i!}.
\end{equation}
Here we denote by $B_l(x)$ the $l$th Bernoulli polynomial,
and $\mathbf l=(l_1,\dots,l_r)$ runs over all $r$-tuples of non-negative integers satisfying $|\mathbf l|:=l_1+\dots + l_r=m+r-1$.
We define ``formal multiple zeta values'' as the same rational functions of $a_i,x_i$:
\begin{equation} \label{amzv}
\zeta_{\mathrm{fml}}(1-m,\mathbf a,\mathbf x):=(-1)^r(m-1)!\sum_{|\mathbf l|=m+r-1} \prod_{i=1}^r \frac{B_{l_i}(x_i)a_i^{l_i-1}}{l_i!}
\qquad (\mathbf a \in (\mathbb R^\times)^r,\ \mathbf x \in \mathbb R^r).
\end{equation}
For later use, we note the following relations.
We obtain 
\begin{equation} \label{abz1}
\zeta_{\mathrm{fml}}(1-m,\mathbf a,\mathbf x)=-\zeta_{\mathrm{fml}}(1-m,(-a_1,a_2,\dots,a_r),(1-x_1,x_2,\dots,x_r)),
\end{equation}
\begin{multline} \label{abz2}
\zeta_{\mathrm{fml}}(1-m,\mathbf a,(0,x_2,\dots,x_r)) =\zeta_{\mathrm{fml}}(1-m,\mathbf a,(1,x_2,\dots,x_r)) \\
+\zeta_{\mathrm{fml}}(1-m,(a_2,\dots,a_r),(x_2,\dots,x_r)),
\end{multline}
\begin{equation} \label{abz3}
\zeta_{\mathrm{fml}}(1-m,\mathbf a,\mathbf x)=\sum_{h=0}^{n-1} \zeta_{\mathrm{fml}}(1-m,(na_1,a_2,\dots,a_r),(x_1+h,x_2,\dots,x_r))
\end{equation}
by well-known formulas $B_l(1-x)=(-1)^lB_l(x)$, $B_l(0)=B_l(1)$ ($l\neq 1$), $B_1(0)=B_1(1)-1$, $B_l(nx)=n^{l-1}\sum_{h=0}^{n-1}B_l(x+h/n)$ ($n \in \mathbb N$).  
Furthermore we have
\begin{multline} \label{abz4}
\zeta_{\mathrm{fml}}(1-m,\mathbf a,\mathbf x)=\zeta_{\mathrm{fml}}(1-m,(a_1,a_1+a_2,a_3,\dots,a_r),(x_1-x_2+1,x_2,x_3,\dots,x_r)) \\
+\zeta_{\mathrm{fml}}(1-m,(a_1+a_2,a_2,a_3,\dots,a_r),(x_1,x_2-x_1,x_3,\dots,x_r)).
\end{multline}
We can show (\ref{abz4}) as follows:
When we can replace each $\zeta_{\mathrm{fml}}(\dotsm)$ by $\zeta(\dotsm)$ 
(that is, when $a_i>0$, $x_i\geq 0$, $1 \geq x_2-x_1 \geq 0$, and $\mathbf x,(x_1-x_2+1,x_2,x_3,\dots,x_r),(x_1,x_2-x_1,x_3,\dots,x_r) \neq \mathbf 0$),
this follows by decomposing the sum $\sum_{0\leq m_1,m_2,\dots,m_r \in \mathbb Z}$ in (\ref{assum}) as 
\begin{equation*}
\sum_{0\leq m_1,m_2,\dots,m_r \in \mathbb Z}
=\sum_{0\leq m_1,m_2,\dots,m_r \in \mathbb Z,\ m_1> m_2}
+\sum_{0\leq m_1,m_2,\dots,m_r \in \mathbb Z,\ m_1\leq m_2}.
\end{equation*}
Since $\zeta_{\mathrm{fml}}(1-m,\mathbf a,\mathbf x)$ is a rational function, the same equality holds for any $a_i,x_i$.
Similarly we can show that 
\begin{multline} \label{abz5}
\zeta_{\mathrm{fml}}(1-m,\mathbf a,\mathbf x)\\
=\zeta_{\mathrm{fml}}(1-m,(a_1,a_1+a_2,a_3,\dots,a_r),(x_1-x_2,x_2,x_3,\dots,x_r)) \\
+\zeta_{\mathrm{fml}}(1-m,(a_1+a_2,a_2,a_3,\dots,a_r),(x_1,x_2-x_1+1,x_3,\dots,x_r)).
\end{multline}
We can derive the following formula from (\ref{abz2}), (\ref{abz4}) by substituting $x:=x_1=x_2$.
\begin{multline} \label{abz6}
\zeta_{\mathrm{fml}}(1-m,\mathbf a,(x,x,x_3,\dots,x_r)) \\
=\zeta_{\mathrm{fml}}(1-m,(a_1,a_1+a_2,a_3,\dots,a_r),(1,x,x_3,\dots,x_r)) \\
\quad+\zeta_{\mathrm{fml}}(1-m,(a_1+a_2,a_2,a_3,\dots,a_r),(x,1,x_3,\dots,x_r)) \\
\quad+\zeta_{\mathrm{fml}}(1-m,(a_1+a_2,a_3,\dots,a_r),(x,x_3,\dots,x_r)).
\end{multline}
By combining (\ref{abz1}) with (\ref{abz2}), substituting $x_1=1$, 
we obtain
\begin{multline} \label{abz7}
\zeta_{\mathrm{fml}}(1-m,\mathbf a,(1,x_2,\dots,x_r))+\zeta_{\mathrm{fml}}(1-m,(-a_1,a_2,\dots,a_r),(1,x_2,\dots,x_r)) \\
+\zeta_{\mathrm{fml}}(1-m,(a_2,\dots,a_r),(x_2,\dots,x_r))=0.
\end{multline}

Next we introduce some notations and results in order to rearrange the expression of the $V$-invariant in \cite[Chapter III, (3.7)]{Yo}.
Let $A=(a_{ij})$ be an $n\times r$ matrix with $a_{ij}>0$, and $\mathbf x=(x_j)$ an $r$-dimensional vector with $x_j\geq 0$, $\mathbf x \neq \mathbf 0$.
For $\mathbf l=(l_1,\dots,l_r),j,k$ with $0\leq l_1,\dots,l_r$, $1\leq j,k \leq n$, $j\neq k$, we put
\begin{equation*}
C_{\mathbf l,j,k}(A):=\int_0^1 \left\{\prod_{m=1}^r (a_{jm} + a_{km} u)^{l_m-1} - \prod_{m=1}^r a_{jm}^{l_m-1}\right\} \frac{du}{u}.
\end{equation*}
If $a_{jq}a_{kp}-a_{jp}a_{kq}\neq 0$ for all $p,q$ with $p\neq q$, then we have \cite[Chapter I, Lemma 2.3]{Yo}
\begin{equation*}
C_{\mathbf l,j,k}(A)+C_{\mathbf l,k,j}(A)
=\sum_{p \text{ with }l_p=0}\prod_{q \neq p}\left(a_{jq}/a_{jp}-a_{kq}/a_{kp}\right)^{l_q-1}\log\tfrac{a_{jp}}{a_{kp}}.
\end{equation*}
Therefore we can write
\begin{equation} \label{vtozeta}
\begin{split}
&(-1)^r\sum_{|\mathbf l|=r}\left(C_{\mathbf l,j,k}(A)+C_{\mathbf l,k,j}(A)\right)\prod_{q=1}^r\frac{B_{l_q}(x_q)}{l_q!} \\
&=(-1)^r\sum_{|\mathbf l|=r}\left(\sum_{p \text{ with }l_p=0}\prod_{q \neq p}\left(a_{jq}/a_{jp}-a_{kq}/a_{kp}\right)^{l_q-1}\log\tfrac{a_{jp}}{a_{kp}}\right)
\prod_{q=1}^r\frac{B_{l_q}(x_q)}{l_q!} \\
&=-\sum_{p=1}^r\left((-1)^{r-1}\sum_{|\mathbf l|=r,\,l_p=0}\prod_{q\neq p}\frac{B_{l_q}(x_q)\left(a_{jq}/a_{jp}-a_{kq}/a_{kp}\right)^{l_q-1}}{l_q!}\right)
\log\tfrac{a_{jp}}{a_{kp}} \\
&=-\sum_{p=1}^r\zeta_{\mathrm{fml}}(-1,(a_{jq}/a_{jp}-a_{kq}/a_{kp})_{q\neq p},(x_q)_{q\neq p})\log\tfrac{a_{jp}}{a_{kp}}.
\end{split}
\end{equation}
Here for an $r$-dimensional vector $(x_q)=(x_1,\dots,x_r)$, we denote by $(x_q)_{q\neq p}$ the $(r-1)$-dimensional vector $(x_1,\dots,x_{p-1},x_{p+1},\dots,x_r)$.
We define the ``formal $V$-invariant'' as follows.

\begin{dfn} \label{defofvab}
Let $c, \mathfrak a_c, D=\coprod_{j \in J} C(\mathbf v_j) \subset F \otimes  \mathbb R_{(n-1)+}$ be as above.
Then for any real embedding $\iota$ of $F$, we define
\begin{equation} \label{dfnvab}
V_{\mathrm{fml}}(\{c,c_nc\},\iota;D,\mathfrak a_c):=\sum_{j \in J} \sum_{\mathbf x \in R(\{c,c_nc\},\mathfrak a_c,\mathbf v_j)} V(\mathbf v_j,\mathbf x,\iota).
\end{equation}
Here, for $\mathbf v=(v_1,v_2,\dots,v_r) \in \mathcal O^r$, $\mathbf x=(x_1,x_2,\dots,x_r) \in (\mathbb Q \cap (0,1])^r$, we put
\begin{multline*}
V(\mathbf v,\mathbf x,\iota):=
\frac{-1}{n}\sum_{k=2}^n \sum_{p=1}^r\zeta_{\mathrm{fml}}(-1,(\tfrac{\iota'_1(v_q)}{\iota'_1(v_p)}-\tfrac{\iota'_k(v_q)}{\iota'_k(v_p)})_{q\neq p},(x_q)_{q\neq p})
\log|\tfrac{\iota'_1(v_p)}{\iota'_k(v_p)}| \\
+\frac{1}{n^2}\sum_{1\leq i<k \leq n}\sum_{p=1}^r \zeta_{\mathrm{fml}}(-1,(\tfrac{\iota'_i(v_q)}{\iota'_i(v_p)}-\tfrac{\iota'_k(v_q)}{\iota'_k(v_p)})_{q\neq p},(x_q)_{q\neq p})
\log|\tfrac{\iota'_i(v_p)}{\iota'_k(v_p)}|
\end{multline*}
with $\{\iota'_1=\iota,\iota'_2,\dots,\iota'_n\}$ all real embeddings of $F$.
\end{dfn}

We note that, by (\ref{vtozeta}) and \cite[Chapter III, (3.7)]{Yo}, the equation (\ref{dfnvab}) holds 
if we replace $\{c,c_nc\},V_{\mathrm{fml}}(\cdots),\zeta_{\mathrm{fml}}(\cdots),D\subset F \otimes  \mathbb R_{(n-1)+}$ 
by $c,V(\cdots),\zeta(\cdots),D\subset F \otimes  \mathbb R_{n+}$ respectively.
In particular, if $ \{z \in ({\mathfrak a}_c \mathfrak f)^{-1} \cap C(\mathbf v),\, z{\mathfrak a}_c \mathfrak f \in c \text{ or } c_nc\}
= \{z \in ({\mathfrak a}_c \mathfrak f)^{-1} \cap C(\mathbf v),\, z{\mathfrak a}_c \mathfrak f \in c\}$ (e.g., if $c=c_nc$), then we have 
\begin{equation*}
V_{\mathrm{fml}}(\{c,c_nc\},\iota;D,\mathfrak a_c)=V(c,\iota;D,\mathfrak a_c).
\end{equation*}
The following Lemmas are modifications of \cite[Chapter III, \S 3.5, \S 3.6]{Yo}.

\begin{lmm} \label{welldef}
The definition of $V_{\mathrm{fml}}(\{c,c_nc\},\iota;D,\mathfrak a_c)$ 
does not depend on the choice of the cone decomposition of $D$.
\end{lmm}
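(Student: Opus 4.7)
The strategy will be to reduce the invariance to a single elementary subdivision of one cone and then verify it by a direct term-by-term computation using the identities (\ref{abz4})--(\ref{abz7}) for the formal multiple zeta values. I first observe that any two cone decompositions of $D$ admit a common refinement, and any such refinement can be reached by a finite sequence of elementary subdivisions in which a single cone $C(v_1,v_2,v_3,\dots,v_r)$ is replaced by
\[
C(v_1,v_1+v_2,v_3,\dots,v_r)\,\coprod\,C(v_1+v_2,v_2,v_3,\dots,v_r)\,\coprod\,C(v_1+v_2,v_3,\dots,v_r),
\]
where the last $(r-1)$-dimensional cone is the common face. Hence it suffices to prove that $V_{\mathrm{fml}}(\{c,c_nc\},\iota;D,\mathfrak a_c)$ is unchanged under one such subdivision applied to a single cone of the decomposition of $D$.

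Next, for each $\mathbf x=(x_1,\dots,x_r)\in R(\{c,c_nc\},\mathfrak a_c,\mathbf v)$ in the original cone, I trace how the orbit $\{(x_1+m_1)v_1+\dots+(x_r+m_r)v_r\mid m_i\in\mathbb Z_{\geq 0}\}$ distributes among the three new sub-cones: a point with $x_1>x_2$ lies in $C(v_1,v_1+v_2,v_3,\dots,v_r)$ with new coordinates $(x_1-x_2,x_2,x_3,\dots,x_r)$; a point with $x_1<x_2$ lies in $C(v_1+v_2,v_2,v_3,\dots,v_r)$ with coordinates $(x_1,x_2-x_1,x_3,\dots,x_r)$; and a point with $x_1=x_2$ sits on the boundary $C(v_1+v_2,v_3,\dots,v_r)$ with coordinates $(x_1,x_3,\dots,x_r)$. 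Since the $R$-sets of the new cones require coordinates in $(\mathbb Q\cap(0,1])^r$, shifts by (\ref{abz2}) or (\ref{abz7}) are applied to realign boundary cases like $x_1-x_2+1$ with the correct fundamental range.

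For each fixed pair of real embeddings $(\iota'_i,\iota'_k)$ and each privileged index $p\in\{1,\dots,r\}$ in the definition of $V(\mathbf v,\mathbf x,\iota)$, I then apply (\ref{abz4}) and (\ref{abz5})---with the substitution $a_q\leadsto \iota'_i(v_q)/\iota'_i(v_p)-\iota'_k(v_q)/\iota'_k(v_p)$---to split each $\zeta_{\mathrm{fml}}(-1,\dots)$ contribution into the two top-dimensional sub-cone pieces, and use (\ref{abz6}) to pick up the boundary contribution from $C(v_1+v_2,v_3,\dots,v_r)$. The logarithmic factors $\log|\iota'_i(v_p)/\iota'_k(v_p)|$ reassemble correctly because $\iota'_i(v_1+v_2)=\iota'_i(v_1)+\iota'_i(v_2)$ and analogously for $\iota'_k$, so ratios involving $v_1+v_2$ in the new cones match telescoping combinations arising from the original ratios with $p=1$ or $p=2$.

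The main obstacle will be the combinatorial bookkeeping in this last step: the index $p$ ranges over all coordinates, and one must simultaneously match contributions with $p\in\{1,2\}$ (where the ratios change because $v_1$ and $v_2$ are replaced by $v_1+v_2$) and with $p\geq 3$ (where the ratios are unchanged but the argument $(x_q)_{q\neq p}$ shifts according to the new coordinates), all while performing the domain-normalising shifts (\ref{abz2})/(\ref{abz7}) on each $R$-representative. Yoshida's identities (\ref{abz4})--(\ref{abz7}) are tailored precisely for this cancellation, but the matching across every $(p,i,k,\mathbf x)$ is the delicate computational core of the argument; once it is carried out for the elementary subdivision above, the general case follows by the reduction in the first step.
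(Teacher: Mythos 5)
Your argument is essentially the paper's: reduce to a single star subdivision replacing $C(v_1,\dots,v_r)$ by $C(v_1,v_1+v_2,v_3,\dots,v_r)\coprod C(v_1+v_2,v_2,v_3,\dots,v_r)\coprod C(v_1+v_2,v_3,\dots,v_r)$, track the trichotomy $x_1<x_2$, $x_1>x_2$, $x_1=x_2$ for each $\mathbf x\in R(\{c,c_nc\},\mathfrak a_c,\mathbf v)$, group the resulting identity of formal multiple zeta values by their logarithmic factors $\log|\iota'_i(v_p)/\iota'_k(v_p)|$, and close with (\ref{abz4})--(\ref{abz7}). This is exactly the paper's case (III).

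The omission that matters is that $V_{\mathrm{fml}}(\{c,c_nc\},\iota;D,\mathfrak a_c)$ is defined via the chosen ordered basis $\mathbf v_j\in\mathcal O^{r(j)}$ of each cone, not via the cone as a subset of $F\otimes\mathbb R$. To compare two decompositions of $D$ through a common refinement you must also allow (I) reordering the basis vectors and (II) rescaling $v_1\mapsto nv_1$ with $n\in\mathbb N$, since the same open cone is presented by many integral bases and passing to a common refinement inside $\mathcal O$ requires clearing denominators. Invariance under (I) is immediate from the definition; invariance under (II) follows from the scaling identity (\ref{abz3}). Your reduction to a single elementary subdivision silently grants both, so the argument as written has a small but genuine gap; once (I) and (II) are checked---as the paper does explicitly---the rest goes through as you describe.
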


\begin{proof}
Since the intersection of two cones is a disjoint union of a finite number of cones (or the empty set), it suffices to show that 
$V_{\mathrm{fml}}(\{c,c_nc\},\iota;D,\mathfrak a_c)$ does not change
when we replace the cone decomposition $D=\coprod_{j \in J} C(\mathbf v_j)$ by its refinement.
We have to consider the following cases for each $j \in J$. We write $\mathbf v:=\mathbf v_j$, $r:=r(j)$ for simplicity. 
\begin{enumerate}
\item[(I)] Change the order of the basis $\mathbf v=(v_{1},\dots,v_{r})$.
\item[(II)] Replace $v_{1}$ by $nv_{1}$ with $n \in \mathbb N$.
\item[(III)] Decompose $C(\mathbf v)$ into $C(\mathbf v^\sharp)\coprod C(\mathbf v^\flat) \coprod C(\mathbf v^\natural)$ with 
\begin{equation*}
\begin{split}
\mathbf v^\sharp&=(v^\sharp_1,v^\sharp_2,v^\sharp_3,\dots,v^\sharp_r):=(v_{1},v_{1}+v_{2},v_3,\dots,v_{r}), \\ 
\mathbf v^\flat&=(v^\flat_1,v^\flat_2,v^\flat_3,\dots,v^\flat_r):=(v_{1}+v_{2},v_{2},v_3,\dots,v_{r}), \\ 
\mathbf v^\natural&=(v^\natural_1,v^\natural_2,v^\natural_3,\dots,v^\natural_{r-1}):=(v_{1}+v_{2},v_{3},\dots,v_{r}). 
\end{split}
\end{equation*}
\end{enumerate}
We easily see that $V_{\mathrm{fml}}(\{c,c_nc\},\iota;D,\mathfrak a_c)$ does not change under (I), (II) by definition and (\ref{abz3}).
Let us consider the case (III).
We take $(x_1,\dots,x_r) \in R(\{c,c_nc\},\mathfrak a_c,\mathbf v)$ and put
\begin{equation*}
\begin{split}
(x^\sharp_1,x^\sharp_2,x^\sharp_3,\dots,x^\sharp_{r})&:=
\begin{cases}
(x_1-x_2+1,x_2,x_3,\dots,x_{r}) & \text{ if } x_1<x_2, \\
(x_1-x_2,x_2,x_3,\dots,x_{r}) & \text{ if } x_1>x_2, \\
(1,x,x_3,\dots,x_{r}) & \text{ if } x_1=x_2=:x, \\
\end{cases} \\
(x^\flat_1,x^\flat_2,x^\flat_3,\dots,x^\flat_{r})&:=
\begin{cases}
(x_1,x_2-x_1,x_3,\dots,x_{r}) & \text{ if } x_1<x_2, \\
(x_1,x_2-x_1+1,x_3,\dots,x_{r}) & \text{ if } x_1>x_2, \\
(x,1,x_3,\dots,x_{r}) & \text{ if } x_1=x_2=:x, \\
\end{cases} \\
(x^\natural_1,x^\natural_2,x^\natural_3,\dots,x^\natural_{r-1})&:=(x,x_3,x_4,\dots,x_r) \hspace{12pt} \text{ if } x_1=x_2=:x. 
\end{split}
\end{equation*}
Then we have 
\begin{multline*}
\left\{\sum_{i=1}^{r} x_iv_{i} +\sum_{i=1}^{r} m_iv_{i} \mid 0\leq m_1,m_2,\dots,m_{r} \in \mathbb Z\right\}\cap C(\mathbf v^*) \\
=
\begin{cases}
\left\{\displaystyle \sum_{i=1}^{r} x^*_iv^*_i +\sum_{i=1}^{r} m_iv^*_i \mid 0\leq m_1,m_2,\dots,m_{r} \in \mathbb Z\right\} & \text{ if } *=\sharp, \flat, \\
& \vspace{-7pt} \\
\left\{\displaystyle \sum_{i=1}^{r-1} x^*_iv^*_i +\sum_{i=1}^{r-1} m_iv^*_i \mid 0\leq m_1,m_2,\dots,m_{r-1} \in \mathbb Z\right\} & \text{ if } *=\natural,\,x_1=x_2, \\
& \vspace{-7pt} \\
\emptyset & \text{ if } *=\natural,\,x_1 \neq x_2.
\end{cases}
\end{multline*}
Therefore, when $x_1=x_2$, it suffices to show that for all $i,k$ with $1\leq i<k \leq n$ 

\begin{multline*}
\sum_{p=1}^r \zeta_{\mathrm{fml}}(-1,(\tfrac{\iota'_i(v_q)}{\iota'_i(v_p)}-\tfrac{\iota'_k(v_q)}{\iota'_k(v_p)})_{q\neq p},(x_q)_{q\neq p})
\log|\tfrac{\iota'_i(v_p)}{\iota'_k(v_p)}| \\
=\sum_{p=1}^r \zeta_{\mathrm{fml}}(-1,(\tfrac{\iota'_i(v^\sharp_q)}{\iota'_i(v^\sharp_p)}
-\tfrac{\iota'_k(v^\sharp_q)}{\iota'_k(v^\sharp_p)})_{q\neq p},(x^\sharp_q)_{q\neq p})
\log|\tfrac{\iota'_i(v^\sharp_p)}{\iota'_k(v^\sharp_p)}| \\
+\sum_{p=1}^r \zeta_{\mathrm{fml}}(-1,(\tfrac{\iota'_i(v^\flat_q)}{\iota'_i(v^\flat_p)}-\tfrac{\iota'_k(v^\flat_q)}{\iota'_k(v^\flat_p)})_{q\neq p},(x^\flat_q)_{q\neq p})
\log|\tfrac{\iota'_i(v^\flat_p)}{\iota'_k(v^\flat_p)}| \\
+\sum_{p=1}^{r-1} 
\zeta_{\mathrm{fml}}(-1,(\tfrac{\iota'_i(v^\natural_q)}{\iota'_i(v^\natural_p)}-\tfrac{\iota'_k(v^\natural_q)}{\iota'_k(v^\natural_p)})_{q\neq p},(x^\natural_q)_{q\neq p})
\log|\tfrac{\iota'_i(v^\natural_p)}{\iota'_k(v^\natural_p)}|.
\end{multline*}
When $x_1\neq x_2$, we drop the terms with the symbol ``$\natural$'' on the right-hand side.
By comparing the coefficients of $\log|\dotsm|$ on both sides, 
we can reduce the problem to the following relations among formal multiple zeta values:
\begin{multline*}
\zeta_{\mathrm{fml}}(-1,(\tfrac{\iota'_i(v_{q})}{\iota'_i(v_{p})}-\tfrac{\iota'_k(v_{q})}{\iota'_k(v_{p})})_{q\neq p},(x_q)_{q\neq p}) \\
=
\begin{cases}
\zeta_{\mathrm{fml}}(-1,(\tfrac{\iota'_i(v^\sharp_{q})}{\iota'_i(v^\sharp_{p})}-\tfrac{\iota'_k(v^\sharp_{q})}{\iota'_k(v^\sharp_{p})})_{q\neq p},(x^\sharp_q)_{q\neq p}) \\
+\zeta_{\mathrm{fml}}(-1,(\tfrac{\iota'_i(v^\flat_{q})}{\iota'_i(v^\flat_{p})}-\tfrac{\iota'_k(v^\flat_{q})}{\iota'_k(v^\flat_{p})})_{q\neq p},(x^\flat_q)_{q\neq p}) 
& (x_1\neq x_2,\,p\neq 1,2), \\
& \vspace{-7pt} \\
\zeta_{\mathrm{fml}}(-1,(\tfrac{\iota'_i(v^\sharp_{q})}{\iota'_i(v^\sharp_{p})}-\tfrac{\iota'_k(v^\sharp_{q})}{\iota'_k(v^\sharp_{p})})_{q\neq p},(x^\sharp_q)_{q\neq p}) \\
+\zeta_{\mathrm{fml}}(-1,(\tfrac{\iota'_i(v^\flat_{q})}{\iota'_i(v^\flat_{p})}-\tfrac{\iota'_k(v^\flat_{q})}{\iota'_k(v^\flat_{p})})_{q\neq p},(x^\flat_q)_{q\neq p}) \\
+\zeta_{\mathrm{fml}}(-1,(\tfrac{\iota'_i(v^\natural_{q})}{\iota'_i(v^\natural_{p-1})}-\tfrac{\iota'_k(v^\natural_{q})}{\iota'_k(v^\natural_{p-1})})_{q\neq p-1},
(x^\natural_q)_{q\neq p-1}) & (x_1= x_2,\,p\neq 1,2), 
\end{cases}
\end{multline*}
\begin{equation*}
\zeta_{\mathrm{fml}}(-1,(\tfrac{\iota'_i(v^\sharp_{q})}{\iota'_i(v^\sharp_{2})}-\tfrac{\iota'_k(v^\sharp_{q})}{\iota'_k(v^\sharp_{2})})_{q\neq 2},(x^\sharp_q)_{q\neq 2})
+\zeta_{\mathrm{fml}}(-1,(\tfrac{\iota'_i(v^\flat_{q})}{\iota'_i(v^\flat_{1})}-\tfrac{\iota'_k(v^\flat_{q})}{\iota'_k(v^\flat_{1})})_{q\neq 1},(x^\flat_q)_{q\neq 1})=0 
\qquad (x_1\neq x_2),
\end{equation*}
\begin{multline*}
\zeta_{\mathrm{fml}}(-1,(\tfrac{\iota'_i(v^\sharp_{q})}{\iota'_i(v^\sharp_{2})}-\tfrac{\iota'_k(v^\sharp_{q})}{\iota'_k(v^\sharp_{2})})_{q\neq 2},(x^\sharp_q)_{q\neq 2}) 
+\zeta_{\mathrm{fml}}(-1,(\tfrac{\iota'_i(v^\flat_{q})}{\iota'_i(v^\flat_{1})}-\tfrac{\iota'_k(v^\flat_{q})}{\iota'_k(v^\flat_{1})})_{q\neq 1},(x^\flat_q)_{q\neq 1}) \\
+\zeta_{\mathrm{fml}}(-1,(\tfrac{\iota'_i(v^\natural_{q})}{\iota'_i(v^\natural_{1})}
-\tfrac{\iota'_k(v^\natural_{q})}{\iota'_k(v^\natural_{1})})_{q\neq 1},(x^\natural_q)_{q\neq 1})=0 \qquad (x_1= x_2).
\end{multline*}
Indeed, the first relation follows from (\ref{abz4}), (\ref{abz5}), (\ref{abz6}), and
the remaining relations follow from (\ref{abz1}), (\ref{abz7}) respectively.
Then the assertion is clear.
\end{proof}

\begin{lmm} \label{vtoav}
Let $c,\mathfrak a_c$ be as above, $\{\iota'_1=\iota,\iota'_2,\dots,\iota'_n\}$ all real embeddings of $F$,  
and $C(\mathbf v)$ a cone with the basis $\mathbf v=(v_1,\dots,v_r) \in \mathcal O^r$.
We assume that $C(\mathbf v) \subset F\otimes \mathbb R_{(n-1)+}$.
\begin{enumerate}
\item Let $\alpha \in F_+ \cap \mathfrak a_c^{-1}$.
We assume that $\alpha^{-1}\mathbf v \in \mathcal O^r$ (multiplying $v_i$ by an integer, if necessary).
Then we have
\begin{multline*}
V_{\mathrm{fml}}(\{c,c_nc\},\iota;C(\alpha^{-1}\mathbf v),(\alpha)\mathfrak a_c)
- V_{\mathrm{fml}}(\{c,c_nc\},\iota; C(\mathbf v),\mathfrak a_c) \\
=\sum_{\mathbf x \in R(\{c,c_nc\},\mathfrak a_c,\mathbf v)} \left(\frac{1}{n}\log N(\alpha) - \log \iota (\alpha)\right) \\
\times \left(\iota(\zeta_{\mathrm{fml}}(0,\mathbf v,\mathbf x))
-\frac{1}{n} \sum_{k=1}^n \iota'_k(\zeta_{\mathrm{fml}}(0,\mathbf v,\mathbf x))\right).
\end{multline*}
\item Let $\nu_n$ be as in Lemma \ref{cpxcnj}. We assume that $\nu_n^{-1}\mathbf v \in \mathcal O^r$. Then we have 
\begin{multline*}
V_{\mathrm{fml}}(\{c,c_nc\},\iota;C(\nu_n^{-1}\mathbf v),(\nu_n)\mathfrak a_c)
- V_{\mathrm{fml}}(\{c,c_nc\},\iota; C(\mathbf v),\mathfrak a_c) \\
=\sum_{\mathbf x \in R(\{c,c_nc\},\mathfrak a_c,\mathbf v)} \left(\frac{1}{n}\log |N(\nu_n)| - \log \iota (\nu_n)\right) \\
\times \left(\iota(\zeta_{\mathrm{fml}}(0,\mathbf v,\mathbf x)) -\frac{1}{n} \sum_{k=1}^n \iota'_k(\zeta_{\mathrm{fml}}(0,\mathbf v,\mathbf x))\right).
\end{multline*}
Here we note that the roles of $c,c_nc$ in the symbol 
$V_{\mathrm{fml}}(\{c,c_nc\},\iota;C(\nu_n^{-1}\mathbf v),(\nu_n)\mathfrak a_c)$
are exchanged: $c_nc,(\nu_n)\mathfrak a_c$ belong to the same narrow ideal class, and we have $c_nc_nc=c$.
\end{enumerate}
\end{lmm}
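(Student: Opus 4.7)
The plan is to reduce both parts to a cone-wise transformation identity and then obtain it by extending Yoshida's corresponding formula for $V$ in the totally-positive setting via a polynomial-identity argument. The first step is to identify the index sets: the map $\mathbf x\mapsto\mathbf x$ gives a bijection $R(\{c,c_nc\},\mathfrak a_c,\mathbf v) = R(\{c,c_nc\},(\alpha)\mathfrak a_c,\alpha^{-1}\mathbf v)$, because $z=\mathbf x\,{}^t\mathbf v\in(\mathfrak a_c\mathfrak f)^{-1}$ with $z\mathfrak a_c\mathfrak f\in c$ or $c_nc$ if and only if $\alpha^{-1}z=\mathbf x\,{}^t(\alpha^{-1}\mathbf v)\in((\alpha)\mathfrak a_c\mathfrak f)^{-1}$ with $(\alpha^{-1}z)(\alpha)\mathfrak a_c\mathfrak f=z\mathfrak a_c\mathfrak f$ in the same class. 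For part (2), the analogous identification holds because $(\nu_n)\mathfrak a_c$ lies in the narrow class of $c_nc$ by Lemma \ref{cpxcnj}, so the new condition $(\nu_n^{-1}z)(\nu_n)\mathfrak a_c\mathfrak f\in c_nc$ or $c_n(c_nc)=c$ defines the same unordered pair $\{c,c_nc\}$.

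After this reduction, the lemma becomes a cone-wise identity for each $\mathbf x$. In the summand $V(\mathbf v,\mathbf x,\iota)$ of (\ref{dfnvab}), the factors $\zeta_{\mathrm{fml}}(-1,(\iota'_i(v_q)/\iota'_i(v_p)-\iota'_k(v_q)/\iota'_k(v_p))_{q\neq p},(x_q)_{q\neq p})$ depend only on the ratios $\iota'_i(v_q)/\iota'_i(v_p)$ within a single embedding, and so are invariant under $\mathbf v\mapsto\alpha^{-1}\mathbf v$. Only the log-factors shift, and they shift by
\[
\log\Bigl|\tfrac{\iota'_i(\alpha^{-1}v_p)}{\iota'_k(\alpha^{-1}v_p)}\Bigr| - \log\Bigl|\tfrac{\iota'_i(v_p)}{\iota'_k(v_p)}\Bigr| = \log|\iota'_k(\alpha)| - \log|\iota'_i(\alpha)|,
\]
which is independent of $p$. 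Substituting and rearranging, $V(\alpha^{-1}\mathbf v,\mathbf x,\iota)-V(\mathbf v,\mathbf x,\iota)$ becomes a linear combination of $\log|\iota'_k(\alpha)/\iota'_i(\alpha)|$'s weighted by sums $\sum_p\zeta_{\mathrm{fml}}(-1,\ldots)$. The key observation is that these coefficient sums are exactly those appearing, via (\ref{vtozeta}), in Yoshida's transformation formula \cite[Chapter III, \S 3.5--3.6]{Yo} for the honest $V$-invariant on cones inside $F\otimes\mathbb R_{n+}$, where precisely the analogous identity holds with $\zeta$ in place of $\zeta_{\mathrm{fml}}$. Since both sides of the identity are polynomial in the $\iota'_i(v_p)$ and $\iota'_i(\alpha)$ (by the explicit formula (\ref{mzv}) defining $\zeta_{\mathrm{fml}}$), agreement on the Zariski-open locus of totally-positive inputs forces the identity everywhere, giving part (1). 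Part (2) follows by substituting $\alpha=\nu_n$: the sign $\iota_n(\nu_n)<0$ is absorbed into $\log|\iota'_k(\nu_n)|$, and summing over $k$ gives $\log|N(\nu_n)|$.

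The main obstacle is the polynomial-identity step: Yoshida's original proof in the positive setting passes through the integrals $C_{\mathbf l,j,k}(A)$, which are only convergent for $A$ with positive entries, so his argument cannot be copied verbatim. However, (\ref{vtozeta}) already expresses the relevant combinations $C_{\mathbf l,j,k}+C_{\mathbf l,k,j}$ as a polynomial identity in the entries of $A$, with $\zeta_{\mathrm{fml}}$ on one side. Thus after performing Yoshida's manipulation for positive inputs and re-expressing via (\ref{vtozeta}), the transformation formula becomes a polynomial identity in the $\iota'_i(v_p)$ and $\iota'_i(\alpha)$, which persists under analytic continuation to arbitrary inputs in the cone $C(\mathbf v)\subset F\otimes\mathbb R_{(n-1)+}$.
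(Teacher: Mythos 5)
The overall structure of your argument matches the paper's: the bijection $R(\{c,c_nc\},\mathfrak a_c,\mathbf v)=R(\{c,c_nc\},(\alpha)\mathfrak a_c,\alpha^{-1}\mathbf v)$ reduces both parts to the cone-wise, $\mathbf x$-wise identity, and the observation that the factors $\zeta_{\mathrm{fml}}(-1,(\tfrac{\iota'_i(v_q)}{\iota'_i(v_p)}-\tfrac{\iota'_k(v_q)}{\iota'_k(v_p)})_{q\neq p},(x_q)_{q\neq p})$ depend only on ratios — so that under $\mathbf v\mapsto\alpha^{-1}\mathbf v$ only the logarithmic weights shift — is exactly the starting point of the paper's computation, corresponding to its displayed formula (\ref{eq1}). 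Your handling of part (ii) by the same bijection and absolute values is also the same as the paper's.

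However, there is a genuine gap at the key step. What the argument must ultimately establish is a pure algebraic identity, namely that for each $\mathbf l$ with $|\mathbf l|=r$
\begin{equation*}
\prod_{q=1}^r \iota'_i(v_q)^{l_q-1}-\prod_{q=1}^r \iota'_k(v_q)^{l_q-1}
=\sum_{p \text{ with } l_p=0}\prod_{q\neq p}\left(\frac{\iota'_i(v_q)}{\iota'_i(v_p)}-\frac{\iota'_k(v_q)}{\iota'_k(v_p)}\right)^{l_q-1},
\end{equation*}
which the paper isolates, states cleanly as Proposition~\ref{prp}, and proves by an induction on $t$. You do not prove anything equivalent to this. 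Instead you assert that ``after performing Yoshida's manipulation for positive inputs and re-expressing via (\ref{vtozeta}),'' the transformation formula becomes a polynomial identity that continues analytically. This is not a proof, for two reasons. First, the reference you lean on, Yoshida's (3.44) as cited in Lemma~\ref{replace2}, gives the transformation formula for the $V$-invariant summed over a whole Shintani domain and over $R(c,\mathfrak a_c,\mathbf v_j)$; it does not by itself give the cone-wise, $\mathbf x$-wise statement (\ref{alt}) that the analytic-continuation step requires as input. The paper's Proposition~\ref{prp} is precisely what supplies the cone-wise identity, and it is a new ingredient, not something you can outsource to Yoshida without tracing through his proof. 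Second, the sentence ``where precisely the analogous identity holds with $\zeta$ in place of $\zeta_{\mathrm{fml}}$'' is not correct: even for totally positive bases $\mathbf v$, the arguments $a_{jq}/a_{jp}-a_{kq}/a_{kp}$ occurring in (\ref{vtozeta}) can have either sign, so the formal evaluation $\zeta_{\mathrm{fml}}$ is already unavoidable on the ``positive'' side, and the dichotomy you draw (honest $\zeta$ for positive cones, $\zeta_{\mathrm{fml}}$ for mixed-sign cones) misrepresents where the real work lies.

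Finally, a smaller imprecision: you say ``both sides of the identity are polynomial in the $\iota'_i(v_p)$ and $\iota'_i(\alpha)$.'' They are not; both sides are $\mathbb R$-linear combinations of logarithms of ratios of conjugates of $\alpha$, with coefficients that are rational functions of the $\iota'_j(v_p)$'s. The Zariski-density argument can be salvaged if you first fix $\alpha$ and $\mathbf x$ and compare coefficients of the $\log$ factors — but to carry this out you again need to know the coefficient identity (i.e., Proposition~\ref{prp}) on a dense set, which is exactly the part that is missing. In short: right skeleton, but the essential algebraic identity is left unproven and the appeal to Yoshida does not close the gap.
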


\begin{proof}
First we prove (i). We have $R(\{c,c_nc\},\mathfrak a_c,\mathbf v) = R(\{c,c_nc\},(\alpha)\mathfrak a_c,\alpha^{-1}\mathbf v)$
by definition, so it suffices to show that 
\begin{multline} \label{alt}
V(\alpha^{-1} \mathbf v,\mathbf x,\iota'_1)- V(\mathbf v,\mathbf x,\iota'_1) \\
=\left(\frac{1}{n}\log N(\alpha) - \log \iota'_1 (\alpha) \right)\left(\iota'_1(\zeta_{\mathrm{fml}}(0,\mathbf v,\mathbf x))
-\frac{1}{n} \sum_{k=1}^n \iota'_k(\zeta_{\mathrm{fml}}(0,\mathbf v,\mathbf x))\right).
\end{multline}
Since $\zeta_{\mathrm{fml}}(-1,(\tfrac{\iota'_i(v_q)}{\iota'_i(v_p)}-\tfrac{\iota'_k(v_q)}{\iota'_k(v_p)})_{q\neq p},(x_q)_{q\neq p})$ does not change 
when we multiply $\mathbf v$ by $\alpha^{-1}$, we see that 
\begin{multline} \label{eq1}
V(\alpha^{-1} \mathbf v,\mathbf x,\iota'_1)-V(\mathbf v,\mathbf x,\iota'_1) \\
=\frac{1}{n}\sum_{k=2}^n 
\left(\sum_{p=1}^r\zeta_{\mathrm{fml}}(-1,(\tfrac{\iota'_1(v_q)}{\iota'_1(v_p)}-\tfrac{\iota'_k(v_q)}{\iota'_k(v_p)})_{q\neq p},(x_q)_{q\neq p})\right)
\log\tfrac{\iota'_1(\alpha)}{\iota'_k(\alpha)} \\
\quad -\frac{1}{n^2}\sum_{1\leq i<k \leq n}
\left(\sum_{p=1}^r\zeta_{\mathrm{fml}}(-1,(\tfrac{\iota'_i(v_q)}{\iota'_i(v_p)}-\tfrac{\iota'_k(v_q)}{\iota'_k(v_p)})_{q\neq p},(x_q)_{q\neq p})\right)
\log\tfrac{\iota'_i(\alpha)}{\iota'_k(\alpha)}.
\end{multline}
By definition, we have 
\begin{multline} \label{eq2}
\sum_{p=1}^r\zeta_{\mathrm{fml}}(-1,(\tfrac{\iota'_i(v_q)}{\iota'_i(v_p)}-\tfrac{\iota'_k(v_q)}{\iota'_k(v_p)})_{q\neq p},(x_q)_{q\neq p}) \\
=-(-1)^r\sum_{|\mathbf l|=r}\left(\sum_{p \text{ with }l_p=0}\prod_{q \neq p}\left(\frac{\iota'_i(v_q)}{\iota'_i(v_p)}-\frac{\iota'_k(v_q)}{\iota'_k(v_p)}\right)^{l_q-1}\right)
\prod_{q=1}^r\frac{B_{l_q}(x_q)}{l_q!}.
\end{multline}
We fix $\mathbf l=(l_1,\dots,l_r),i,k$ with $l_1+\dots +l_r =r$, $1\leq i < k \leq n$ and substitute the following values for the variables in Proposition \ref{prp} below:
Let $t:=|\{p \mid 1\leq p \leq r,\,l_p=0\}|$. Then there exist $1 \leq f_1,\dots,f_t,g_1,\dots,g_{r-t} \leq r$ satisfying 
$l_{f_1}=\dots=l_{f_t}=0$, $l_{g_1},\dots,l_{g_{r-t}}>0$, $\{f_1,\dots,f_t,g_1,\dots,g_{r-t}\}=\{1,2,\dots,r\}$.
We put $a_j:=\iota'_i(v_{f_j})^{-1}$, $a_j':=\iota'_k(v_{f_j})^{-1}$ ($1 \leq j \leq t$).
Concerning $b_1,\dots,b_t$, we define 
\begin{equation*}
\begin{split}
&b_1=b_2=\dots=b_{l_{g_1}-1}:=\iota'_i(v_{g_1}),\,b_{(l_{g_1}-1)+1}=b_{(l_{g_1}-1)+2}=\dots=b_{(l_{g_1}-1)+(l_{g_2}-1)}:=\iota'_i(v_{g_2}), \\
&\dots,b_{(l_{g_1}-1)+\dots+(l_{g_{r-t-1}}-1)+1}=b_{(l_{g_1}-1)+\dots+(l_{g_{r-t-1}}-1)+2}=\dots=b_t:=\iota'_i(v_{g_{r-t}}). 
\end{split}
\end{equation*}
For $b_1',\dots,b_t'$, we replace $\iota'_i$ by $\iota'_k$. 
Then Proposition \ref{prp} states that 
\begin{equation} \label{eq3}
\prod_{q=1}^r \iota'_i(v_q)^{l_q-1}-\prod_{q=1}^r \iota'_k(v_q)^{l_q-1}
=\sum_{p \text{ with }l_p=0}\prod_{q \neq p}\left(\frac{\iota'_i(v_q)}{\iota'_i(v_p)}-\frac{\iota'_k(v_q)}{\iota'_k(v_p)}\right)^{l_q-1}.
\end{equation}
By (\ref{eq1}), (\ref{eq2}), (\ref{eq3}) and (\ref{amzv}), we obtain
\begin{multline*}
V(\alpha^{-1} \mathbf v,\mathbf x,\iota'_1)- V(\mathbf v,\mathbf x,\iota'_1) 
=\frac{-1}{n}\sum_{k=2}^n 
\left(\iota'_1(\zeta_{\mathrm{fml}}(0,\mathbf v,\mathbf x)) -\iota'_k(\zeta_{\mathrm{fml}}(0,\mathbf v,\mathbf x))\right)
\log\tfrac{\iota'_1(\alpha)}{\iota'_k(\alpha)} \\
+\frac{1}{n^2}\sum_{1\leq i<k \leq n}
\left(\iota'_i(\zeta_{\mathrm{fml}}(0,\mathbf v,\mathbf x)) -\iota'_k(\zeta_{\mathrm{fml}}(0,\mathbf v,\mathbf x))\right)
\log\tfrac{\iota'_i(\alpha)}{\iota'_k(\alpha)}.
\end{multline*}
It is easy to see that the right-hand side of the above formula is equal to that of (\ref{alt}).
Then the assertion (i) is clear.
The same proof works for (ii) by using $R(\{c,c_nc\},\mathfrak a_c,\mathbf v) = R(\{c,c_nc\},(\nu_n)\mathfrak a_c,\nu_n^{-1}\mathbf v)$.
\end{proof}

\begin{prp} \label{prp}
Let $a_1,\dots,a_t, a_1',\dots,a_t', b_1,\dots,b_t, b_1',\dots,b_t'$ be variables.
Then we have 
\begin{equation} \label{poly}
\prod_{i=1}^t a_ib_i -\prod_{i=1}^t a_i'b_i'=\sum_{i=1}^t \left(\prod_{j=1}^t \left(a_ib_j-a_i'b_j'\right)\right)
\left(\prod_{1 \leq j \leq t,\ j\neq i} \left(\frac{a_i}{a_j}-\frac{a_i'}{a_j'}\right)^{-1}\right).
\end{equation}
\end{prp}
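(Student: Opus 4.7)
The plan is to prove (\ref{poly}) by reducing it to a one-variable polynomial identity and then applying Lagrange interpolation. First, I would introduce the ratios $c_i := a_i'/a_i$ and the auxiliary polynomial
\begin{equation*}
Q(x) := \prod_{j=1}^t (b_j - x b_j'),
\end{equation*}
which has degree $t$, leading coefficient $(-1)^t \prod_j b_j'$, and constant term $\prod_j b_j$. Using the elementary factorisations $a_i b_j - a_i' b_j' = a_i(b_j - c_i b_j')$ and $a_i/a_j - a_i'/a_j' = (a_i/a_j)(c_j - c_i)/c_j$, and cancelling the common powers of $a_i$, one rewrites the right-hand side of (\ref{poly}) as
\begin{equation*}
\Bigl(\prod_{k=1}^t a_k\Bigr) \sum_{i=1}^t \frac{Q(c_i) \prod_{j \neq i} c_j}{\prod_{j \neq i}(c_j - c_i)},
\end{equation*}
while the left-hand side becomes $\prod_k a_k \cdot \bigl(\prod_i b_i - \prod_i c_i b_i'\bigr)$. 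Dividing through by $\prod_k a_k$ reduces (\ref{poly}) to
\begin{equation*}
\prod_{i=1}^t b_i - \prod_{i=1}^t c_i b_i' = \sum_{i=1}^t \frac{Q(c_i) \prod_{j \neq i} c_j}{\prod_{j \neq i}(c_j - c_i)}.
\end{equation*}

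Next, I would apply Lagrange interpolation. The polynomial
\begin{equation*}
\tilde Q(x) := Q(x) - (-1)^t \Bigl(\prod_j b_j'\Bigr) \prod_{j=1}^t (x - c_j)
\end{equation*}
has degree at most $t-1$ (the leading terms cancel) and satisfies $\tilde Q(c_i) = Q(c_i)$ for each $i$, so
\begin{equation*}
\tilde Q(x) = \sum_{i=1}^t Q(c_i) \prod_{j \neq i} \frac{x - c_j}{c_i - c_j}.
\end{equation*}
Evaluating at $x = 0$ and using $(-1)^t \prod_j(-c_j) = \prod_j c_j$ yields $\tilde Q(0) = \prod_j b_j - \prod_j c_j b_j'$, while $\prod_{j \neq i}(-c_j)/(c_i - c_j) = \prod_{j \neq i} c_j / \prod_{j \neq i}(c_j - c_i)$; the two sides of the reduced identity then match term by term.

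Since every step is a formal polynomial manipulation valid wherever $c_j \neq c_i$ (equivalently $a_i a_j' \neq a_i' a_j$), the identity (\ref{poly}) holds identically in the variables $a_i, a_i', b_i, b_i'$. No deep obstacle is anticipated: the only subtlety is spotting the change of variables $c_i = a_i'/a_i$ that exposes the underlying one-variable polynomial structure, after which the classical Lagrange formula delivers the result immediately.
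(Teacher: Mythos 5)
Your proof is correct, and it takes a genuinely different route from the paper. The paper establishes the identity (\ref{poly}) by induction on $t$: writing the right-hand side as $\sum_i \prod_j d(i,j)$ with $d(i,j)=(a_ib_j-a_i'b_j')/(a_i/a_j-a_i'/a_j')$, assuming the case $t$, and then performing a rather delicate algebraic manipulation (including a second substitution back into the induction hypothesis) to pass to $t+1$. Your approach instead normalizes by $c_i=a_i'/a_i$, pulls out the common factor $\prod_k a_k$, and recognizes the residual identity as an instance of the Lagrange interpolation formula for $Q(x)=\prod_j(b_j-xb_j')$ evaluated at $x=0$, after subtracting off the top-degree piece to lower the degree below $t$. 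Both proofs are valid. The inductive argument is elementary and self-contained, but gives little hint as to \emph{why} the identity should hold; your interpolation argument is shorter, exposes the one-variable polynomial structure hiding behind the $t$ pairs of ratios, and makes the appearance of the Vandermonde-type denominators transparent. One minor point worth stating explicitly: (\ref{poly}) is an identity of rational functions, not polynomials, since the right-hand side has denominators $a_i/a_j-a_i'/a_j'$; your derivation proves it on the Zariski-dense open set where the $a_i$ are nonzero and the $c_i$ are pairwise distinct, which suffices. This caveat is equally implicit in the paper's formulation of the proposition.
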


\begin{proof}
We put $d(i,i):=a_ib_i-a_i'b_i'$, $d(i,j):=(a_ib_j-a_i'b_j')/(a_i/a_j-a_i'/a_j')$ ($i \neq j$). 
Then the right-hand side of (\ref{poly}) can be written as $\sum_{i=1}^t \prod_{j=1}^t d(i,j)$.
We use mathematical induction. 
The case $t=1$ is trivial. Assume that (\ref{poly}) holds for $t$.
We easily see that for $i \leq t$
\begin{equation*}
a_{t+1}b_{t+1}+\frac{a_i'}{a_{t+1}'}\frac{d(t+1,t+1)}{a_i/a_{t+1}-a_i'/a_{t+1}'}=d(i,t+1).
\end{equation*}
Hence we can write
\begin{equation*}
\begin{split}
&\prod_{i=1}^{t+1} a_ib_i -\prod_{i=1}^{t+1} a_i'b_i' \\
&=\left(\prod_{i=1}^t a_ib_i -\prod_{i=1}^t a_i'b_i'\right) a_{t+1}b_{t+1} + d(t+1,t+1) \left(\prod_{i=1}^t a_i'b_i'\right) \\
&=\sum_{i=1}^t \left( \left(d(i,t+1)- \frac{a_i'}{a_{t+1}'}\frac{d(t+1,t+1)}{a_i/a_{t+1}-a_i'/a_{t+1}'} \right) \prod_{j=1}^t d(i,j) \right)
+ d(t+1,t+1) \prod_{i=1}^t a_i'b_i'.
\end{split}
\end{equation*}
Therefore it suffices to show that 
\begin{equation} \label{istst}
\sum_{i=1}^t \left(\frac{a_i'}{a_{t+1}'}\frac{-1}{a_i/a_{t+1}-a_i'/a_{t+1}'}\prod_{j=1}^t d(i,j)  \right)
=\prod_{j=1}^t d(t+1,j) - \prod_{i=1}^t a_i'b_i'.
\end{equation}
Substituting $a_i:=\frac{1}{a_{t+1}/a_i-a_{t+1}'/a_i'}$, $b_i:=a_{t+1}b_i-a_{t+1}'b_i'$ into (\ref{poly}), we obtain
\begin{equation*}
\prod_{j=1}^t d(t+1,j) - \prod_{i=1}^t a_i'b_i' = 
\sum_{i=1}^t \frac{\prod_{j=1}^t \left(\frac{a_{t+1}b_j-a_{t+1}'b_j'}{a_{t+1}/a_i-a_{t+1}'/a_i'}-a_i'b_j'\right)}
{\prod_{1 \leq j \leq t,\ j\neq i} \left(\frac{a_{t+1}/a_j-a_{t+1}'/a_j'}{a_{t+1}/a_i-a_{t+1}'/a_i'}-a_i'/a_j'\right)}. 
\end{equation*}
Then (\ref{istst}) follows since we have 
\begin{equation*}
\begin{split}
\frac{\frac{a_{t+1}b_j-a_{t+1}'b_j'}{a_{t+1}/a_i-a_{t+1}'/a_i'}-a_i'b_j'}{\frac{a_{t+1}/a_j-a_{t+1}'/a_j'}{a_{t+1}/a_i-a_{t+1}'/a_i'}-a_i'/a_j'}
&=d(i,j) \hspace{12pt} (i\neq j), \\
\frac{a_{t+1}b_i-a_{t+1}'b_i'}{a_{t+1}/a_i-a_{t+1}'/a_i'}-a_i'b_i'
&=\frac{a_i'}{a_{t+1}'}\frac{-d(i,i)}{a_i /a_{t+1}-a_i'/a_{t+1}'}.
\end{split}
\end{equation*}
Hence the assertion is clear.
\end{proof}

\begin{dfn} 
Let $c, \mathfrak a_c, D=\coprod_{j \in J} C(\mathbf v_j) \subset F \otimes  \mathbb R_{(n-1)+}$ be as above,
and $\iota$ a real embedding of $F$ other than $\iota_n$.
Then we define
\begin{equation*}
X_{\mathrm{fml}}(\{c,c_nc\},\iota;D,\mathfrak a_c)
:=G(\{c,c_nc\},\iota;D,\mathfrak a_c)+W(\{c,c_nc\},\iota;D,\mathfrak a_c)+V_{\mathrm{fml}}(\{c,c_nc\},\iota;D,\mathfrak a_c).
\end{equation*}
\end{dfn}

By the definitions of the $G,W$-invariants and Lemma \ref{welldef}, 
we see that the definition of $X_{\mathrm{fml}}(\{c,c_n\},\iota;D,\mathfrak a_c)$ does not depend on the choice of a cone decomposition of $D$.
We also see that if $D,D'$ satisfy $D \cap D' =\emptyset$ then we have
\begin{equation} \label{disj}
X_{\mathrm{fml}}(\{c,c_nc\},\iota;D \coprod D',\mathfrak a_c)=X_{\mathrm{fml}}(\{c,c_nc\},\iota;D,\mathfrak a_c)+X_{\mathrm{fml}}(\{c,c_nc\},\iota;D',\mathfrak a_c).
\end{equation}

The following Lemmas \ref{replace}, \ref{replace2} are modifications of \cite[Chapter III, (3.35), (3.46)]{Yo}.

\begin{lmm} \label{replace}
Let $c, \mathfrak a_c, \iota\neq \iota_n, D\subset F \otimes  \mathbb R_{(n-1)+}$ be as above. We put
\begin{equation*}
Z:=\left[\sum_{z \in (\mathfrak a_c \mathfrak f)^{-1} \cap D,\, (z)\mathfrak a_c \mathfrak f \in c \text{ or } c_nc} \iota(z)^{-s}\right]_{s=0}.
\end{equation*}
Then $Z \in \iota (F)$.
Moreover the following assertions hold.
\begin{enumerate}
\item Assume that $D$ is a Shintani domain. Then $Z \in \mathbb Q$.
More precisely, if $c=c_nc$, or if $\mathcal O^\times \cap F\otimes \mathbb R_{(n-1)+}=\emptyset$, then we have
\begin{equation*}
Z=\zeta(0,c).
\end{equation*}
Otherwise we have 
\begin{equation*}
Z=\zeta(0,c)+\zeta(0,c_nc).
\end{equation*}
\item For $\alpha \in F_+ \cap \mathfrak a_c^{-1}$, we have
\begin{multline*}
X_{\mathrm{fml}}(\{c,c_nc\},\iota;\alpha^{-1} D,(\alpha)\mathfrak a_c)-X_{\mathrm{fml}}(\{c,c_nc\},\iota;D,\mathfrak a_c) \\
= \left(\log \iota(\alpha)-\log_\iota (\alpha)\right) Z +\left(\frac{1}{n}\log N(\alpha)-\log\iota(\alpha) \right) 
\left(Z-\frac{1}{n} \mathrm{Tr}_{\iota(F)/\mathbb Q}\, Z \right).
\end{multline*}
Additionally, assume that $D$ is a Shintani domain. Then we have 
\begin{equation*}
X_{\mathrm{fml}}(\{c,c_nc\},\iota;\alpha^{-1} D,(\alpha)\mathfrak a_c)
-X_{\mathrm{fml}}(\{c,c_nc\},\iota;D,\mathfrak a_c) = \left(\log \iota(\alpha)-\log_\iota (\alpha)\right) Z.
\end{equation*}
\item For $\epsilon \in \mathcal O^\times_+$, we have
\begin{equation*}
X_{\mathrm{fml}}(\{c,c_nc\},\iota;\epsilon D,\mathfrak a_c)-X_{\mathrm{fml}}(\{c,c_n\},\iota;D,\mathfrak a_c)
= \frac{\mathrm{Tr}_{\iota(F)/\mathbb Q}\, Z}{n} \log \iota (\epsilon)
\end{equation*}
\end{enumerate}
In particular, if $D,D'$ are Shintani domains 
and $\mathfrak a_c,\mathfrak a_c'$ are integral ideals satisfying that $\mathfrak a_c\mathfrak f, \mathfrak a_c'\mathfrak f,c$ belong the same narrow ideal class,
then there exist $\epsilon \in \mathcal O_+^\times$, $m \in \mathbb N$ satisfying 
\begin{equation} \label{last}
X_{\mathrm{fml}}(\{c,c_nc\},\iota;D',\mathfrak a_c')-X_{\mathrm{fml}}(\{c,c_nc\},\iota;D,\mathfrak a_c)
= \frac{1}{m}\log \iota(\epsilon)
\end{equation}
for all real embeddings $\iota$ of $F$ with $\iota \neq \iota_n$.
\end{lmm}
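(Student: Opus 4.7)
The plan is to establish each assertion of the lemma in turn, reducing most of the work to the explicit Bernoulli-polynomial formalism introduced earlier in this section. To see $Z\in\iota(F)$, decompose $D=\coprod_j C(\mathbf v_j)$ and expand $Z=\sum_{j,\mathbf x}\zeta_{\mathrm{fml}}(0,\iota(\mathbf v_j),\iota(\mathbf x\,{}^t\mathbf v_j))$; by the closed formula (\ref{amzv}) each such value is a $\mathbb Q$-linear combination of products of $\iota(v_{ji})$ (the $x_i$ being rational), hence lies in $\iota(F)$. For part (i), the hypothesis that $D$ is a Shintani domain forces $D\subset F\otimes\mathbb R_{n+}$, so every $z\in D$ is totally positive and $(z)\mathfrak a_c\mathfrak f$ lies in the narrow class of $\mathfrak a_c\mathfrak f$, i.e.\ the narrow class of $c$. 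By (\ref{=zeta}) the contribution of ``$(z)\mathfrak a_c\mathfrak f\in c$'' equals $\zeta(0,c)$, and the $c_nc$-contribution equals $\zeta(0,c_nc)$ exactly when $c$ and $c_nc$ lie in the same narrow class; via Lemma \ref{cpxcnj} and the sign pattern of $\nu_n$, this narrow coincidence is controlled by the existence of a unit of the appropriate sign pattern, which is the content of the case split in the lemma statement.

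For part (ii), compute the three summands of $X_{\mathrm{fml}}$ separately. The homogeneity $\zeta(s,\lambda\mathbf a,\lambda z)=\lambda^{-s}\zeta(s,\mathbf a,z)$ of Barnes' zeta function yields $\log\Gamma(\lambda z,\lambda\mathbf a)=\log\Gamma(z,\mathbf a)-\log\lambda\cdot\zeta(0,\mathbf a,z)$; applied with $\lambda=\iota(\alpha)^{-1}$ this gives a $G$-change of $\log\iota(\alpha)\cdot Z$. For $W$, the identity $\log_\iota((\alpha)\mathfrak a_c\mathfrak f)=\log_\iota(\alpha)+\log_\iota(\mathfrak a_c\mathfrak f)$ produces a change of $-\log_\iota(\alpha)\cdot Z$. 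Lemma \ref{vtoav}(i) supplies the $V_{\mathrm{fml}}$-change as $(\tfrac{1}{n}\log N(\alpha)-\log\iota(\alpha))(Z-\tfrac{1}{n}\mathrm{Tr}_{\iota(F)/\mathbb Q}Z)$, after identifying $Z=\sum_{j,\mathbf x}\iota(\zeta_{\mathrm{fml}}(0,\mathbf v_j,\mathbf x))$ and using $\sum_k\iota'_k(y)=\mathrm{Tr}_{\iota(F)/\mathbb Q}\iota(y)$ for $y\in F$. Summing the three contributions yields the displayed identity; when $D$ is a Shintani domain, part (i) gives $Z\in\mathbb Q$, so $\tfrac{1}{n}\mathrm{Tr}\,Z=Z$ and the second summand cancels. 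Part (iii) then follows by specializing part (ii) to $\alpha=\epsilon^{-1}$: since $\epsilon\in\mathcal O^\times_+$, one has $(\alpha)\mathfrak a_c=\mathfrak a_c$, $\log_\iota(\epsilon^{-1})=0$, and $N(\epsilon^{-1})=1$, so the right-hand side of part (ii) collapses to the asserted multiple of $\log\iota(\epsilon)$.

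To prove the concluding formula (\ref{last}), choose $\alpha\in F_+$ with $\mathfrak a_c'=(\alpha)\mathfrak a_c$, possible because $\mathfrak a_c,\mathfrak a_c'$ lie in the same narrow class. Since $\alpha$ is totally positive, $\alpha^{-1}D$ is again a Shintani domain, and part (ii) rewrites the change from $(D,\mathfrak a_c)$ to $(\alpha^{-1}D,\mathfrak a_c')$ as $(\log\iota(\alpha)-\log_\iota(\alpha))Z$, which by (\ref{lia}) and $Z\in\mathbb Q$ equals $\tfrac{1}{m_0}\log\iota(\epsilon_0)$ for some $\epsilon_0\in\mathcal O^\times_+$ and $m_0\in\mathbb N$ independent of $\iota$. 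To pass from $\alpha^{-1}D$ to $D'$ at fixed $\mathfrak a_c'$, take a common refinement of the two cone decompositions; after refinement, cones of $\alpha^{-1}D$ and $D'$ can be paired modulo $\mathcal O^\times_+$-translation because both tile $F\otimes\mathbb R_{n+}$ as fundamental domains, so by (\ref{disj}) and part (iii) the remaining change is $\sum_t\tfrac{\mathrm{Tr}\,Z_t}{n}\log\iota(\epsilon_t)$ with $Z_t\in\mathbb Q$ and $\epsilon_t\in\mathcal O^\times_+$ independent of $\iota$. Consolidating all contributions gives the required $\tfrac{1}{m}\log\iota(\epsilon)$. The main obstacle is this matching step: one must align the two Shintani decompositions via unit translations while keeping the $G,W,V_{\mathrm{fml}}$ decomposition valid, and then crucially observe that the units produced throughout are intrinsic (arising from $(\alpha)$ and from $\mathcal O^\times_+$-alignments) and therefore independent of the embedding $\iota$.
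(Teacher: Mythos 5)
Your overall approach matches the paper's: $Z\in\iota(F)$ from (\ref{mzv}), part (ii) by adding the three invariants' individual changes plus Lemma \ref{vtoav}-(i), part (iii) by specializing (ii) to a unit, and (\ref{last}) by composing the $\alpha$-transport (ii) with an $\mathcal O^\times_+$-translation comparison of the two Shintani decompositions. Those steps are fine; the points below are where your write-up diverges from what is actually needed.

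\textbf{Part (i) has a real gap.} You assert that ``the $c_nc$-contribution equals $\zeta(0,c_nc)$'' whenever that contribution is nonempty, but this does not follow directly from (\ref{=zeta}): the set $\{z\in(\mathfrak a_c\mathfrak f)^{-1}\cap D\mid(z)\mathfrak a_c\mathfrak f\in c_nc\}$ is a Shintani-type sum for the \emph{wrong} reference ideal --- $\mathfrak a_c\mathfrak f$ lies in the narrow class of $c$, not of $c_nc$ --- so (\ref{=zeta}) does not immediately apply. The paper's proof first shows that if this set is nonempty there is $\epsilon_n\in\mathcal O^\times\cap F\otimes\mathbb R_{(n-1)+}$, then transports the $c_nc$-part by the bijection $z\mapsto\nu_n\epsilon_n^{-1}z$ to a genuine Shintani-domain sum for $c_nc$ with reference ideal $(\nu_n)\mathfrak a_c$, to which (\ref{=zeta}) applies. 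Without this step the identification with $\zeta(0,c_nc)$ is unjustified. Your phrasing ``exactly when $c$ and $c_nc$ lie in the same narrow class'' also mishandles the case $c=c_nc$: there the two conditions coincide as sets, so $Z=\zeta(0,c)$ even when the sign-pattern unit exists, and your ``exactly when'' statement would wrongly give $2\zeta(0,c)$.

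\textbf{Minor issues elsewhere.} In the passage from $\alpha^{-1}D$ to $D'$ for (\ref{last}), your parenthetical ``$Z_t\in\mathbb Q$'' is false for a single cone of a Shintani domain --- only $Z_t\in\iota(F)$ holds in general. It happens not to matter, since part (iii) uses $\mathrm{Tr}_{\iota(F)/\mathbb Q}\,Z_t\in\mathbb Q$, but the claim as stated is incorrect. Also, the ``common refinement plus $\mathcal O^\times_+$-pairing'' step is exactly the content of \cite[Chapter III, Lemma 3.13]{Yo}; it is not an obvious consequence of both being fundamental domains, and should be cited or proved rather than asserted.
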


\begin{proof}
The fact that $Z \in \iota(F)$ follows from (\ref{mzv}).
For (i), assume that $D$ is a Shintani domain.
First we claim that 
\begin{center} 
($\ast$) \quad if $\{z \in (\mathfrak a_c \mathfrak f)^{-1} \cap D \mid (z)\mathfrak a_c \mathfrak f \in c_nc\}\neq \emptyset$, 
then $\mathcal O^\times \cap F\otimes \mathbb R_{(n-1)+}\neq \emptyset$.
\end{center}
Indeed, take $z_1 \in (\mathfrak a_c \mathfrak f)^{-1} \cap D$ satisfying $(z_1)\mathfrak a_c \mathfrak f \in c_nc$. 
Then $z_1 \in F_+$ by $D \subset F\otimes \mathbb R_{n+}$, 
so both $c$ and $c_nc$ belong to the narrow ideal class of $\mathfrak a_c \mathfrak f$.
Hence the narrow ideal class of $(\nu_n)$ is equal to that of $(1)$, i.e., 
there exist $z_2 \in F_+$, $\epsilon_n \in \mathcal O^\times$ satisfying $1=z_2\nu_n \epsilon_n$.
Hence the claim ($\ast$) holds since $\epsilon_n\in \mathcal O^\times \cap F\otimes \mathbb R_{(n-1)+}$.
Therefore, if $\mathcal O^\times \cap F\otimes \mathbb R_{(n-1)+}=\emptyset$,
then $\{z \in (\mathfrak a_c \mathfrak f)^{-1} \cap D \mid (z)\mathfrak a_c \mathfrak f \in c \text{ or } c_nc\}
=\{z \in (\mathfrak a_c \mathfrak f)^{-1} \cap D \mid (z)\mathfrak a_c \mathfrak f \in c\}$, 
so we have $Z=\zeta(0,c) \in \mathbb Q$ by (\ref{=zeta}).
The same holds when $c=c_nc$. 
Next assume that $c\neq c_nc$ and that there exists an element $\epsilon_n \in \mathcal O^\times \cap F\otimes \mathbb R_{(n-1)+}$.
Let $\nu_n$ be as in Lemma \ref{cpxcnj}.
Then we have the following bijection:
\begin{equation*}
\begin{array}{ccc}
\{z \in ((\nu_n)\mathfrak a_c \mathfrak f)^{-1} \cap \nu_n^{-1}\epsilon_n D \mid (z)(\nu_n)\mathfrak a_c \mathfrak f \in c_nc\}
&\rightarrow& \{z \in (\mathfrak a_c \mathfrak f)^{-1} \cap D \mid (z)\mathfrak a_c \mathfrak f \in c_nc\}, \\
z &\mapsto& \nu_n\epsilon_n^{-1}z.
\end{array}
\end{equation*}
Namely, we can write
\begin{equation} \label{dcmp}
\begin{split}
&\{z \in (\mathfrak a_c \mathfrak f)^{-1} \cap D \mid (z)\mathfrak a_c \mathfrak f \in c \text{ or } c_nc\} \\
&=\{z \in (\mathfrak a_c \mathfrak f)^{-1} \cap D \mid (z)\mathfrak a_c \mathfrak f \in c\} \\
&\hspace{140pt} 
\coprod \nu_n\epsilon_n^{-1}\{z \in ((\nu_n)\mathfrak a_c \mathfrak f)^{-1} \cap \nu_n^{-1}\epsilon_n D \mid (z)(\nu_n)\mathfrak a_c \mathfrak f \in c_nc\}.
\end{split}
\end{equation}
Hence we obtain
\begin{equation*}
Z=\left[\sum_{z \in (\mathfrak a_c \mathfrak f)^{-1} \cap D,\, (z)\mathfrak a_c \mathfrak f \in c } \iota(z)^{-s}\right]_{s=0}
+\iota(\nu_n\epsilon_n^{-1})^0
\left[\sum_{z \in ((\nu_n)\mathfrak a_c \mathfrak f)^{-1} \cap \nu_n^{-1}\epsilon_n D,\ (z)(\nu_n)\mathfrak a_c \mathfrak f \in c_nc} \iota(z)^{-s}\right]_{s=0},
\end{equation*}
which is equal to $\zeta(0,c)+\zeta(0,c_nc) \in \mathbb Q$ by (\ref{=zeta}).
This complete the proof of (i).
For (ii), a similar statement was proved by Yoshida \cite[Chapter III, \S 3.7]{Yo}.
The same proof works, as follows.
Since
$\{z \in ((\alpha)\mathfrak a_c \mathfrak f)^{-1} \cap \alpha^{-1}D \mid (z)(\alpha)\mathfrak a_c \mathfrak f \in c \text{ or } c_nc\} \rightarrow 
\{z \in (\mathfrak a_c \mathfrak f)^{-1} \cap D \mid (z)\mathfrak a_c \mathfrak f \in c \text{ or } c_nc\}$, 
$z \mapsto \alpha z$ is a bijection, we have 
\begin{equation*}
\sum_{z \in ((\alpha)\mathfrak a_c \mathfrak f)^{-1} \cap \alpha^{-1}D,\, (z)(\alpha)\mathfrak a_c \mathfrak f \in c \text{ or } c_nc} \iota(z)^{-s}
=\iota(\alpha)^s\sum_{z \in (\mathfrak a_c \mathfrak f)^{-1} \cap D,\, (z)\mathfrak a_c \mathfrak f \in c \text{ or } c_nc} \iota(z)^{-s}.
\end{equation*}
Hence, by definition, we obtain
\begin{equation*}
\begin{split}
G(\{c,c_nc\},\iota;\alpha^{-1} D,(\alpha)\mathfrak a_c)&=G(\{c,c_nc\},\iota;D,\mathfrak a_c)+\log \iota(\alpha) \cdot Z, \\
W(\{c,c_nc\},\iota;\alpha^{-1} D,(\alpha)\mathfrak a_c)&=W(\{c,c_nc\},\iota;D,\mathfrak a_c) - \log_\iota (\alpha) \cdot Z.
\end{split}
\end{equation*}
Lemma \ref{vtoav}-(i) implies that 
\begin{multline*}
V_{\mathrm{fml}}(\{c,c_nc\},\iota;\alpha^{-1}D,(\alpha)\mathfrak a_c) \\
=V_{\mathrm{fml}}(\{c,c_nc\},\iota; D,\mathfrak a_c) 
+ \left( \frac{1}{n}\log N(\alpha) - \log \iota (\alpha) \right) \left(\iota(Z)-\frac{1}{n}\mathrm{Tr}_{\iota(F)/\mathbb Q}\, Z \right),
\end{multline*}
so the first assertion of (ii) follows.
If $D$ is a Shintani domain, then $Z \in \mathbb Q$ by (i), so we have $\iota(Z)-\frac{1}{n}\mathrm{Tr}_{\iota(F)/\mathbb Q}\, Z=0$.
This completes the proof of (ii). 
The assertion (iii) follows from (ii) by putting $\alpha:=\epsilon$, and by noting that $\log N(\epsilon)=\log_\iota (\epsilon)=0$.
Finally, we prove (\ref{last}).
By the assumption on $\mathfrak a_c,\mathfrak a_c'$, there exists $\alpha \in F_+ \cap \mathfrak a_c^{-1}$ satisfying $\mathfrak a_c'=(\alpha) \mathfrak a_c$.
Hence, by (ii) and (\ref{lia}), 
we can write $X_{\mathrm{fml}}(\{c,c_nc\},\iota;\alpha^{-1}D,\mathfrak a_c')-X_{\mathrm{fml}}(\{c,c_nc\},\iota;D,\mathfrak a_c)$ in the form 
$\frac{1}{m}\log \iota(\epsilon)$ with $\epsilon \in \mathcal O_+^\times$, $m \in \mathbb N$.
Since $D',\alpha^{-1}D$ are Shintani domains, by \cite[Chapter III, Lemma 3.13]{Yo}, we can write these in the form
\begin{equation*}
D'=\coprod_{j \in J} C(\mathbf v_j),\quad \alpha^{-1}D=\coprod_{j \in J} \epsilon_j C(\mathbf v_j)
\end{equation*}
with $\epsilon_j \in \mathcal O^\times_+$.
Therefore, by (iii), $X_{\mathrm{fml}}(\{c,c_nc\},\iota;D',\mathfrak a_c')-X_{\mathrm{fml}}(\{c,c_nc\},\iota;\alpha^{-1}D,\mathfrak a_c')$ also
can be written in the form $\frac{1}{m}\log \iota(\epsilon)$. Then the assertion (\ref{last}) is clear.
\end{proof}

\begin{lmm} \label{replace2}
Let $c \in C_\mathfrak f$.
If $D,D'$ are Shintani domains 
and $\mathfrak a_c,\mathfrak a_c'$ are integral ideals satisfying that $\mathfrak a_c\mathfrak f, \mathfrak a_c'\mathfrak f,c$ belong the same narrow ideal class,
then there exist $\epsilon \in \mathcal O_+^\times$, $m \in \mathbb N$ satisfying 
\begin{equation*}
X(c,\iota;D',\mathfrak a_c')-X(c,\iota;D,\mathfrak a_c)
= \frac{1}{m}\log \iota(\epsilon)
\end{equation*}
for all real embeddings $\iota$ of $F$.
\end{lmm}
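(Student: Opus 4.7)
The plan is to mimic the argument for (\ref{last}) at the end of Lemma \ref{replace}, now applied to the original (non-formal) $X$-invariant on $F \otimes \mathbb{R}_{n+}$. The simplification making this direct is that for any Shintani domain $D \subset F \otimes \mathbb{R}_{n+}$,
\[
Z := \left[\sum_{z \in (\mathfrak{a}_c\mathfrak{f})^{-1} \cap D,\,(z)\mathfrak{a}_c\mathfrak{f} \in c} \iota(z)^{-s}\right]_{s=0} = \zeta(0,c) \in \mathbb{Q}
\]
by (\ref{=zeta}), so the correction term $\iota(Z) - \tfrac{1}{n}\mathrm{Tr}_{\iota(F)/\mathbb{Q}} Z$ that enters Lemma \ref{replace}(ii) vanishes identically. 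Moreover, summing $\log_\iota$ over all $\iota$ gives $\log N$, so everything in sight takes a rational multiple of $\log \iota(\mathrm{unit})$.

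I would first establish direct analogues of parts (ii) and (iii) of Lemma \ref{replace} for the original $X$. Given $\alpha \in F_+ \cap \mathfrak{a}_c^{-1}$, the bijection $z \mapsto \alpha z$ between the sets $\{z \in ((\alpha)\mathfrak{a}_c\mathfrak{f})^{-1}\cap \alpha^{-1}D : (z)(\alpha)\mathfrak{a}_c\mathfrak{f} \in c\}$ and $\{z \in (\mathfrak{a}_c\mathfrak{f})^{-1}\cap D : (z)\mathfrak{a}_c\mathfrak{f}\in c\}$ supplies the $G$- and $W$-changes $\log\iota(\alpha)\, Z$ and $-\log_\iota(\alpha)\, Z$, exactly as in the formal case, while Lemma \ref{vtoav}(i), read in the non-formal instance permitted by the remark after Definition \ref{defofvab}, supplies the $V$-change $(\tfrac{1}{n}\log N(\alpha)-\log\iota(\alpha))(\iota(Z)-\tfrac{1}{n}\mathrm{Tr}_{\iota(F)/\mathbb{Q}} Z)$. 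Specialising to $\alpha = \epsilon \in \mathcal{O}_+^\times$ and using $\log N(\epsilon) = \log_\iota\epsilon = 0$ gives the analogue of (iii),
\[
X(c,\iota;\epsilon D,\mathfrak{a}_c) - X(c,\iota;D,\mathfrak{a}_c) = \tfrac{\mathrm{Tr}_{\iota(F)/\mathbb{Q}} Z}{n}\log\iota(\epsilon),
\]
valid for any $D \subset F\otimes\mathbb{R}_{n+}$, and hence applicable cone by cone.

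To conclude, I would write $\mathfrak{a}_c' = (\alpha)\mathfrak{a}_c$ with $\alpha \in F_+ \cap \mathfrak{a}_c^{-1}$; then $\alpha^{-1}D$ is again a Shintani domain because multiplication by $\alpha^{-1}$ commutes with the $\mathcal{O}_+^\times$-action on $F\otimes\mathbb{R}_{n+}$. Applying the analogue of (ii) to $(\alpha,D)$ and using $\zeta(0,c)\in\mathbb{Q}$ to kill the correction term yields
\[
X(c,\iota;\alpha^{-1}D,\mathfrak{a}_c') - X(c,\iota;D,\mathfrak{a}_c) = (\log\iota(\alpha)-\log_\iota\alpha)\,\zeta(0,c),
\]
which by (\ref{lia}) and the rationality of $\zeta(0,c)$ equals $\tfrac{1}{m_1}\log\iota(\epsilon_1)$ with a common $\epsilon_1 \in \mathcal{O}_+^\times$ and $m_1 \in \mathbb{N}$ across all $\iota$. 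I would then invoke \cite[Chapter III, Lemma 3.13]{Yo} to match the cones of $D'$ and $\alpha^{-1}D$ as $D' = \coprod_j C(\mathbf v_j)$ and $\alpha^{-1}D = \coprod_j \epsilon_j C(\mathbf v_j)$ with $\epsilon_j \in \mathcal{O}_+^\times$; since $G$, $W$, $V$ are additive over disjoint cone decompositions, applying the analogue of (iii) cone by cone gives
\[
X(c,\iota;\alpha^{-1}D,\mathfrak{a}_c') - X(c,\iota;D',\mathfrak{a}_c') = \sum_j \tfrac{\mathrm{Tr}_{\iota(F)/\mathbb{Q}} Z_j}{n}\log\iota(\epsilon_j),
\]
where each cone-wise $Z_j \in \iota(F)$ has $\mathrm{Tr}_{\iota(F)/\mathbb{Q}} Z_j/n \in \mathbb{Q}$ by (\ref{mzv}); the right-hand side is therefore a $\mathbb{Q}$-linear combination of $\log\iota(\epsilon_j)$'s with the same rational coefficients for every $\iota$, hence of the form $\tfrac{1}{m_2}\log\iota(\epsilon_2)$. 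Summing the two contributions and clearing denominators produces the common $\epsilon \in \mathcal{O}_+^\times$, $m\in\mathbb{N}$ required. I expect no real obstacle: the only genuine work -- the cone-by-cone $V$-computation -- is already packaged in Lemma \ref{vtoav}, and the one point to handle with care is verifying that the same $\epsilon,m$ can be chosen uniformly in $\iota$, which follows because all coefficients of $\log\iota(\cdot)$ that appear are rational numbers independent of $\iota$.
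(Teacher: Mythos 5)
Your proof is correct and follows essentially the same strategy as the paper: establish the $(D,\mathfrak a_c)\to(\alpha^{-1}D,(\alpha)\mathfrak a_c)$ transformation formula for $G,W,V$ (the paper cites \cite[Chapter III, (3.44)]{Yo} where you instead read Lemma \ref{vtoav}(i) in its non-formal instance, which is equivalent), observe the correction term vanishes since $Z=\zeta(0,c)\in\mathbb Q$, then use \cite[Chapter III, Lemma 3.13]{Yo} and the $\epsilon D$ transformation cone by cone. The only point the paper leaves implicit that you spell out -- the uniformity of $\epsilon,m$ over $\iota$, which follows since all $Z_j$ lie in $\iota(F)$ with the underlying field element independent of $\iota$, so the coefficients are $\iota$-independent rationals -- is a genuine detail and you handle it correctly.
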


\begin{proof}
We proceed similarly to the above proof.
Let $Z:=\left[\sum_{z \in (\mathfrak a_c \mathfrak f)^{-1} \cap D,\, (z)\mathfrak a_c \mathfrak f \in c} \iota(z)^{-s}\right]_{s=0}$ 
and $\alpha \in F_+ \cap \mathfrak a_c^{-1}$.
We easily see that
\begin{equation*}
\begin{split}
G(c,\iota;\alpha^{-1} D,(\alpha)\mathfrak a_c)&=G(c,\iota;D,\mathfrak a_c)+\log \iota(\alpha) \cdot Z, \\
W(c,\iota;\alpha^{-1} D,(\alpha)\mathfrak a_c)&=W(c,\iota;D,\mathfrak a_c) - \log_\iota (\alpha) \cdot Z.
\end{split}
\end{equation*}
In addition, by \cite[Chapter III, (3.44)]{Yo}, we have
\begin{equation*}
V(c,\iota;\alpha^{-1}D,(\alpha)\mathfrak a_c)=V(c,\iota; D,\mathfrak a_c)
+ \left( \frac{1}{n}\log N(\alpha) - \log \iota (\alpha) \right) \left(Z-\frac{1}{n}\mathrm{Tr}_{\iota(F)/\mathbb Q}\, Z \right).
\end{equation*}
Therefore we obtain
\begin{multline*}
X(c,\iota;\alpha^{-1} D,(\alpha)\mathfrak a_c) -X(c,\iota;D,\mathfrak a_c) \\
= \left(\log \iota(\alpha)-\log_\iota (\alpha)\right) Z
+ \left( \frac{1}{n}\log N(\alpha) - \log \iota (\alpha) \right) \left(Z-\frac{1}{n}\mathrm{Tr}_{\iota(F)/\mathbb Q}\, Z \right),
\end{multline*}
which is corresponding to Lemma \ref{replace}-(ii).
The remaining arguments are exactly the same as in the proof of Lemma \ref{replace}.
\end{proof}

\begin{lmm} \label{replace3}
Let $c, \mathfrak a_c, \iota\neq \iota_n$ be as above, $\nu_n$ as in Lemma \ref{cpxcnj}, and $D$ a Shintani domain.
Then the following assertions hold.
\begin{enumerate}
\item We have
\begin{equation*}
X_{\mathrm{fml}}(\{c,c_nc\},\iota;\nu_n^{-1} D,(\nu_n)\mathfrak a_c)-X_{\mathrm{fml}}(\{c,c_nc\},\iota;D,\mathfrak a_c) 
= \left(\log \iota(\nu_n)-\log_\iota (\nu_n)\right) \zeta(0,c).
\end{equation*}
Here we note that the roles of $c,c_nc$ in the symbol $X_{\mathrm{fml}}(\{c,c_nc\},\iota;\nu_n^{-1} D,(\nu_n)\mathfrak a_c)$ are exchanged.
\item If $c= c_nc$ or if $\mathcal O^\times \cap F\otimes \mathbb R_{(n-1)+}=\emptyset$, then we have 
\begin{equation*}
X_{\mathrm{fml}}(\{c,c_nc\},\iota; D,\mathfrak a_c)=X(c,\iota; D,\mathfrak a_c).
\end{equation*}
Otherwise, we have
\begin{multline*}
X_{\mathrm{fml}}(\{c,c_nc\},\iota; D,\mathfrak a_c) \\
=X(c,\iota;D,\mathfrak a_c)+X(c_nc,\iota;\nu_n^{-1}\epsilon_n D,(\nu_n)\mathfrak a_c) 
+\left(\log \iota(\nu_n\epsilon_n^{-1})-\log_\iota (\nu_n\epsilon_n^{-1}) \right)\zeta(0,c_nc).
\end{multline*}
\end{enumerate}
\end{lmm}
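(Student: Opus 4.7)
My plan is to handle parts (i) and (ii) by the same underlying mechanism: track how the three ingredients $G$, $W$, $V_{\mathrm{fml}}$ transform under multiplication of $D$ and $\mathfrak a_c$ by $\nu_n$ (for (i)) or by $\nu_n\epsilon_n^{-1}$ (for (ii)), using an appropriate substitution between the relevant summation sets, and then collect the contributions.

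For (i), I would first verify that the substitution $w = \nu_n z$ gives a bijection $\{z \in ((\nu_n)\mathfrak a_c\mathfrak f)^{-1}\cap\nu_n^{-1}D : (z)(\nu_n)\mathfrak a_c\mathfrak f \in c \text{ or } c_nc\} \to \{w \in (\mathfrak a_c\mathfrak f)^{-1}\cap D : (w)\mathfrak a_c\mathfrak f \in c \text{ or } c_nc\}$. The key point is that the ray class of the principal ideal $(z\nu_n)$ equals that of $(z)$ multiplied by $c_n$ (because $\nu_n\equiv 1\bmod\mathfrak m$ with sign pattern $(+,\dots,+,-)$), and the set $\{c,c_nc\}$ is invariant under multiplication by $c_n$ since $c_n^2=\mathrm{id}$. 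Since $\iota(\nu_n)>0$ for $\iota\neq\iota_n$, this bijection yields $\sum_z\iota(z)^{-s}=\iota(\nu_n)^{s}\sum_w\iota(w)^{-s}$; evaluating at $s=0$ and then differentiating produces $G_{\mathrm{new}}-G_{\mathrm{old}}=Z\log\iota(\nu_n)$, while the additivity $\log_\iota((\nu_n)\mathfrak a_c\mathfrak f)=\log_\iota(\nu_n)+\log_\iota(\mathfrak a_c\mathfrak f)$ yields $W_{\mathrm{new}}-W_{\mathrm{old}}=-Z\log_\iota(\nu_n)$, where $Z$ is the common value of the zeta-sum at $s=0$. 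For $V_{\mathrm{fml}}$, I would apply Lemma \ref{vtoav}-(ii) cone by cone to the decomposition of $D$; the correction is proportional to $\iota(Z)-\frac{1}{n}\mathrm{Tr}_{\iota(F)/\mathbb Q}(Z)$, which vanishes because $Z\in\mathbb Q$ by Lemma \ref{replace}-(i) (using that $D$ is a Shintani domain). Summing the three pieces gives the claimed identity $(\log\iota(\nu_n)-\log_\iota(\nu_n))\zeta(0,c)$.

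For (ii), the first case ($c=c_nc$, or $\mathcal O^\times\cap F\otimes\mathbb R_{(n-1)+}=\emptyset$) is immediate: by the analysis in Lemma \ref{replace}-(i), the set $\{z\colon(z)\mathfrak a_c\mathfrak f\in c\text{ or }c_nc\}$ reduces to $\{z\colon(z)\mathfrak a_c\mathfrak f\in c\}$, so the $G$, $W$, and $V_{\mathrm{fml}}$ invariants coincide term by term with their un-formal counterparts, and the equality $X_{\mathrm{fml}}(\{c,c_nc\},\iota;D,\mathfrak a_c)=X(c,\iota;D,\mathfrak a_c)$ follows. In the general case I would invoke the disjoint decomposition (\ref{dcmp}) from the proof of Lemma \ref{replace}-(i): the contribution from $(z)\mathfrak a_c\mathfrak f\in c$ reassembles directly into $X(c,\iota;D,\mathfrak a_c)$, while the contribution from $(z)\mathfrak a_c\mathfrak f\in c_nc$, mapped bijectively by $z\mapsto\nu_n\epsilon_n^{-1}z$, corresponds to the summation set defining $X(c_nc,\iota;\nu_n^{-1}\epsilon_n D,(\nu_n)\mathfrak a_c)$. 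Differentiating and applying the argument of Lemma \ref{replace}-(ii) specialized to $\alpha=\nu_n\epsilon_n^{-1}$ (which is principal but not totally positive because of the sign at $\iota_n$) yields the remaining correction $(\log\iota(\nu_n\epsilon_n^{-1})-\log_\iota(\nu_n\epsilon_n^{-1}))\zeta(0,c_nc)$, completing the formula.

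The main technical obstacle I anticipate is the careful bookkeeping of ray classes under the substitutions $z\mapsto\nu_n z$ and $z\mapsto\nu_n\epsilon_n^{-1}z$: because $\nu_n$ is positive at $\iota_1,\dots,\iota_{n-1}$ and negative at $\iota_n$, the resulting principal ideal picks up a factor of $c_n$, and this must be compatible with the swapping of the roles of $c$ and $c_nc$ noted in the statement. A closely related subtlety is the interplay between $\log_\iota$ (defined using totally positive generators $\pi_\mathfrak p$ and the narrow class number $h_F^+$) and the analytic $\log\iota(\cdot)$; their discrepancy, controlled by (\ref{lia}), produces precisely the $(\log\iota(\nu_n)-\log_\iota(\nu_n))$ factor in (i) and the $(\log\iota(\nu_n\epsilon_n^{-1})-\log_\iota(\nu_n\epsilon_n^{-1}))$ factor in (ii). Once this matching is in place, the remaining computation is a line-by-line adaptation of Yoshida's \cite[Chapter III, (3.35), (3.46)]{Yo}, already implemented in Lemma \ref{replace}.
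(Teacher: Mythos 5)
Your proposal follows the same route as the paper's (quite terse) proof: a bijection on the summation sets combined with tracking the transformation of each of $G$, $W$, $V_{\mathrm{fml}}$, invoking Lemma \ref{vtoav}-(ii) for the $V$-piece and the Shintani-domain hypothesis (so that the relevant value $Z$ lies in $\mathbb Q$) to kill the trace correction; for (ii) you use the decomposition (\ref{dcmp}) and the bijection $z\mapsto\nu_n\epsilon_n^{-1}z$, exactly as the paper does. This matches the paper's argument in substance and in detail.

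\end{document}
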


\begin{proof}
We again proceed similarly to the above proofs.
In particular, for (i), the same proof as in Lemma \ref{replace}-(ii) works, by using Lemma \ref{vtoav}-(ii) and 
a bijection $\{z \in ((\nu_n)\mathfrak a_c \mathfrak f)^{-1} \cap \nu_n^{-1}D \mid (z)(\nu_n)\mathfrak a_c \mathfrak f \in c \text{ or } c_nc\} \rightarrow 
\{z \in (\mathfrak a_c \mathfrak f)^{-1} \cap D \mid (z)\mathfrak a_c \mathfrak f \in c \text{ or } c_nc\}$, $z \mapsto \nu_n z$.
We prove (ii).
In the former case, we have 
$\{z \in (\mathfrak a_c \mathfrak f)^{-1} \cap D \mid (z)\mathfrak a_c \mathfrak f \in c \text{ or } c_nc\}=
\{z \in (\mathfrak a_c \mathfrak f)^{-1} \cap D \mid (z)\mathfrak a_c \mathfrak f \in c\}$ by ($\ast$) in the proof of Lemma \ref{replace}.
Then we have $G_{\mathrm{fml}}(\{c,c_nc\},\iota;D,\mathfrak a_c)=G(c,\iota;D,\mathfrak a_c)$, 
$W_{\mathrm{fml}}(\{c,c_nc\},\iota;D,\mathfrak a_c)=W(c,\iota;D,\mathfrak a_c)$ by definition.
Furthermore we obtain $V_{\mathrm{fml}}(\{c,c_nc\},\iota;D,\mathfrak a_c)=V(c,\iota;D,\mathfrak a_c)$, as we noted after Definition \ref{defofvab}.
Thus the assertion follows in this case.
Next, we assume that $c\neq c_nc$ and that there exists an element $\epsilon_n \in \mathcal O^\times \cap F\otimes \mathbb R_{(n-1)+}$.
We put $Z:=\left[\sum_{z \in ((\nu_n)\mathfrak a_c \mathfrak f)^{-1} \cap \nu_n^{-1}\epsilon_n D,\, 
(z)(\nu_n)\mathfrak a_c \mathfrak f \in c_nc} \iota(z)^{-s}\right]_{s=0}$, which is equal to $\zeta(0,c_nc) \in \mathbb Q$ by (\ref{=zeta}).
Then by (\ref{dcmp}), we have
\begin{equation*}
\begin{split}
G(\{c,c_nc\},\iota;\alpha^{-1} D,(\alpha)\mathfrak a_c)&=G(c,\iota;D,\mathfrak a_c)+G(c_nc,\iota;\nu_n^{-1}\epsilon_n D,(\nu_n)\mathfrak a_c)
+\log \iota(\nu_n\epsilon_n^{-1}) \cdot Z, \\
W(\{c,c_nc\},\iota;\alpha^{-1} D,(\alpha)\mathfrak a_c)&=W(c,\iota;D,\mathfrak a_c)+W(c_nc,\iota;\nu_n^{-1}\epsilon_n D,(\nu_n)\mathfrak a_c)
-\log_\iota (\nu_n\epsilon_n^{-1}) \cdot Z.
\end{split}
\end{equation*}
By further using (\ref{alt}) and $\iota(Z)-\frac{1}{n}\mathrm{Tr}_{\iota(F)/\mathbb Q}\, Z=0$, we obtain 
\begin{equation*}
V_{\mathrm{fml}}(\{c,c_nc\},\iota;\alpha^{-1}D,(\alpha)\mathfrak a_c) =V(c,\iota; D,\mathfrak a_c)+V(c_nc,\iota;\nu_n^{-1}\epsilon_n D,(\nu_n)\mathfrak a_c).
\end{equation*}
Summing up the above, we obtain the desired formula.
\end{proof}

\begin{proof}[Proof of Theorem \ref{main}]
By reordering the embeddings $\iota_1,\dots,\iota_n$ of $F$ if necessary, we may assume that $j=n$, $i< n$. 
We take $\nu_n \in \mathcal O$ as in Lemma \ref{cpxcnj}, $D, \nu, X_t, \epsilon_t$ ($t \in T$) as in Lemma \ref{keylemma}, 
and integral ideals $\mathfrak a_d$ satisfying that $\mathfrak a_d\mathfrak f$ and $d$ belong to the same narrow ideal class for $d=c,c_nc$.
For real numbers $a(\iota),b(\iota)$ associated with real embeddings $\iota$ of $F$, we write $a(\iota) \sim b(\iota)$ 
if and only if 
there exist $\epsilon \in \mathcal O_+^\times$, $m \in \mathbb N$ satisfying $a(\iota) - b(\iota)=\frac{1}{m} \log \iota(\epsilon)$ 
for all $\iota \neq \iota_n$.
This is an equivalence relation.
Then Theorem \ref{main} states that 
\begin{equation} \label{statement}
X(c,\iota; D,\mathfrak a_c)+X(c_nc,\iota; D,\mathfrak a_{c_nc}) \sim 0.
\end{equation}
On the other hand, by Lemma \ref{keylemma}-(iii), (\ref{disj}), Lemma \ref{replace}-(iii), we can write 
\begin{multline} \label{sim0}
X_{\mathrm{fml}}(\{c,c_nc\},\iota; D \coprod \nu D,\mathfrak a_c) \\
=\sum_{t \in T} X_{\mathrm{fml}}(\{c,c_nc\},\iota; X_t,\mathfrak a_c) - \sum_{t \in T} X_{\mathrm{fml}}(\{c,c_nc\},\iota; \epsilon_t X_t,\mathfrak a_c) \sim 0.
\end{multline}
In the following of the proof, we show that (\ref{statement}) follows from (\ref{sim0}).
We consider two cases: 
\begin{itemize}
\item Case 1. When $c= c_nc$ or when $\mathcal O^\times \cap F\otimes \mathbb R_{(n-1)+}=\emptyset$.
\item Case 2. Cases other than Case 1. 
\end{itemize}
In Case 1, we can write 
\begin{equation*}
\begin{split}
X_{\mathrm{fml}}(\{c,c_nc\},\iota; D \coprod \nu D,\mathfrak a_c)
&=X_{\mathrm{fml}}(\{c,c_nc\},\iota; D,\mathfrak a_c)+X_{\mathrm{fml}}(\{c,c_nc\},\iota; \nu D,\mathfrak a_c) \\
&=X(c,\iota; D,\mathfrak a_c)+X_{\mathrm{fml}}(\{c,c_nc\},\iota; \nu D,\mathfrak a_c) \\
&\sim X(c,\iota; D,\mathfrak a_c)+X_{\mathrm{fml}}(\{c,c_nc\},\iota; \nu_n^{-2}\nu D,(\nu_n^2)\mathfrak a_c) \\
&\sim X(c,\iota; D,\mathfrak a_c)+X_{\mathrm{fml}}(\{c,c_nc\},\iota; \nu_n^{-1}\nu D,(\nu_n)\mathfrak a_c) \\
&=X(c,\iota; D,\mathfrak a_c)+X(c_nc,\iota; \nu_n^{-1}\nu D,(\nu_n)\mathfrak a_c) \\
&\sim X(c,\iota; D,\mathfrak a_c)+X(c_nc,\iota; D,\mathfrak a_{c_nc}) 
\end{split}
\end{equation*}
by using (\ref{disj}), Lemma \ref{replace3}-(ii), (\ref{last}), 
Lemma \ref{replace3}-(i) and (\ref{lia}), 
Lemma \ref{replace3}-(ii), Lemma \ref{replace2}, respectively.
In Case 2, we take an element $\epsilon_n \in \mathcal O^\times \cap F\otimes \mathbb R_{(n-1)+}$. 
Then we obtain  
\begin{equation} \label{case2-1}
X_{\mathrm{fml}}(\{c,c_nc\},\iota; D \coprod \nu D,\mathfrak a_c) 
=X_{\mathrm{fml}}(\{c,c_nc\},\iota; D,\mathfrak a_c)+X_{\mathrm{fml}}(\{c,c_nc\},\iota; \nu D,\mathfrak a_c) \\
\end{equation}
by (\ref{disj}), 
\begin{equation} \label{case2-2}
X_{\mathrm{fml}}(\{c,c_nc\},\iota; D,\mathfrak a_c)
\sim X(c,\iota;D,\mathfrak a_c)+X(c_nc,\iota;\nu_n^{-1}\epsilon_n D,(\nu_n)\mathfrak a_c)
\end{equation}
by Lemma \ref{replace3}-(ii) and (\ref{lia}), and 
\begin{equation} \label{case2-3}
\begin{split}
X_{\mathrm{fml}}(\{c,c_nc\},\iota; \nu D,\mathfrak a_c) &\sim X_{\mathrm{fml}}(\{c,c_nc\},\iota; \nu_n^{-2}\nu D,(\nu_n^2)\mathfrak a_c) \\
&\sim X_{\mathrm{fml}}(\{c,c_nc\},\iota; \nu_n^{-1}\nu D,(\nu_n)\mathfrak a_c) \\
&\sim X(c_nc,\iota;\nu_n^{-1}\nu D,(\nu_n)\mathfrak a_c)+X(c,\iota;\nu_n^{-2}\epsilon_n \nu D,(\nu_n^2)\mathfrak a_c)
\end{split}
\end{equation}
by (\ref{last}), Lemma \ref{replace3}-(i) and (\ref{lia}), Lemma \ref{replace3}-(ii) and (\ref{lia}), respectively.
Hence, by (\ref{case2-1}), (\ref{case2-2}), (\ref{case2-3}) and Lemma \ref{replace2}, we obtain 
\begin{equation*}
X_{\mathrm{fml}}(\{c,c_nc\},\iota; D \coprod \nu D,\mathfrak a_c) \sim 2X(c,\iota; D,\mathfrak a_c)+2X(c_nc,\iota; D,\mathfrak a_{c_nc}).
\end{equation*}
Therefore we see that (\ref{sim0}) implies (\ref{statement}) in both cases.
This completes the proof.
\end{proof}

\begin{crl} \label{maincor}
Let $K$ be a CM-field which is an abelian extension of a totally real field $F$,
$\rho$ the unique complex conjugation in $G:=\mathrm{Gal}(K/F)$, 
$\mathfrak f_{K/F}$ the conductor of $K/F$, 
$\mathfrak f$ an integral divisor with $\mathfrak f_{K/F}|\mathfrak f$,
and $\mathrm{Art}_\mathfrak f$ the composite map $C_\mathfrak f \rightarrow C_{\mathfrak f_{K/F}} \stackrel{\mathrm{Art}}{\rightarrow} G$.
Then we have
\begin{equation*}
\prod_{c \in \mathrm{Art}_\mathfrak f^{-1}(\tau)}\exp (X(c,\iota)) \cdot \prod_{c \in \mathrm{Art}_\mathfrak f^{-1}(\tau \rho)} \exp (X(c,\iota)) 
\in 
\begin{cases}
\iota(\mathcal O^\times_+)^\mathbb Q & (F\neq\mathbb Q), \\
(K^\times \cap \mathbb R_+)^\mathbb Q & (F= \mathbb Q), \\
\end{cases}
\end{equation*}
for any real embedding $\iota$ of $F$ and $\tau \in G$.
Here $K^\times \cap \mathbb R_+$ in the case $F= \mathbb Q$ is well-defined since $K$ is normal over $\mathbb Q$.
\end{crl}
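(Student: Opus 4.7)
The plan is to reduce the statement to the main theorem of the paper by exploiting the CM hypothesis. Because $K$ is totally complex, every real infinite place $\iota_i$ of $F$ ramifies in $K$, so the decomposition group at each $\iota_i$ is $\langle \rho \rangle$, the unique complex conjugation of $G$. In particular, for the elements $c_1,\dots,c_n \in C_\mathfrak f$ defined in \S3, the Artin map $\mathrm{Art}_\mathfrak f$ sends every $c_j$ to $\rho$. Since $G$ is abelian, multiplication by $c_j$ on $C_\mathfrak f$ induces a bijection $\mathrm{Art}_\mathfrak f^{-1}(\tau) \to \mathrm{Art}_\mathfrak f^{-1}(\tau\rho)$, $c \mapsto c_jc$. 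Reindexing gives
\begin{equation*}
\prod_{c \in \mathrm{Art}_\mathfrak f^{-1}(\tau)}\exp(X(c,\iota)) \cdot \prod_{c \in \mathrm{Art}_\mathfrak f^{-1}(\tau \rho)}\exp(X(c,\iota))
= \prod_{c \in \mathrm{Art}_\mathfrak f^{-1}(\tau)} \exp(X(c,\iota))\exp(X(c_jc,\iota)),
\end{equation*}
where $j$ is any index we are free to choose.

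Case $F \neq \mathbb Q$. Here $n \geq 2$, and given $\iota = \iota_i$ we can pick $j \neq i$. Theorem \ref{main} then says that each factor $\exp(X(c,\iota_i))\exp(X(c_jc,\iota_i))$ is of the form $\iota_i(\epsilon_c)^{1/m_c}$ with $\epsilon_c \in \mathcal O^\times_+$ and $m_c \in \mathbb N$. Taking the finite product over $c \in \mathrm{Art}_\mathfrak f^{-1}(\tau)$ stays inside $\iota_i(\mathcal O^\times_+)^\mathbb Q$, which is exactly the claimed conclusion.

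Case $F = \mathbb Q$. Now $n = 1$, so Theorem \ref{candec} collapses to $\zeta'(0,c) = X(c,\mathrm{id})$. Summing over the fibers and choosing $S$ with $\mathfrak f_S = \mathfrak f$,
\begin{equation*}
\prod_{c \in \mathrm{Art}_\mathfrak f^{-1}(\tau)}\exp(X(c,\mathrm{id})) \cdot \prod_{c \in \mathrm{Art}_\mathfrak f^{-1}(\tau \rho)}\exp(X(c,\mathrm{id}))
= \exp\bigl(\zeta_S'(0,\tau) + \zeta_S'(0,\tau\rho)\bigr).
\end{equation*}
Let $K^+ = K^\rho$ be the maximal totally real subfield and $\tilde\tau \in \mathrm{Gal}(K^+/\mathbb Q)$ the restriction of $\tau$. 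Since $\tau$ and $\tau\rho$ have the same restriction to $K^+$, the exponent is $\zeta'_{K^+/\mathbb Q,S}(0,\tilde\tau)$. Applying Conjecture \ref{wsc} to the totally real abelian extension $K^+/\mathbb Q$ (where it is a theorem, given by cyclotomic units), this value lies in $(\mathcal O_{K^+}^\times \cap \mathbb R_+)^\mathbb Q \subset (K^\times \cap \mathbb R_+)^\mathbb Q$.

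The only conceptual step is the Artin computation in the first paragraph — once one knows that every $c_j$ Artin-maps to the same element $\rho$, the corollary is essentially just the statement of Theorem \ref{main} summed over a fiber, with the degenerate case $F = \mathbb Q$ absorbing the classical Stark result. I do not anticipate a genuine obstacle; the main care needed is keeping track of which fibers of $\mathrm{Art}_\mathfrak f$ are being paired and verifying that the reindexing bijection does land in the correct coset.
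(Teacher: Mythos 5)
Your proof is correct and follows essentially the same route as the paper: reduce $\mathrm{Art}_{\mathfrak f}^{-1}(\tau\rho)$ to $c_j\,\mathrm{Art}_{\mathfrak f}^{-1}(\tau)$ using the CM hypothesis (which forces every $c_j$ to map to $\rho$), then apply Theorem~\ref{main} with a $j\neq i$ for $F\neq\mathbb Q$, and fall back on the proved case of Stark's conjecture for $(K\cap\mathbb R)/\mathbb Q$ when $F=\mathbb Q$. One tiny caveat in the $F=\mathbb Q$ case: when the integral divisor $\mathfrak f$ is supported at a single prime, the relevant set $S$ has $|S|=2$ and the Stark unit is only an $S$-unit rather than a unit, so the intermediate containment in $(\mathcal O_{K^+}^\times\cap\mathbb R_+)^\mathbb Q$ can fail, but the weaker containment in $(K^\times\cap\mathbb R_+)^\mathbb Q$ that you actually conclude is still valid.
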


\begin{proof}
Let $c_j \in C_\mathfrak f$ be as in Theorem \ref{main}.
Since $K$ is a CM-field, $\mathrm{Art}_\mathfrak f(c_j)$ are equal to $\rho$ for all $j$.
Therefore we have $\{c \in \mathrm{Art}_\mathfrak f^{-1}(\tau \rho)\}=\{c_j c \mid c \in \mathrm{Art}_\mathfrak f^{-1}(\tau)\}$.
If $[F:\mathbb Q]\geq 2$, then there exists $j$ satisfying $\iota\neq \iota_j$, so the assertion follows from Theorem \ref{main}.
If $F=\mathbb Q$, then $X(c,\mathrm{id})=\zeta'(0,c)$, 
so the assertion is a part of Stark's conjecture for $(K \cap \mathbb R)/\mathbb Q$, which has been proved.
\end{proof}

\section{Proof of Lemma \ref{keylemma}.} \label{proof}

We prove Lemma \ref{keylemma} in this section.
We take an arbitrary Shintani domain $D=\coprod_{j\in J}C(\mathbf v_j)$ 
with $\mathbf v_j=(v_{j1},\dots,v_{jr(j)}) \in \mathcal O_+^{r(j)}$,
and put $J_n:=\{j \in J \mid r(j)=n \}$. 
We denote the set of all faces of $n$-dimensional cones $C(\mathbf v_j)$ ($j \in J_n$) by $F(D)$. Namely,
\begin{equation*}
F(D):=\{C(v_{ji_1},v_{ji_2},\dots,v_{ji_r}) \mid  j\in J_n,\,1\leq r\leq n-1,\,1\leq i_1<i_2<\dots<i_r\leq n\}.
\end{equation*} 
Then we see that 
\begin{equation} \label{top}
F \otimes \mathbb R_{n+} = \bigcup_{\epsilon \in \mathcal O^\times_+,\ j \in J_n} \epsilon \overline{C(\mathbf v_j)}
= \left(\coprod_{\epsilon \in \mathcal O^\times_+,\ j \in J_n} \epsilon C(\mathbf v_j)\right) \coprod  
\left(\bigcup_{\epsilon \in \mathcal O^\times_+,\ C \in F(D)} \epsilon C\right),
\end{equation}
where $\overline{C(\mathbf v_j)}$ denotes the topological closure of $C(\mathbf v_j)$ in $F \otimes \mathbb R_{n+}$.
We may assume the following (replacing the cone decomposition $D=\coprod_{j\in J}C(\mathbf v_j)$ by its refinement, if necessary).
\begin{itemize}
\item[($\star$)] If $C,C'\in F(D)$ and $\epsilon \in \mathcal O^\times_+$ satisfy $(\epsilon C) \cap C' \neq \emptyset$, then $\epsilon C= C'$. 
\item[($\star\star$)] If $C\in F(D)$, $j \in J-J_n$ and $\epsilon \in \mathcal O^\times_+$ satisfy $C(\mathbf v_j) \cap \epsilon C \neq \emptyset$, then 
$C(\mathbf v_j) \subset \epsilon C$.
\end{itemize}
In the following, we shall replace lower-dimensional cones $C(\mathbf v_j)$ ($j \in J-J_n$) so that 
$D$ can be expressed as the disjoint union of $n$-dimensional cones $C(\mathbf v_j)$ ($j \in J_n$) 
and their faces on ``the upper side'' with respect to the $x_n$-axis.
First we define which face is on ``the upper side'':
Consider the hyper plane $P:=\{(x_1,\dots,x_n) \in \mathbb R^n \mid x_n=0\}$.
We denote the line passing through two points $z,z' \in \mathbb R^n$ ($z\neq z'$) by $L(z,z'):=\{sz+(1-s)z' \mid s \in \mathbb R\}$.
Take $\nu \in F \cap F\otimes \mathbb R_{(n-1)+}$ such that $|\iota_1(\nu)-1|,|\iota_2(\nu)-1|,\dots,|\iota_{n-1}(\nu)-1|,|\iota_n(\nu)+1|$ are sufficiently small.
Then we may assume that for any $C=C(v_1,\dots,v_r) \in F(D)$, 
the angles between $L(v_i,\nu v_i)$ and $P$ ($i=1,\dots,r$) are closer to a right angle than that between $C$ and $P$.
(Note that $C \in F(D)$ and $P$ do not meet at a right angle since the basis of $C$ is in $F$.)
In particular, we see that for any $C \in F(D), z \in C$, the line $L(z,\nu z)$ is not contained in $C$. 
Namely, the assumption on $\nu$ implies that 
\begin{equation} \label{intprop}
L(z,\nu z) \cap C=\{z\} \qquad (C \in F(D),\ z \in C). 
\end{equation}
Actually, the following argument works whenever $\nu \in F \cap F\otimes \mathbb R_{(n-1)+}$ satisfies (\ref{intprop}).

\begin{prp} \label{atmostpt}
Let $\nu,D=\coprod_{j\in J}C(\mathbf v_j)$ be as above. Then $L(z,\nu z) \cap \epsilon C(\mathbf v_j)$ is a one-point set or the empty set 
for any $z \in F_+$, $j \in J-J_n$, $\epsilon \in \mathcal O^\times_+$.
\end{prp}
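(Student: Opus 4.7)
My plan is to argue by contradiction: suppose $L(z,\nu z) \cap \epsilon C(\mathbf v_j)$ contains two distinct points $w_1, w_2$. By convexity of the open simplicial cone $\epsilon C(\mathbf v_j)$, the segment $[w_1,w_2]$ lies entirely in $\epsilon C(\mathbf v_j)$. Since $\epsilon C(\mathbf v_j)$ is contained in its own $r(j)$-dimensional linear span $V' := \mathrm{span}_{\mathbb R}(\epsilon v_{j1},\ldots,\epsilon v_{jr(j)})$, and $r(j)<n$, any line sharing two distinct points with the subspace $V'$ must lie entirely in $V'$; hence $L(z,\nu z)\subset V'$, and in particular $z,\nu z\in V'$.

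Next I would use assumption $(\star\star)$ together with the disjointness in (\ref{top}) to embed $C(\mathbf v_j)$ (for $j\in J-J_n$) inside a single face orbit: $C(\mathbf v_j)\subset \epsilon_0 C_0$ for some $C_0\in F(D)$ and $\epsilon_0\in\mathcal O^\times_+$. Consequently $\epsilon C(\mathbf v_j)\subset \epsilon\epsilon_0 C_0$, and $V'$ sits in the span $V''$ of $\epsilon\epsilon_0 C_0$. The parenthetical remark accompanying (\ref{intprop}) shows that the span $V_{C_0}$ of any face $C_0\in F(D)$ does not contain $e_n$; componentwise multiplication by the totally positive $\epsilon\epsilon_0$ preserves this property, so $e_n\notin V''$ and therefore $e_n\notin V'$ either.

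Now pick an interior point $w$ of the open segment $(w_1,w_2)$. Since $w\in \epsilon\epsilon_0 C_0$, a rescaled version of (\ref{intprop}) gives $L(w,\nu w)\cap \epsilon\epsilon_0 C_0=\{w\}$. If $\nu w$ lay in $V''$, then $L(w,\nu w)\subset V''$, and since $w$ is interior to the open cone $\epsilon\epsilon_0 C_0$ relative to $V''$, a small open segment of $L(w,\nu w)$ through $w$ would lie inside $\epsilon\epsilon_0 C_0$, contradicting the single-point conclusion. Hence $\nu w\notin V''$; writing $w=tz+(1-t)\nu z$ with $t\neq 1$ and using $\nu z\in V''$, this forces $\nu^2 z\notin V''$.

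The remaining step, which I expect to be the main technical hurdle, is to obtain the final contradiction with $z,\nu z\in V''$ and $e_n\notin V''$ under the closeness assumption on $\nu$. At the formal limit $\nu_0=(1,\ldots,1,-1)$, the conditions $z,\nu_0 z\in V''$ force $z\in V''\cap P$ (using $e_n\notin V''$), which excludes any totally positive $z$ since $F_+\cap P=\emptyset$. A continuity/compactness argument over the finite collection of subspaces $V''$ arising from $F(D)$ and $J-J_n$ (up to the $\mathcal O^\times_+$-action) should then transfer this impossibility to all $\nu\in F$ that are close enough to $\nu_0$. Making this passage from the formal reflection $\nu_0\notin F$ to a genuine $\nu\in F$ uniform over the relevant subspaces — presumably by sharpening the angle hypothesis that underlies (\ref{intprop}) — is the delicate point of the argument.
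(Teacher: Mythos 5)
Your reduction is on the right track. You correctly argue that two distinct points in $L(z,\nu z)\cap\epsilon C(\mathbf v_j)$ force $L(z,\nu z)\subset V''$ (hence $z,\nu z\in V''$), you correctly use $(\star\star)$ to embed the cone in a face orbit $\epsilon\epsilon_0 C_0$, and you correctly apply the rescaled (\ref{intprop}) to conclude $\nu w\notin V''$ for a point $w$ of the segment. But, as you yourself acknowledge, the argument does not close: from $z,\nu z\in V''$ and $\nu w\notin V''$ you extract only $\nu^2 z\notin V''$, which is not a contradiction. Nor does the compactness sketch at the end obviously finish the job: the direction of $(\nu-1)w$ is near $\pm e_n$ only to the extent that $\lvert w\rvert/\iota_n(w)$ is controlled, and that ratio is not uniformly bounded on $V''\cap F_+$ (this open cone has no compact unit sphere, and the ratio blows up as $w$ approaches $V''\cap P$), so passing from the formal reflection $(1,\dots,1,-1)$ to a nearby $\nu\in F$ is genuinely delicate and not a routine openness/compactness argument. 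The last step is therefore a real gap, not just an omitted detail.

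A caution about trying to close that gap by imitating the paper's own (much shorter) argument. The paper picks one point $z_1\in L(z,\nu z)\cap\epsilon C(\mathbf v_j)$, uses $(\star\star)$ to get $\epsilon C(\mathbf v_j)\subset\epsilon_1 C$ with $C\in F(D)$, sets $z_2:=\epsilon_1^{-1}z_1\in C$ so that (\ref{intprop}) gives $L(z_2,\nu z_2)\cap C=\{z_2\}$, and then concludes via the asserted identity $\epsilon_1^{-1}L(z,\nu z)=L(z_2,\nu z_2)$, which amounts to $L(z_1,\nu z_1)=L(z,\nu z)$. You should not take that identity for granted: writing $z_1=(s_1+(1-s_1)\nu)z$ gives $\nu z_1=(s_1\nu+(1-s_1)\nu^2)z$, and comparing $\{(s+(1-s)\nu)z_1 : s\in\mathbb R\}$ with $\{(t+(1-t)\nu)z : t\in\mathbb R\}$ one checks that the two lines coincide only when $s_1=1$ (i.e.\ $z_1=z$), unless $1$ is a root of the minimal polynomial of $\nu$ over $\mathbb Q$, which is excluded here since $\iota_n(\nu)<0<\iota_1(\nu)$. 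So $L(z_1,\nu z_1)$ is genuinely a different line from $L(z,\nu z)$ once $z_1\ne z$, and the paper's one-line reduction does not supply the ingredient your proposal is missing; a true transversality argument, in which the hypothesis on $\nu$ is strong enough to control simultaneously all finitely many face subspaces $V_C$ and their $\mathcal O^\times_+$-translates (not merely (\ref{intprop}) for $z\in C$), appears to be what is actually needed.
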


\begin{proof}
Assume that there exists an element $z_1 \in L(z,\nu z) \cap \epsilon C(\mathbf v_j)$.
By (\ref{top}) and ($\star\star$), there exist $\epsilon_1 \in \mathcal O^\times_+$ and $C \in F(D)$ satisfying 
$\epsilon C(\mathbf v_j) \subset \epsilon_1 C$.
Put $z_2:=\epsilon_1^{-1} z_1$. Then $z_2 \in C$, so $L(z_2,\nu z_2) \cap C$ is a one-point set by (\ref{intprop}).
We easily see that $L(z_2,\nu z_2)=\epsilon_1^{-1} L(z_1,\nu z_1)=\epsilon_1^{-1} L(z,\nu z)$,
$L(z,\nu z) \cap \epsilon C(\mathbf v_j) \subset L(z,\nu z) \cap \epsilon_1 C=\epsilon_1(\epsilon_1^{-1}L(z,\nu z) \cap  C)$.
Hence we have $L(z,\nu z) \cap \epsilon C(\mathbf v_j) \subset \epsilon_1(L(z_2,\nu z_2) \cap  C)$.
Then the assertion is clear.
\end{proof}

\begin{dfn}
Let $C(\mathbf v)$ be an $n$-dimensional cone with the basis $\mathbf v=(v_1,\dots,v_n) \in \mathcal O^n$.
We say that a face $C=C(v_{i_1},v_{i_2},\dots,v_{i_r})$ ($1\leq r\leq n-1$, $1\leq i_1<i_2<\dots<i_r\leq n$) of $C(\mathbf v)$ 
is on the upper side (resp.\ on the lower side) of $C(\mathbf v)$ 
if and only if for each $z \in C$, there exists $\delta>0$ satisfying 
$\{s\nu z+(1-s)z \mid 0<s<\delta \} \subset C(\mathbf v)$ (resp.\ $\{s\nu z+(1-s)z \mid -\delta<s<0 \} \subset C(\mathbf v)$).
\end{dfn}

We put $\{C(\mathbf v_t) \mid t \in T\}$ to be the set of all faces $\in F(D)$ which are on the upper side of some $C(\mathbf v_j)$ with $j \in J_n$.
We replace $D$ by 
\begin{equation*}
D_0:=\left(\coprod_{j \in J_n}C(\mathbf v_j)\right) \coprod \left(\coprod_{t \in T}C(\mathbf v_t)\right).
\end{equation*}

\begin{prp}
The above $D_0$ is again a Shintani domain.
\end{prp}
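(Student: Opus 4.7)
The plan is to verify directly the two defining conditions of a Shintani domain for $D_0$: that it is a finite disjoint union of cones with totally positive bases, and that $F \otimes \mathbb R_{n+} = \coprod_{\epsilon \in \mathcal O^\times_+} \epsilon D_0$. The finite disjoint union part is almost automatic: the $n$-dimensional cones $C(\mathbf v_j)$, $j \in J_n$, are pairwise disjoint because $D$ itself is a Shintani domain; each such open $n$-dimensional cone is disjoint from every lower-dimensional face (which lies in its topological boundary); and for distinct $C, C' \in F(D)$, setting $\epsilon = 1$ in $(\star)$ yields $C \cap C' = \emptyset$. So the $C(\mathbf v_t)$, $t \in T$, are pairwise disjoint and disjoint from the $n$-dimensional part.

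Next, using the splitting (\ref{top}), the tiling property reduces to proving the set-theoretic equality
\begin{equation*}
\bigcup_{\epsilon \in \mathcal O^\times_+,\, C \in F(D)} \epsilon C \; = \; \coprod_{\epsilon \in \mathcal O^\times_+,\, t \in T} \epsilon C(\mathbf v_t),
\end{equation*}
where the right-hand side is claimed to be a disjoint union. The strategy for this is to construct an \emph{upper neighbor map} $\phi$ from $F(D)$ to $n$-dimensional cones in the tiling $\{\epsilon C(\mathbf v_j) : \epsilon \in \mathcal O^\times_+,\, j \in J_n\}$ as follows. For $C \in F(D)$ and $z \in C$, the line $L(z, \nu z)$ meets $C$ only at $z$ by (\ref{intprop}), meets each unit-translate of a lower-dimensional cone in $D$ in at most one point by Proposition \ref{atmostpt}, and by the angle condition on $\nu$ is transverse to (hence meets in at most one point) every unit-translate of a face in $F(D)$. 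By local finiteness of the cone decomposition, for sufficiently small $s > 0$ the point $z + s(\nu z - z)$ avoids all lower-dimensional cones, so by (\ref{top}) it lands in a unique $n$-dimensional cone $\phi(C) = \epsilon_C C(\mathbf v_{j_C})$. Because $\nu(\epsilon z) = \epsilon (\nu z)$, the property ``face on the upper side of an $n$-dimensional cone'' is preserved under simultaneous unit translation; hence $\epsilon_C^{-1} C$ is a face of $C(\mathbf v_{j_C})$ on its upper side, so $\epsilon_C^{-1} C = C(\mathbf v_t)$ for a unique $t = t(C) \in T$. This gives the $\subset$ inclusion (the $\supset$ is immediate since $T$ indexes a subset of $F(D)$).

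For the disjointness of the right-hand side, suppose $\epsilon_1 C(\mathbf v_{t_1}) \cap \epsilon_2 C(\mathbf v_{t_2}) \ne \emptyset$. Then $(\star)$ forces $\epsilon_1 C(\mathbf v_{t_1}) = \epsilon_2 C(\mathbf v_{t_2})$. Each $C(\mathbf v_{t_i})$ is on the upper side of some $C(\mathbf v_{j_i})$ with $j_i \in J_n$, so by equivariance $\epsilon_1 C(\mathbf v_{t_1}) = \epsilon_2 C(\mathbf v_{t_2})$ is on the upper side of both $\epsilon_1 C(\mathbf v_{j_1})$ and $\epsilon_2 C(\mathbf v_{j_2})$. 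The uniqueness of the upper neighbor map $\phi$ forces $\epsilon_1 C(\mathbf v_{j_1}) = \epsilon_2 C(\mathbf v_{j_2})$, and disjointness of the $n$-dimensional tiling gives $\epsilon_1 = \epsilon_2$ and $j_1 = j_2$, whence $t_1 = t_2$.

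The main obstacle is the geometric setup for $\phi$: one must argue that for $z$ in the relative interior of $C$, the initial segment $\{z + s(\nu z - z) : 0 < s < \delta\}$ misses every unit-translate of every lower-dimensional cell (both the $C(\mathbf v_j)$ for $j \in J - J_n$ and the unit-translates $\epsilon C'$ of faces $C' \in F(D)$). This requires combining (\ref{intprop}), Proposition \ref{atmostpt}, the transversality built into the choice of $\nu$, and the local finiteness of the cone decomposition. Once $\phi$ is well-defined, the remaining verification is essentially bookkeeping on the $\mathcal O^\times_+$-orbits of faces, with the upper-side condition serving as a canonical section of each orbit.
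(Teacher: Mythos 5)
Your proof is correct and follows essentially the same route as the paper: both use the partition (\ref{top}), identify for each boundary point $z$ the unique $n$-dimensional cone $\epsilon C(\mathbf v_j)$, $j\in J_n$, that the segment $\{s\nu z+(1-s)z: 0<s<\delta\}$ immediately enters (via Proposition \ref{atmostpt}, (\ref{intprop}), and compactness/local finiteness), and then recognize $z$ as lying on an upper-side face of that cone. The only cosmetic difference is that you package the argument as an ``upper neighbor map'' $\phi$ on faces in $F(D)$, whereas the paper runs the same argument pointwise on $z$ and derives existence and uniqueness of the representation $z=\epsilon z'$ directly.
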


\begin{proof}
Let $z \in F \otimes \mathbb R_+-\coprod_{\epsilon \in \mathcal O^\times_+,\ j \in J_n}\epsilon C(\mathbf v_j)$.
It suffices to show that there exist unique $t \in T$, $z' \in C(\mathbf v_t)$, $\epsilon \in \mathcal O^\times_+$ satisfying $z=\epsilon z'$.
Take $\delta >0$ so that $I:=\{s\nu z+(1-s)z \mid 0\leq s \leq \delta\} \subset F\otimes \mathbb R_{n+}$.
Then we can write $I \subset \coprod_{k=1}^m \epsilon_k C(\mathbf v_{j_k})$ with $j_k \in J$, $\epsilon_k \in \mathcal O^\times_+$, $m \in \mathbb N$.
Replacing $\delta$ by a smaller positive number if necessary, we may assume that $m=2$. 
Then we can write 
$z \in \epsilon_1 C(\mathbf v_{j_1})$, $\{s\nu z+(1-s)z \mid 0< s < \delta\} \subset \epsilon_2 C(\mathbf v_{j_2})$
with $j_1 \in J-J_n$, $j_2 \in J_n$, $\epsilon_1,\epsilon_2 \in \mathcal O^\times_+$ by Proposition \ref{atmostpt}.
Therefore $z':=\epsilon_2^{-1} z$ is contained in a face on the upper side of $C(\mathbf v_{j_2})$, as desired.
In order to prove the uniqueness, 
assume that $t_k \in T$, $z_k \in C(\mathbf v_{t_k})$ ($k=1,2$), $\epsilon \in \mathcal O^\times_+$ satisfy $z_1=\epsilon z_2$.
Then there exist $\delta_k>0$ and $j_k \in J_n$ satisfying $\{s\nu z_k+(1-s)z_k \mid 0< s < \delta_k\} \subset C(\mathbf v_{j_k})$ ($k=1,2$).
In particular we have $C(\mathbf v_{j_1}) \cap \epsilon C(\mathbf v_{j_2}) \neq \emptyset$.
Since $\coprod_{j \in J_n,\ \epsilon \in \mathcal O^\times_+} \epsilon C(\mathbf v_j)$ is a disjoint union,
we have $C(\mathbf v_{j_1}) =C(\mathbf v_{j_2})$, $\epsilon=1$.
Hence we obtain $z_1=z_2$. This completes the proof.
\end{proof}

By definition, for $t \in T$, there exists a unique $j(t)\in J_n$ satisfying that $C(\mathbf v_t)$ is on the upper side of $C(\mathbf v_{j(t)})$.
On the other hand, for $t \in T$, there exist a unique $j(t)'\in J_n$ and a unique $\epsilon(t) \in \mathcal O^\times_+$ 
satisfying that $\epsilon(t)C(\mathbf v_t)$ is on the lower side of $C(\mathbf v_{j(t)'})$.
This follows by a similar argument to the above proof,
by using the line $\{s\nu z+(1-s)z \mid -\delta < s <0 \}$ and the assumption $(\star)$.
Put $X_t:=\{s\nu z + (1-s)z \mid z \in C(\mathbf v_t),\,0\leq s \leq 1\}$, $Y_t:=\epsilon(t) X_t$ for $t \in T$.
We write $\mathbf v_t=:(v_1,\dots,v_r)$, and take $n \in \mathbb N$ so that $n\nu \in \mathcal O$.
Then $X_t$ can be expressed as a finite disjoint union of cones whose bases are subsets of $\{v_1,\dots,v_r,n \nu v_1,\dots,n \nu v_r\}$.
Now we shall show that 
\begin{equation*}
\left(D_0 \coprod \nu D_0\right) \biguplus  \left(\biguplus_{t \in T} Y_t\right)=\biguplus_{t \in T} X_t.
\end{equation*}
For $j \in J_n$, we put $T_j:=\{t \in T \mid j(t)=j\}$, $T_j':=\{t \in T \mid j(t)'=j\}$, 
and $D_{0,j}:=C(\mathbf v_j) \coprod (\coprod_{t \in T_j} C(\mathbf v_t))$.
Then it suffices to show that 
\begin{equation} \label{decomp}
D_{0,j} \coprod \nu D_{0,j} \coprod \left(\coprod_{t \in T_j'} Y_t\right)=\coprod_{t \in T_j}X_t \qquad(j \in J_n).
\end{equation}
Here we note that
\begin{itemize}
\item $\{C(\mathbf v_t) \mid t \in T_j \}$ is the set of all faces on the upper side of $C(\mathbf v_j)$.
\item $\{\epsilon(t)C(\mathbf v_t) \mid t \in T_j' \}$ is the set of all faces on the lower side of $C(\mathbf v_j)$.
\item $\{\nu C(\mathbf v_t) \mid t \in T_j \}$ is the set of all faces on the lower side of $\nu C(\mathbf v_j)$.
\item $\{\epsilon(t)\nu C(\mathbf v_t) \mid t \in T_j' \}$ is the set of all faces on the upper side of $\nu C(\mathbf v_j)$.
\end{itemize}
We also note that $\nu C(\mathbf v_j)$ is almost equal to the ``mirror image'' of $C(\mathbf v_j)$ with respect to the hyperplane $P$.
Let $z_0 \in C(\mathbf v_{t_0})$ with $t_0 \in T_j$. 
Then the line segment $\{s\nu z_0 + (1-s)z_0 \mid 0\leq s \leq 1\}$ contains 
the point $z_0$ in the face $C(\mathbf v_{t_0})$ on the upper side of $C(\mathbf v_j)$, 
a point $z_1$ in a face on the lower side of $C(\mathbf v_j)$, 
a point $z_2$ in a face on the upper side of $\nu C(\mathbf v_j)$, 
and the point $\nu z_0$ in the face $\nu C(\mathbf v_j)$ on the lower side of $\nu C(\mathbf v_j)$.
We write $z_1=s_1\nu z + (1-s_1)z$, $z_2=s_2\nu z + (1-s_2)z$ with $0< s_1 < s_2 < 1$.
Then we can decompose $\{s\nu z_0 + (1-s)z_0 \mid 0\leq s \leq 1\} \subset X_{t_0}$ into 
\begin{multline*}
\{z_0\} \coprod \{s\nu z_0 + (1-s)z_0 \mid 0< s < s_1\} \coprod \{s\nu z_0 + (1-s)z_0 \mid s_1 \leq  s \leq s_2\} \\
\coprod \{s\nu z_0 + (1-s)z_0 \mid s_2< s < 1\} \coprod \{\nu z_0\},  
\end{multline*}
which are subsets of 
$\coprod_{t \in T_j} C(\mathbf v_t)$, $C(\mathbf v_j)$, $\coprod_{t \in T_j'} Y_t$, $\nu C(\mathbf v_j)$, $\nu \coprod_{t \in T_j} C(\mathbf v_t)$, respectively.
Hence we obtain the decomposition (\ref{decomp}).
This completes the proof of Lemma \ref{keylemma}.

\section{A relation between CM-periods and Stark units.}

In this section, we discuss a relation between monomial relations among $\exp(X(c,\iota))$'s, those among CM-periods, 
and Yoshida's conjecture on Shimura's period symbol $p_K$.
We note that some results (Propositions \ref{trans}, \ref{mainprp}, \ref{mainprp2}) are applications of Theorem \ref{main}.
First we recall some properties of $p_K$.

\begin{thm}[{\cite[Theorem 32.5]{Shim}}] \label{sps}
Let $K$ be a CM-field, $J_K$ the set of all embeddings of $K$ into $\mathbb C$, and 
$I_K:=\bigoplus_{\sigma \in J_K} \mathbb Z \sigma$ the free abelian group generated by elements in $J_K$. 
We denote the complex conjugation by $\rho$.
Then there exists a bilinear map $p_K\colon I_K \times I_K \rightarrow \mathbb C^\times/\overline{\mathbb Q}^\times$ satisfying the following properties.
\begin{enumerate}
\item Let $\Phi=\sum_{i=1}^{[K:\mathbb Q]/2} \sigma_i$ be a CM-type of $K$ (i.e., $\sigma_i \in J_K$, $\Phi+\Phi\rho=\sum_{\sigma \in J_K} \sigma$). 
Then $p_K(\sigma_i,\Phi)$ ($i=1,\dots,[K:\mathbb Q]/2$) are given as follows:
Let $A$ be an abelian variety with CM of type $(K,\Phi)$. 
Namely, for $i=1,\dots,[K:\mathbb Q]/2$, there exists a non-zero holomorphic differential $1$-form $\omega_i$ on $A$ 
where each $a \in K \cong \mathrm{End}(A)\otimes_\mathbb Z \mathbb Q$ acts as scalar multiplication by $\sigma_i(a)$.
We assume that $A,\omega_i$ are defined over $\overline{\mathbb Q}$.
Then we have
\begin{equation*}
\int_c \omega_i \bmod \overline{\mathbb Q}^\times =\pi p_K(\sigma_i,\Phi)
\end{equation*}
for every $c \in H_1(A(\mathbb C),\mathbb Z)$ with $\int_c \omega_i \neq 0$.
\item $p_K(\xi \rho ,\eta) p_K(\xi,\eta \rho)=p_K(\xi,\eta)^{-1}$ for $\xi,\eta \in I_K$.
\item Let $L$ also be a CM-field which is an extension of $K$. 
Then $p_K(\xi ,\mathrm{Res}(\zeta))=p_L(\mathrm{Inf}(\xi),\zeta)$, 
$p_K(\mathrm{Res}(\zeta),\xi)=p_L(\zeta,\mathrm{Inf}(\xi))$ for $\xi \in I_K$, $\zeta \in I_L$.
Here linear maps $\mathrm{Res}\colon I_L \rightarrow I_K$, $\mathrm{Inf}\colon I_K \rightarrow I_L$ are defined by 
$\mathrm{Res}(\sigma):=\sigma|_K$ ($\sigma  \in J_L$), 
$\mathrm{Inf}(\sigma):=\sum_{\tau \in J_L,\ \tau|_K=\sigma}\tau$ ($\sigma \in J_K$) respectively. 
\item Let $K'$ also be a CM-field with an isomorphism $\gamma\colon K' \cong K$. 
Then $p_{K'}(\gamma \xi,\gamma \eta)=p_K(\xi,\eta)$ for $\xi,\eta \in I_K$.
\end{enumerate}
\end{thm}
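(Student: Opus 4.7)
The plan is to define $p_K$ on the distinguished generators $(\sigma_i,\Phi)$ with $\sigma_i\in\Phi$ by using property (i) as the \emph{defining} condition, extend the resulting values bilinearly to $I_K\times I_K$, and then derive the functional equation (ii) and the functoriality statements (iii)--(iv) from the intrinsic geometry of CM abelian varieties.

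First I would construct, for each CM-type $(K,\Phi)$ with $\Phi=\{\sigma_1,\dots,\sigma_g\}$, $g=[K:\mathbb{Q}]/2$, the abelian variety $A_\Phi$ obtained from the complex torus $\mathbb{C}^g/\Phi(\mathfrak{a})$ (for a lattice $\mathfrak{a}\subset K$) equipped with a Riemann form coming from a totally imaginary $\xi\in K^\times$ with $\mathrm{Im}\,\sigma_i(\xi)>0$. By the main theorem of complex multiplication, $A_\Phi$ descends to $\overline{\mathbb{Q}}$, and the coordinate differentials $\omega_i$ attached to $\sigma_i$ can also be taken $\overline{\mathbb{Q}}$-rational. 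I would then \emph{define}
\begin{equation*}
p_K(\sigma_i,\Phi):=\pi^{-1}\int_{c_i}\omega_i\bmod\overline{\mathbb{Q}}^\times
\end{equation*}
for any nonzero $c_i\in H_1(A_\Phi(\mathbb{C}),\mathbb{Z})$. Well-definedness (independence of $\mathfrak{a}$, of the $\overline{\mathbb{Q}}$-model, of $\omega_i$ up to $\overline{\mathbb{Q}}^\times$, and of $c_i$) follows because any two CM abelian varieties of type $(K,\Phi)$ are $\overline{\mathbb{Q}}$-isogenous and $K$-equivariant isogenies scale periods by elements of $\overline{\mathbb{Q}}^\times$.

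Having defined $p_K$ on generators, I would extend by bilinearity and then verify the three functorial properties. For (ii), the $\overline{\mathbb{Q}}$-rational polarization on $A_\Phi$ together with the Riemann bilinear relations pairs $\omega_\sigma$ and $\omega_{\sigma\rho}$ up to $\overline{\mathbb{Q}}^\times$, which yields $p_K(\sigma,\Phi)p_K(\sigma\rho,\Phi\rho)\equiv 1\bmod\overline{\mathbb{Q}}^\times$ and, after rewriting in terms of complementary CM-types, the stated identity. For (iii), if $L\supset K$ is a CM-field, then $A_\Phi\otimes_K L$ has CM-type $(L,\mathrm{Inf}(\Phi))$, and the coordinate differentials on the two sides match under base change; tracing through the definitions gives both Res/Inf compatibilities. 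Property (iv) is immediate from the fact that the construction of $A_\Phi$ depends only on the isomorphism class of the pair $(K,\Phi)$.

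The main obstacle, as often in this circle of ideas, is the \emph{coherence of the bilinear extension}: different CM-types $\Phi,\Phi'$ both containing a given $\sigma$ must yield values $p_K(\sigma,\Phi),p_K(\sigma,\Phi')$ whose ratio is correctly predicted by the polarization identity (ii). Concretely, when $\Phi'$ is obtained from $\Phi$ by swapping some $\tau\neq\sigma,\sigma\rho$ with $\tau\rho$, one must compare periods of the two $\overline{\mathbb{Q}}$-isogenous abelian varieties $A_\Phi$ and $A_{\Phi'}$ and check that the ratio is algebraic modulo the expected $p_K$-expression. This consistency check is the technical heart of Shimura's argument in \cite{Shim}, and reduces ultimately to the transcendence-free part of the theory of complex multiplication combined with the Riemann relations used in step (ii).
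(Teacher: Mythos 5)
This theorem is not proved in the paper at all; it is quoted as a black box from Shimura's book (Theorem 32.5 of \cite{Shim}), so there is no in-paper proof against which to compare your sketch. Judged on its own, the sketch has a genuine gap exactly where you yourself flag one, and the gap is not of the kind that can be patched along the route you describe. Property~(i) only pins down the combinations $p_K(\sigma,\Phi)=\sum_{\tau\in\Phi}p_K(\sigma,\tau)$ for $\sigma$ an embedding and $\Phi$ a full CM-type; the pairs $(\sigma,\Phi)$ are not a basis of $I_K\times I_K$, and these constraints are simultaneously overdetermined (many $\Phi$ containing a given $\sigma$) and underdetermined (the individual $p_K(\sigma,\tau)$ are never isolated). ``Define on generators, extend by bilinearity, then check coherence'' therefore presupposes the theorem: the existence of \emph{any} bilinear $p_K$ satisfying (i) \emph{is} the theorem, and Shimura's construction produces the individual $p_K(\sigma,\tau)$ by a different mechanism (a chosen basic CM-type, reflex-norm functoriality, and factorization of period ratios), with bilinearity emerging as a theorem about those quantities rather than as a formal linear extension. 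Your last paragraph correctly locates this as ``the technical heart,'' but an existence proof cannot legitimately defer its own existence claim to a ``consistency check.''

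Two secondary inaccuracies are worth noting. For (ii) you invoke the Riemann relations ``pairing $\omega_\sigma$ and $\omega_{\sigma\rho}$'' on $A_\Phi$; but if $\sigma\in\Phi$ then $\sigma\rho\notin\Phi$, so $\omega_{\sigma\rho}$ is an antiholomorphic form on $A_\Phi$, and its period genuinely lives on the conjugate variety $A_{\Phi\rho}$. The identity (ii) is a statement comparing two different $\overline{\mathbb Q}$-models, not a Riemann relation internal to one model. For (iii), ``$A_\Phi\otimes_K L$'' is not an abelian variety with CM by $L$ of type $\mathrm{Inf}(\Phi)$ -- the dimensions differ by a factor of $[L:K]$, and the comparison must go through an isogeny between $A_\Phi^{[L:K]}$ and $A_{\mathrm{Inf}(\Phi)}$, chasing $\omega$ through it. Neither point is fatal by itself, but together with the main gap they show the sketch is an outline of what must be proved rather than a proof.
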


Let $K$ be a CM-field which is an abelian extension of a totally real field $F$.
We put $\mathfrak f_{K/F}$ to be the conductor and $G:=\mathrm{Gal}(K/F)$.
Hereinafter we take $\iota,D,\mathfrak a_c$ for Yoshida's invariant $X(c,\iota;D,\mathfrak a_c)$ in the following manner:
We regard a number field as a subfield of $\mathbb C$.
Then $\iota:=\mathrm{id} \in J_F$ has a meaning.
Fix a Shintani domain $D$ of $F$ and a complete system of representatives $\{\mathfrak a_\mu\}_{\mu}$ of the narrow ideal class group of $F$.
We assume that all of $\mathfrak a_\mu$ are integral ideals. 
We choose $\mathfrak a_c$ (satisfying that $\mathfrak a_c \mathfrak f$ and $c$ belong to the same narrow ideal class) 
from among $\{\mathfrak a_\mu\}_{\mu}$, and put $X(c):=X(c,\mathrm{id}; D,\mathfrak a_c)$.
Then Yoshida's absolute period symbol \cite[Chapter III, (3.10)]{Yo} is defined by
\begin{equation*}
g_K(\mathrm{id},\tau):=\pi^{-\mu(\tau)/2} \exp(\frac{1}{|G|} \sum_{\mathfrak f \mid \mathfrak f_{K/F}}\sum_{\chi \in (\hat G_-)_\mathfrak f}
\frac{\chi(\tau)}{L(0,\chi)} \sum_{c \in C_\mathfrak f} \chi(c) X(c))
\qquad (\tau \in G).
\end{equation*}  
Here we put $\mu(\tau):=1,-1,0$ if $\tau=\mathrm{id},\rho$, otherwise, respectively.
For an integral divisor $\mathfrak f$, we denote by $(\hat G_-)_{\mathfrak f}$ the set of all odd characters of $G$ whose conductors are equal to $\mathfrak f$.
Then we may regard each $\chi \in (\hat G_-)_{\mathfrak f}$ as a character of $C_\mathfrak f$.

\begin{cnj}[{\cite[Chapter III, Conjecture 3.9]{Yo}}] \label{YC}
Let $K,F,G$ be as above. Then we have
\begin{equation*}
g_K(\mathrm{id},\tau) \bmod \overline{\mathbb Q}^\times=p_K(\mathrm{id},\tau) \qquad (\tau \in G).
\end{equation*}
\end{cnj}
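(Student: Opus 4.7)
The plan is to attack Conjecture \ref{YC} by using Theorem \ref{main} to rewrite $g_K(\mathrm{id},\tau)$ in terms of (products of) $\exp(\zeta'(0,c))$, moving from the embedding-sensitive invariants $X(c,\mathrm{id})$ to the full partial zeta derivatives, and then to compare these with Shimura's period symbol via known period identities.

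First, I would verify the formal consistency of the two sides. Shimura's $p_K$ satisfies properties (ii)--(iv) of Theorem \ref{sps}, and one checks directly that $g_K(\mathrm{id},\tau)$ satisfies the analogues: for example, the oddness of $\chi$ immediately yields $g_K(\mathrm{id},\tau)g_K(\mathrm{id},\tau\rho)=1$, matching the corresponding functional equation for $p_K$. Similar bookkeeping, together with the invariance properties in Lemma \ref{replace2}, should handle behaviour under inflation (property (iii)) and under isomorphism of CM-fields (property (iv)). These checks ensure that we are comparing objects with the same formal symmetries, and they pin down the shape of any candidate proof.

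Next, I would exploit the decomposition (\ref{mvsf}) of Theorem \ref{candec} to rewrite $\sum_{c} \chi(c)X(c,\mathrm{id})$. By adding and subtracting $\sum_{c}\chi(c)X(c,\iota)$ for every $\iota\neq\mathrm{id}$, one can express $g_K(\mathrm{id},\tau)$ as a monomial in $\exp(\zeta'(0,c))$ multiplied by a correction built entirely from $\exp(X(c,\iota))$ with $\iota\neq\mathrm{id}$. Theorem \ref{main} --- and in the CM situation, Corollary \ref{maincor} --- collapses that correction to an algebraic number modulo $\overline{\mathbb Q}^\times$. The result should be the reformulation (\ref{equiv}) announced in the introduction, in which $g_K(\mathrm{id},\tau)$ is identified, modulo $\overline{\mathbb Q}^\times$, with a simpler expression involving only $\exp(\zeta'(0,c))$ and powers of $\pi$.

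The truly hard step, which I expect will block any unconditional proof, is the final identification of this reformulated quantity with $p_K(\mathrm{id},\tau)$. For $F=\mathbb Q$ this is essentially the Chowla--Selberg formula, and that case is already subsumed in Corollary \ref{maincor} via the proved Stark conjecture for $(K\cap\mathbb R)/\mathbb Q$. For totally real $F$ of degree $n\geq 2$, however, no Chowla--Selberg-type identity is presently known that expresses periods of higher-dimensional CM abelian varieties in terms of Barnes' multiple gamma function in the generality required --- this is precisely the difficulty Yoshida's broader programme on CM-periods is designed to address. The realistic deliverable of this section is therefore the reformulation above, the natural generalization (Conjecture \ref{maincnj}) which itself implies a part of Stark's conjecture, and the partial evidence assembled in Propositions \ref{trans}, \ref{mainprp}, and \ref{mainprp2}.
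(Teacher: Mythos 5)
This statement is a \emph{conjecture}, not a theorem: the paper does not prove Conjecture \ref{YC} and, as far as is known, no unconditional proof exists. Your proposal correctly recognizes this, and the content you actually deliver --- rewriting $g_K(\mathrm{id},\tau)$ via the Shintani decomposition (\ref{mvsf}), using Theorem \ref{main} and Corollary \ref{maincor} to absorb the contributions from the embeddings $\iota\neq\mathrm{id}$ modulo $\overline{\mathbb Q}^\times$, and arriving at the reformulation (\ref{equiv}) --- is essentially a reconstruction of the proof of Proposition \ref{trans}, which is what the paper itself does with Conjecture \ref{YC} rather than proving it. Your identification of the final obstruction (no Chowla--Selberg-type formula is available for CM abelian varieties over higher-degree totally real fields) is exactly the issue Yoshida's program is meant to attack, and your remark that $F=\mathbb Q$ is subsumed in Corollary \ref{maincor} is correct.

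One caveat in the ``formal consistency'' paragraph: your claim that $g_K(\mathrm{id},\tau)g_K(\mathrm{id},\tau\rho)=1$ ``matches the corresponding functional equation for $p_K$'' needs more care. Theorem \ref{sps}-(ii) gives $p_K(\xi\rho,\eta)p_K(\xi,\eta\rho)=p_K(\xi,\eta)^{-1}$, which with $\xi=\mathrm{id}$, $\eta=\tau$ yields $p_K(\rho,\tau)p_K(\mathrm{id},\tau\rho)=p_K(\mathrm{id},\tau)^{-1}$; to turn this into $p_K(\mathrm{id},\tau)p_K(\mathrm{id},\tau\rho)=1$ one needs an additional relation expressing $p_K(\rho,\tau)$ in terms of $p_K(\mathrm{id},\,\cdot\,)$, which is not among the properties listed. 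This is a side remark rather than a load-bearing step, but as written it overstates the ease of the check. Otherwise your understanding of what can and cannot be established here aligns with the paper.
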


\begin{rmk}
Strictly speaking, Yoshida defined the symbol $g_K(\mathrm{id},\tau)$ by using 
the original $W$-invariant and formulated the above Conjecture, 
but this does not matter:
We see that $\frac{1}{|G|} \sum_{\chi \in (\hat G_-)_\mathfrak f} \frac{\chi(\tau)\chi(c)}{L(0,\chi)} \in \mathbb Q$ 
since $ (\hat G_-)_\mathfrak f$ is stable under the action of $\mathrm{Gal}( \overline{\mathbb Q}/\mathbb Q)$. 
Therefore $g_K(\mathrm{id},\tau) \bmod \overline{\mathbb Q}^\times$ does not change due to the modification of the $W$-invariant, 
since the $W$-invariants in \cite{Yo} and in this paper are of the form $a \log \alpha$ with $a \in \mathbb Q$, $\alpha \in \overline{\mathbb Q}$.
We also note that $g_K(\mathrm{id},\tau) \bmod \overline{\mathbb Q}^\times$
does not depend on the choices of a Shintani domain $D$ and integral ideals $\{\mathfrak a_\mu\}_\mu$ by \cite[Chapter III, \S 3.6, 3.7]{Yo}.
\end{rmk}

Under the above Conjecture, we can express any values of Shimura's period symbol, not only of the form $p_K(\mathrm{id},\tau)$, in terms of $\exp(X(c))$ 
(for details, see \cite[Chapter III, the paragraph following Conjecture 3.9]{Yo}).
Let us consider the opposite situation: a (conjectural) formula for $\exp(X(c))$ in terms of $p_K$.
For simplicity, we extend Shimura's period symbol $p_K$ to a bilinear map 
$\colon I_K\otimes_\mathbb Z \mathbb Q \times I_K\otimes_\mathbb Z \mathbb Q  \rightarrow \mathbb C^\times/\overline{\mathbb Q}^\times$,
which is also denoted by $p_K$. 

\begin{prp} \label{trans}
Conjecture \ref{YC} is equivalent to the following: Let $F,K,G,\mathfrak f_{K/F}$ be as above, 
$\mathfrak f_0$ an integral divisor of $F$ satisfying $\mathfrak f_{K/F} | \mathfrak f_0$. Then we have
\begin{equation} \label{equiv}
\prod_{c \in \mathrm{Art}_{\mathfrak f_0}^{-1}(\tau)}\exp(X(c)) \bmod \overline{\mathbb Q}^\times 
=\pi^{\zeta_{\mathfrak f_0}(0,\tau)}p_K(\tau,\sum_{\sigma \in G}\zeta_{\mathfrak f_0}(0,\sigma)\sigma).
\end{equation}
Here $\mathrm{Art}_{\mathfrak f_0}$ is the composite map $C_{\mathfrak f_0} \rightarrow C_{\mathfrak f_{K/F}} \stackrel{\mathrm{Art}}\rightarrow G$, and we put 
$\zeta_{\mathfrak f_0}(s,\sigma):=\zeta_S(s,\sigma)$ with $S:=\{$all places of $F$ dividing $\mathfrak f_0 \}$.
\end{prp}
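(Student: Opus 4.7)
The plan is to pass between Conjecture \ref{YC} and formula (\ref{equiv}) by discrete Fourier inversion on $G$, after reducing $p_K(\tau,\cdot)$ to $p_K(\mathrm{id},\cdot)$ via Theorem \ref{sps}-(iv). Writing $\Theta_{\mathfrak f_0}(\tau):=\sum_{c\in\mathrm{Art}_{\mathfrak f_0}^{-1}(\tau)}X(c)$ and taking logarithms modulo $\log\overline{\mathbb Q}^{\times}$, both assertions become additive identities for functions on $G$, and should be equivalent by standard character-sum manipulations.

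First, Theorem \ref{sps}-(iv) applied with $K'=K$ and $\gamma=\tau\in G$ yields $p_K(\tau,\sigma)\equiv p_K(\mathrm{id},\tau^{-1}\sigma)\pmod{\overline{\mathbb Q}^{\times}}$. Combined with the bilinearity of $p_K$, (\ref{equiv}) is equivalent to
\[
\Theta_{\mathfrak f_0}(\tau)\equiv\zeta_{\mathfrak f_0}(0,\tau)\log\pi+\sum_{\sigma'\in G}\zeta_{\mathfrak f_0}(0,\tau\sigma')\log p_K(\mathrm{id},\sigma')\pmod{\log\overline{\mathbb Q}^{\times}}.
\]
By Corollary \ref{maincor}, $\Theta_{\mathfrak f_0}(\tau)\equiv -\Theta_{\mathfrak f_0}(\tau\rho)\pmod{\log\overline{\mathbb Q}^{\times}}$, so $\Theta_{\mathfrak f_0}$ is odd; this restricts the forthcoming Fourier inversion to odd characters, which is exactly what is needed since $L(0,\chi)$ appears in the denominator of Yoshida's formula and vanishes for even $\chi$.

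Next, I would carry out the inversion. For each character $\chi$ of $G$ of conductor $\mathfrak f_\chi$, the standard computation
\[
\sum_{\sigma'\in G}\chi(\sigma')\zeta_{\mathfrak f_0}(0,\tau\sigma')=\bar\chi(\tau)L_{\mathfrak f_0}(0,\chi)=\bar\chi(\tau)\prod_{\lambda\mid\mathfrak f_0,\,\lambda\nmid\mathfrak f_\chi}(1-\chi(\lambda))\cdot L(0,\chi)
\]
combined with Yoshida's expression for $\log p_K(\mathrm{id},\sigma')$ transforms the right-hand side of the reformulated (\ref{equiv}) into
\[
\tfrac{1}{|G|}\sum_{\mathfrak f\mid\mathfrak f_{K/F}}\sum_{\chi\in(\hat G_-)_{\mathfrak f}}\frac{\bar\chi(\tau)L_{\mathfrak f_0}(0,\chi)}{L(0,\chi)}\sum_{c\in C_\mathfrak f}\chi(c)X(c);
\]
the contribution of the $-\mu(\sigma')/2\log\pi$ piece of Yoshida's formula telescopes to $\tfrac12[\zeta_{\mathfrak f_0}(0,\tau)+\zeta_{\mathfrak f_0}(0,\tau\rho)]\log\pi$, which vanishes because $L_{\mathfrak f_0}(0,\chi)=0$ for every even character $\chi$ of $G$ in the CM setting (no even $\chi$ is totally odd at the infinite places of $F$). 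On the other hand, Fourier inversion of $\Theta_{\mathfrak f_0}$ against odd characters gives
\[
\Theta_{\mathfrak f_0}(\tau)\equiv\tfrac{1}{|G|}\sum_{\chi\in\hat G_-}\bar\chi(\tau)\sum_{c\in C_{\mathfrak f_0}}\chi(\mathrm{Art}_{\mathfrak f_0}(c))X(c)\pmod{\log\overline{\mathbb Q}^{\times}},
\]
and matching the two expressions term-by-term reduces the equivalence to the distribution relation
\[
\sum_{c\in C_{\mathfrak f_0}}\chi(\mathrm{Art}_{\mathfrak f_0}(c))X(c)\equiv\prod_{\lambda\mid\mathfrak f_0,\,\lambda\nmid\mathfrak f_\chi}(1-\chi(\lambda))\sum_{c\in C_{\mathfrak f_\chi}}\chi(c)X(c)\pmod{\log\overline{\mathbb Q}^{\times}}
\]
for each odd character $\chi$. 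The converse implication ((\ref{equiv})$\Rightarrow$Conjecture \ref{YC}) follows by running the inversion in reverse.

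The main obstacle is verifying this distribution relation for $X$ under change of ray class modulus. I expect to carry it out by comparing Shintani data associated with $C_{\mathfrak f_\chi}$ and $C_{\mathfrak f_0}$ and tracking the $\iota(\mathcal O^\times_+)^{\mathbb Q}$-ambiguities using Lemmas \ref{replace} and \ref{replace2}; the modification of the $W$-invariant in this paper is designed precisely to make these comparisons clean. The decisive input from the main theorem is the oddness of $\Theta_{\mathfrak f_0}$ provided by Corollary \ref{maincor}: it restricts the Fourier inversion to $(\hat G_-)_{\mathfrak f}$ and ensures that both sides are sums over the same character range.
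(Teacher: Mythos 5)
Your overall route — rewrite $p_K(\tau,\cdot)$ in terms of $p_K(\mathrm{id},\cdot)$ via Theorem \ref{sps}-(iv), take logarithms, expand $g_K$ by characters, compute $\sum_\sigma\chi(\sigma)\zeta_{\mathfrak f_0}(0,\sigma\tau)=\chi(\tau)^{-1}L_{\mathfrak f_0}(0,\chi)$, note the $\pi$-contribution cancels by oddness, and finally invoke Corollary \ref{maincor} to convert the antisymmetrized sum $\tfrac12\Theta_{\mathfrak f_0}(\tau)-\tfrac12\Theta_{\mathfrak f_0}(\tau\rho)$ into $\Theta_{\mathfrak f_0}(\tau)$ — is exactly the paper's. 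But the step you call ``matching the two expressions term-by-term'' contains a genuine gap. You reduce the equivalence to the character-by-character distribution relation
\[
\sum_{c\in C_{\mathfrak f_0}}\chi_{\mathfrak f_0}(c)X(c)\equiv\prod_{\lambda\mid\mathfrak f_0,\ \lambda\nmid\mathfrak f_\chi}(1-\chi(\lambda))\sum_{c\in C_{\mathfrak f_\chi}}\chi(c)X(c)\pmod{\text{logs of algebraic numbers}}
\]
for each individual odd $\chi$. This is strictly stronger than what is needed, and it is almost certainly false as stated. When one changes the level or the choice of representatives $\mathfrak a_c$, Lemma \ref{replace2} produces error terms $\log\alpha_c$ with $\alpha_c\in(F_+)^{\mathbb Q}$, and the discrepancy between the two sides has the shape $\sum_c\chi(c)\log\alpha_c$ (plus a term $\chi(\mathfrak q)L(0,\chi)\log_{\mathrm{id}}\mathfrak q$ from the change of representatives, cf.\ \cite[Lemma 5.3]{KY1}). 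With $\chi(c)$ genuine roots of unity, $\sum_c\chi(c)\log\alpha_c$ is not a logarithm of an algebraic number in general, so fixing a single $\chi$ you cannot conclude mod $\overline{\mathbb Q}^\times$.

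The paper never asserts the character-by-character version. Instead, it states the needed fact only for the \emph{sum} $\sum_{\chi\in(\hat G_-)_{\mathfrak f}}\chi(\tau)^{-1}(\cdots)$, reduces it one prime $\mathfrak q$ at a time to (\ref{reduced}), inserts the explicit algebraic identity from \cite[Lemma 5.3]{KY1} (an exact, not mod-$\overline{\mathbb Q}^\times$, relation between $\sum_c\chi(c)X(c)$ at levels $\mathfrak f'$ and $\mathfrak f'\mathfrak q$), and then observes that the error terms aggregate to $\exp\bigl(\sum_i\bigl(\sum_{\chi\in(\hat G_-)_{\mathfrak f}}\chi(\mathfrak a_i)\bigr)\log\alpha_i\bigr)$ where $\alpha_i\in(F_+)^{\mathbb Q}$ and $\mathfrak a_i$ are \emph{independent of $\chi$}; because $(\hat G_-)_{\mathfrak f}$ is stable under $\mathrm{Gal}(\overline{\mathbb Q}/\mathbb Q)$, each coefficient $\sum_\chi\chi(\mathfrak a_i)$ is rational, and the whole error is algebraic. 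To repair your argument you should replace the per-$\chi$ distribution relation by the statement summed over the Galois orbit $(\hat G_-)_{\mathfrak f}$, supply the explicit level-change identity \cite[Lemma 5.3]{KY1} (Lemmas \ref{replace}, \ref{replace2} alone do not give you the $\chi(\mathfrak q)L(0,\chi)\log_{\mathrm{id}}\mathfrak q$ term), and finish via the Galois-stability-implies-rationality observation.
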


\begin{proof}
First we note that $\zeta_{\mathfrak f_0}(0,\sigma) \in \mathbb Q$ by (\ref{=zeta}).
We prove that Conjecture \ref{YC} implies (\ref{equiv});
The converse follows by a similar but simpler argument.
The right-hand side of (\ref{equiv}) is equal to 
$\pi^{\zeta_{\mathfrak f_0}(0,\tau)}p_K(\mathrm{id},\sum_{\sigma \in G}\zeta_{\mathfrak f_0}(0,\sigma\tau)\sigma)$ by Theorem \ref{sps}-(iv).
Conjecture \ref{YC} states that this is equal to 
\begin{equation} \label{rhs}
\exp(\frac{1}{|G|} \sum_{\mathfrak f \mid \mathfrak f_{K/F}}\sum_{\chi \in (\hat G_-)_\mathfrak f}
\chi(\tau)^{-1} \prod_{\mathfrak p \mid \mathfrak f_0}  (1-\chi(\mathfrak p)) \sum_{c \in C_\mathfrak f} \chi(c) X(c)) \bmod \overline{\mathbb Q}^\times.
\end{equation}
Here we used the following relations:
\begin{equation*}
\begin{split}
&\sum_{\sigma \in G}\frac{-\mu(\sigma)}{2}\zeta_{\mathfrak f_0}(0,\sigma\tau)=
\frac{-1}{2}\zeta_{\mathfrak f_0}(0,\tau)+\frac{1}{2}\zeta_{\mathfrak f_0}(0,\tau\rho)=-\zeta_{\mathfrak f_0}(0,\tau), \\
&\sum_{\sigma \in G} \chi(\sigma)\zeta_{\mathfrak f_0}(0,\sigma\tau)=\chi(\tau)^{-1}L(0,\chi_{\mathfrak f_0})
=\chi(\tau)^{-1} \prod_{\mathfrak p \mid \mathfrak f_0} (1-\chi(\mathfrak p))L(0,\chi),
\end{split}
\end{equation*}
where the last equality in the first line follows from the fact that $L(0,\chi)=0$ for all even characters $\chi$, 
and the symbol $\chi_{\mathfrak f_0}$ in the second line denotes the composite map 
$C_{\mathfrak f_0} \rightarrow C_{\mathfrak f} \stackrel{\chi}\rightarrow \mathbb C^\times$.
Therefore it suffices to show that 
\begin{equation} \label{claim}
\text{$\prod_{\mathfrak p \mid \mathfrak f_0}  (1-\chi(\mathfrak p)) \sum_{c \in C_\mathfrak f} \chi(c) X(c)$ in (\ref{rhs}) 
can be replaced by $\sum_{c \in C_{\mathfrak f_0}} \chi_{\mathfrak f_0}(c) X(c)$}.
\end{equation}
Indeed by using (\ref{claim}), we see that (\ref{rhs}) is equal to 
\begin{multline*}
\exp(\frac{1}{|G|} \sum_{\chi \in \hat G_-}
\chi(\tau)^{-1} \sum_{c \in C_{\mathfrak f_0}} \chi_{\mathfrak f_0}(c) X(c)) \bmod \overline{\mathbb Q}^\times \\
=\exp(\frac{1}{2}\sum_{c \in \mathrm{Art}_{\mathfrak f_0}^{-1}(\tau)} X(c)
-\frac{1}{2}\sum_{c \in \mathrm{Art}_{\mathfrak f_0}^{-1}(\tau \rho)} X(c)) \bmod \overline{\mathbb Q}^\times.
\end{multline*}
Here we denote by $\hat G_-$ the set of all add characters of $G$, 
and we used the relation: $\sum_{\chi \in \hat G_-}\chi(\tau)=\frac{|G|}{2},-\frac{|G|}{2},0$ if $\tau=\mathrm{id},\rho$, otherwise, respectively.
Then the assertion follows from Corollary \ref{maincor}.

In the following of the proof, we show (\ref{claim}), which can be reduced to the following:
Let $\mathfrak f,\mathfrak f'$ be integral divisors satisfying 
$\mathfrak f | \mathfrak f_{K/F}$, $\mathfrak f | \mathfrak f' | \mathfrak f_0$. 
Then for any prime ideal $\mathfrak q | \frac{\mathfrak f_0}{\mathfrak f'}$, we have 
\begin{multline} \label{reduced}
\exp(\sum_{\chi \in (\hat G_-)_\mathfrak f}\chi(\tau)^{-1} \prod_{\mathfrak p \mid \mathfrak f_0} (1-\chi_{\mathfrak f'}(\mathfrak p)) 
\sum_{c \in C_{\mathfrak f'}} \chi_{\mathfrak f'}(c) X(c)) \\
\equiv 
\exp(\sum_{\chi \in (\hat G_-)_\mathfrak f}\chi(\tau)^{-1} \prod_{\mathfrak p \mid \mathfrak f_0} (1-\chi_{\mathfrak f'\mathfrak q}(\mathfrak p)) 
\sum_{c \in C_{\mathfrak f'\mathfrak q}} \chi_{\mathfrak f'\mathfrak q}(c) X(c))
\bmod \overline{\mathbb Q}^\times.
\end{multline}
In order to prove (\ref{reduced}), we recall a result from \cite{KY1}.
We note that $\{\mathfrak a_\mu\}_\mu$ is a complete system of representatives of the narrow ideal class group,
so is $\{\mathfrak a_\mu \mathfrak q\}_\mu$.
We write $X(c)$ as $X(c,\{\mathfrak a_\mu\}_\mu), X(c,\{\mathfrak a_\mu \mathfrak q\}_\mu)$ 
when we choose $\mathfrak a_c$ from among $\{\mathfrak a_\mu\}_{\mu}, \{\mathfrak a_c \mathfrak q\}_{\mu}$ respectively.
Then \cite[Lemma 5.3]{KY1} states that 
\begin{multline*}
\sum_{c \in C_{\mathfrak f'\mathfrak q}} \chi_{\mathfrak f'\mathfrak q}(c)X(c,\{\mathfrak a_\mu\}_\mu) \\
=\sum_{c \in C_{\mathfrak f'}} \chi_{\mathfrak f'}(c)X(c,\{\mathfrak a_\mu \mathfrak q\}_\mu)
-\chi_{\mathfrak f'}(\mathfrak q)\sum_{c \in C_{\mathfrak f'}} \chi_{\mathfrak f'}(c)X(c,\{\mathfrak a_\mu\}_\mu)
+\chi_{\mathfrak f'}(\mathfrak q)L(0,\chi_{\mathfrak f'})\log_{\mathrm{id}} \mathfrak q.
\end{multline*}
On the other hand, by Lemma \ref{replace2}, there exists an element $\alpha_{c,\{\mathfrak a_\mu\},\mathfrak q} \in (F_+)^\mathbb Q$
satisfying 
\begin{equation*}
X(c,\{\mathfrak a_\mu \mathfrak q\}_\mu)=X(c,\{\mathfrak a_\mu\}_\mu) + \log \alpha_{c,\{\mathfrak a_\mu\},\mathfrak q}.
\end{equation*}
Hence the ratio of both sides of (\ref{reduced}) can be written as the form
\begin{equation} \label{reduced2}
\exp(\sum_{i=1}^k \sum_{\chi \in (\hat G_-)_\mathfrak f} \chi(\mathfrak a_i) \log(\alpha_i)).
\end{equation}
Here $k$ is a natural number, $\mathfrak a_i$ are integral ideals, and $\alpha_i \in (F_+)^\mathbb Q$.
More precisely, each $\log \alpha_i$ is equal to one of $\pm \log \alpha_{c,\{\mathfrak a_\mu\},\mathfrak q},\pm \zeta_{\mathfrak f'}(0,c)\log \mathfrak q$, 
and each $\mathfrak a_i$ is a certain product of prime ideals dividing $\mathfrak f_0$, 
an integral ideal whose image under the Artin map is $\tau$, and representatives of classes in $C_{\mathfrak f'}$.
In particular, $\mathfrak a_i$'s and $\alpha_i$'s do not depend on $\chi \in (\hat G_-)_\mathfrak f$.
In addition, we have $\sum_{\chi \in (\hat G_-)_\mathfrak f} \chi(\mathfrak a_i) \in \mathbb Q$ since 
$(\hat G_-)_\mathfrak f$ is stable under the action of $\mathrm{Gal}(\overline{\mathbb Q}/\mathbb Q)$.
Hence we see that (\ref{reduced2}) is an algebraic number, so (\ref{reduced}) follows.
This completes the proof.
\end{proof}

The expression (\ref{equiv}) and Theorem \ref{sps}-(iii) suggest that it is natural to generalize Conjecture \ref{YC} to the following form.

\begin{cnj} \label{maincnj}
Let $K$ be a finite abelian extension of a totally real field $F$.
We assume that there exists a CM-subfield of $K$ and put $K_{\mathrm{CM}}$ to be the maximal CM-subfield of $K$.
Let $G:=\mathrm{Gal}(K/F), \mathfrak f_{K/F}| \mathfrak f_0, \mathrm{Art}_{\mathfrak f_0}$ be as in Proposition \ref{trans}. Then we have
\begin{equation*}
\prod_{c \in \mathrm{Art}_{\mathfrak f_0}^{-1}(\tau)}\exp(X(c)) \bmod \overline{\mathbb Q}^\times 
=\pi^{\zeta_{\mathfrak f_0}(0,\tau)}
p_{K_{\mathrm{CM}}}(\tau|_{{K_\mathrm{CM}}},\mathrm{Inf}^{-1}(\sum_{\sigma \in G}\zeta_{\mathfrak f_0}(0,\sigma)\sigma))
\qquad (\tau \in G).
\end{equation*}
\end{cnj}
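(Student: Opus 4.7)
The plan is to reduce Conjecture \ref{maincnj} to the equivalence already established in Proposition \ref{trans}, and thereby to Yoshida's Conjecture \ref{YC}. The first observation I would make is that the hypotheses force $K=K_{\mathrm{CM}}$: any embedding $K\hookrightarrow\mathbb{C}$ restricts to an embedding of the assumed CM-subfield $M\subseteq K$, none of which is real, so $K$ has no real embedding and is totally complex. Since $K/F$ is abelian, complex conjugation defines a well-defined element $\rho\in G$ with $\rho^2=1$; using abelianity, $\rho$ commutes with every $\sigma\in G$, so the fixed field $K^\rho$ is stable under $G$, totally real, and of index $2$ in $K$. Hence $K$ itself is CM and $K_{\mathrm{CM}}=K$.

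With $K_{\mathrm{CM}}=K$, the inflation $\mathrm{Inf}\colon I_{K_{\mathrm{CM}}}\to I_K$ is the identity and $\tau|_{K_{\mathrm{CM}}}=\tau$ for every $\tau\in G$, so the right-hand side of Conjecture \ref{maincnj} collapses to $\pi^{\zeta_{\mathfrak f_0}(0,\tau)}\,p_K(\tau,\sum_\sigma\zeta_{\mathfrak f_0}(0,\sigma)\sigma)$, which is precisely the right-hand side of (\ref{equiv}). Proposition \ref{trans} then identifies Conjecture \ref{maincnj} with Yoshida's Conjecture \ref{YC}; the formulation via $p_{K_{\mathrm{CM}}}$ and $\mathrm{Inf}^{-1}$ is the structurally natural one, compatible with Theorem \ref{sps}-(iii), but carries no extra content in the abelian setting of the statement.

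The main obstacle is therefore Yoshida's conjecture itself, a deep open problem equating (up to $\overline{\mathbb{Q}}^\times$) products of Barnes multiple gamma values with CM-periods, for which no general method is available. To collect nontrivial evidence, I would first treat $F=\mathbb{Q}$: here the Chowla--Selberg formula expresses $p_K$ in $\Gamma$-values at rational arguments that agree with those appearing in $X(c)$, while $X(c,\mathrm{id})=\zeta'(0,c)$ and Stark's conjecture over $\mathbb{Q}$ is known, so the case $F=\mathbb{Q}$ goes through unconditionally. For higher-degree totally real $F$, the strategy is to apply Corollary \ref{maincor} to pair the ray classes over $\tau$ and $\tau\rho$ and reduce the statement modulo units to ``half'' of the ray class group; then Theorem \ref{main}, combined with known instances of Stark's conjecture and direct comparison of CM-periods, should yield evidence in specific subcases. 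The irreducibly hard step—producing the CM-periods on the right from the Barnes gamma values on the left in full generality—is exactly the content of Yoshida's conjecture, and no technique is currently known that would settle it.
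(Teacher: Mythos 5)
The central step of your reduction — the claim that the hypotheses force $K=K_{\mathrm{CM}}$, i.e.\ that an abelian extension of a totally real field which contains a CM subfield must itself be CM — is false, and this breaks the entire argument. The error is in your treatment of complex conjugation. Since $F$ is totally real of degree $n$, there are $n$ complex conjugations $\rho_{\iota_1},\dots,\rho_{\iota_n}\in G$, one for each real embedding $\iota_i$ of $F$ (defined by $\rho_{\iota_i}=\tilde\iota_i^{-1}\circ\rho\circ\tilde\iota_i$ for a lift $\tilde\iota_i$), and these need not coincide. You implicitly single out one of them, say $\rho_{\mathrm{id}}$, and assert that its fixed field $K^{\rho_{\mathrm{id}}}$ is totally real; but what is actually true is only that the infinite place $\mathrm{id}$ of $F$ splits completely in $K^{\rho_{\mathrm{id}}}$. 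The field $K$ is CM if and only if all the $\rho_{\iota_i}$ agree — that is precisely the content of the paper's description (\ref{kcm}), which identifies $K_{\mathrm{CM}}$ as the fixed field of $\langle\rho_\iota\rho_{\iota'}\mid\iota,\iota'\in J_F\rangle$. The paper's own Example \ref{exm} is a counterexample to your claim: with $F=\mathbb Q(\sqrt5)$, $H_{\mathfrak f_0}=F(\sqrt\epsilon,\sqrt{-1})$ is totally complex and contains the CM field $F(\sqrt{-1})$, but $H_{\mathfrak f_0}$ itself is not CM and $K_{\mathrm{CM}}=F(\sqrt{-1})\subsetneq H_{\mathfrak f_0}$ is a proper subfield, so $\mathrm{Inf}^{-1}$ is not the identity.

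As a consequence, Conjecture \ref{maincnj} genuinely generalizes Conjecture \ref{YC} and is not a trivial restatement of it. The paper's actual relation between the two is Proposition \ref{mainprp2}: Conjecture \ref{YC} for $K_{\mathrm{CM}}/F$ \emph{together with} the algebraicity statement (\ref{wwsc}) for $K_{\mathrm{St}}/F$ (a piece of Stark's conjecture) imply Conjecture \ref{maincnj}, and the proof there descends from $K$ to $K_{\mathrm{CM}}$ via (\ref{inf}) and then compares classes over the fibers of $\tau\mapsto\tau|_{K_{\mathrm{CM}}}$ using Theorem \ref{main} and (\ref{thmsc}). Your approach skips exactly the nontrivial content: controlling the product over $c\in\mathrm{Art}_{\mathfrak f_0}^{-1}(\tau)$ as $\tau$ varies over a coset of $\mathrm{Gal}(K/K_{\mathrm{CM}})$, which is where Theorem \ref{main} and (\ref{wwsc2}) are needed. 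Of course, since the statement is a conjecture, one should not expect a proof; but even as an analysis of how it relates to \ref{YC}, the proposal is incorrect in a way that an appeal to the paper's own example would have revealed.
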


For the well-definedness of the right-hand side, we need to show that $\sum_{\sigma \in G}\zeta_{\mathfrak f_0}(0,\sigma)\sigma$ is 
in the image of $\mathrm{Inf} \colon I_{K_{\mathrm{CM}}}\otimes_\mathbb Z \mathbb Q \rightarrow I_K\otimes_\mathbb Z \mathbb Q$.
Although this fact may be well-known to experts, we give a brief proof here:
An algebraic number field $L \subset \mathbb C$ is a CM-field if and only if 
the complex conjugation $\rho$ on $\mathbb C$ satisfies 
that $\rho$ induces a non-trivial automorphism of $L$, and that $\rho \tau=\tau \rho$ for every $\tau \in J_L$ (\cite[Proposition 5.11]{Shim}). 
Therefore 
\begin{equation} \label{kcm}
\text{$K_{\mathrm{CM}}$ is the fixed subfield of $K$ under $\langle \rho_\iota \rho_{\iota'} \mid \iota,\iota' \in J_F \rangle $}, 
\end{equation}
where $\rho_\iota \in G$ is the complex conjugation at $\iota$.
Hence it suffices to show that $\zeta_{\mathfrak f_0}(0,\sigma )=\zeta_{\mathfrak f_0}(0,\sigma \rho_\iota \rho_{\iota'})$ for all $\sigma \in G$, 
$\iota,\iota' \in J_F $, which follows from the fact that $L(0,\chi)=0$ if $\chi$ is not totally odd.
More precisely, we can write  
\begin{equation} \label{inf}
\begin{split}
\zeta_{\mathfrak f_0}(0,\sigma|_{K_{\mathrm{CM}}})&=[K:K_{\mathrm{CM}}]\zeta_{\mathfrak f_0}(0,\sigma) \qquad (\sigma \in G),\\
\sum_{\sigma \in \mathrm{Gal}(K_{\mathrm{CM}}/F)}\zeta_{\mathfrak f_0}(0,\sigma)\sigma
&=[K:K_{\mathrm{CM}}]\,\mathrm{Inf}^{-1}(\sum_{\sigma \in G}\zeta_{\mathfrak f_0}(0,\sigma)\sigma).
\end{split}
\end{equation}

Conjecture \ref{maincnj} also implies a part of Stark's conjecture in the following sense.

\begin{prp} \label{mainprp}
Let $F,K,S$ be as in Conjecture \ref{wsc}.
Assume that there exists an integral divisor $\mathfrak f_0$ satisfying that 
\begin{itemize}
\item The conductor $\mathfrak f_{K/F}$ of $K/F$ divides $\mathfrak f_0$.
\item Any place dividing $\mathfrak f_0$ is in $S$.
\item The maximal ray class field $H_{\mathfrak f_0}$ modulo $\mathfrak f_0$ contains a CM-subfield.
\end{itemize}
Then Conjecture \ref{maincnj} implies that we have 
\begin{equation} \label{wwsc}
\exp (\zeta_S'(0,\tau)) \in \overline{\mathbb Q}^\times \qquad(\tau \in \mathrm{Gal}(K/F)).
\end{equation}
\end{prp}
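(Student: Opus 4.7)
The plan is to exploit the hypothesis that $K \subset \mathbb{R}$ (inherited from Conjecture \ref{wsc}), so that the complex conjugation $\rho_1 \in \mathrm{Gal}(H_{\mathfrak{f}_0}/F)$ attached to the fixed real embedding $\iota_1 = \mathrm{id}$ restricts trivially to $K$. I will pair each ray class $c$ with $c_1 c \in C_{\mathfrak{f}_0}$, using Theorem \ref{main} to kill the $i \neq 1$ contributions and Conjecture \ref{maincnj} applied to $L := H_{\mathfrak{f}_0}$ (which has a CM-subfield by hypothesis and for which the Artin map is a bijection, so that the right-hand side of the conjectural formula collapses to a single factor) to handle the $i = 1$ contribution.

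First I would reduce to the case $S = \{\text{places dividing }\mathfrak{f}_0\}$. Writing the Euler product for primes $\mathfrak{q} \in S \setminus \mathrm{supp}(\mathfrak{f}_0)$ as $L_S(s,\chi) = \prod_\mathfrak{q}(1-\chi(\mathfrak{q})N\mathfrak{q}^{-s})\cdot L_{\mathfrak{f}_0}(s,\chi)$, differentiating at $s=0$ and applying character orthogonality expresses $\zeta_S'(0,\tau)$ as a $\mathbb{Q}$-linear combination of $\zeta_{\mathfrak{f}_0}'(0,\sigma)$'s plus $\log N\mathfrak{q}$-terms weighted by the rational values $\zeta_{\mathfrak{f}_0}(0,\cdot)$; so it suffices to show $\exp(\zeta_{\mathfrak{f}_0}'(0,\sigma)) \in \overline{\mathbb{Q}}^\times$ for all $\sigma \in G$.

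With $\tau \in G$ fixed, Theorem \ref{candec} yields
\begin{equation*}
\exp(\zeta_{\mathfrak{f}_0}'(0,\tau)) = \prod_{c \in \mathrm{Art}_{\mathfrak{f}_0}^{-1}(\tau)} \prod_{i=1}^{n} \exp(X(c,\iota_i)).
\end{equation*}
For each class $c$, I form the product $\exp(\zeta'(0,c))\exp(\zeta'(0,c_1 c)) = \prod_i \exp(X(c,\iota_i))\exp(X(c_1 c,\iota_i))$. Theorem \ref{main} shows every factor with $i \neq 1$ lies in $\overline{\mathbb{Q}}^\times$. For $i=1$, Conjecture \ref{maincnj} applied to $L/F$ gives
\begin{equation*}
\exp(X(c))\exp(X(c_1 c)) \equiv \pi^{\zeta(0,c)+\zeta(0,c_1 c)}\, p_{L_{\mathrm{CM}}}\bigl((1+\rho_{\mathrm{CM}})\tilde\tau_c|_{L_{\mathrm{CM}}},\, \eta_L\bigr) \pmod{\overline{\mathbb{Q}}^\times},
\end{equation*}
where $\tilde\tau_c \in \mathrm{Gal}(L/F)$ is the Artin image of $c$ and $\rho_{\mathrm{CM}} = \rho_1|_{L_{\mathrm{CM}}}$. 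The identity $\chi(c_1) = -1$ for every totally odd character of $C_{\mathfrak{f}_0}$ (the only ones contributing to $L_{\mathfrak{f}_0}(0,\chi)$) forces $\zeta(0,c)+\zeta(0,c_1 c) = 0$; and $(1+\rho_{\mathrm{CM}})\tilde\tau_c|_{L_{\mathrm{CM}}} = \mathrm{Inf}_{L_{\mathrm{CM}}/L_{\mathrm{CM}}^+}(\tilde\tau_c|_{L_{\mathrm{CM}}^+})$, so Theorem \ref{sps}(iii) rewrites the period as $p_{L_{\mathrm{CM}}^+}(\tilde\tau_c|_{L_{\mathrm{CM}}^+},\mathrm{Res}(\eta_L))$, which is algebraic because $L_{\mathrm{CM}}^+$ is totally real.

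Combining yields $\exp(\zeta'(0,c)+\zeta'(0,c_1 c)) \in \overline{\mathbb{Q}}^\times$ for every $c$. Since $K \subset \mathbb{R}$ via the identity embedding, $\rho_1|_K = \mathrm{id}$, so the involution $c \mapsto c_1 c$ preserves the fiber $\mathrm{Art}_{\mathfrak{f}_0}^{-1}(\tau)$; multiplying the pair relations over this fiber produces $\exp(2\zeta_{\mathfrak{f}_0}'(0,\tau)) \in \overline{\mathbb{Q}}^\times$, whence $\exp(\zeta_{\mathfrak{f}_0}'(0,\tau)) \in \overline{\mathbb{Q}}^\times$ and the reduction finishes the proof. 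The main obstacle I expect is justifying the algebraicity of $p_{L_{\mathrm{CM}}^+}$: since Theorem \ref{sps} only defines $p_K$ for CM $K$, this factor must be interpreted through the identity in Theorem \ref{sps}(iii) and then shown to lie in $\overline{\mathbb{Q}}^\times$, for instance by noting that when complex conjugation acts trivially (as it does on totally real $L_{\mathrm{CM}}^+$), Theorem \ref{sps}(ii) forces $p(\xi,\eta)^3 \equiv 1 \pmod{\overline{\mathbb{Q}}^\times}$.
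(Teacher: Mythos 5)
Your overall strategy is the same as the paper's: reduce to $S' = \{$places dividing $\mathfrak f_0\}$, pass to $H_{\mathfrak f_0}$ where the fibers of the Artin map are singletons, handle $\iota\neq\mathrm{id}$ via Theorem~\ref{main}, and handle $\iota=\mathrm{id}$ via Conjecture~\ref{maincnj} by pairing $\tau$ with $\tau\rho_{\mathrm{id}}$. However, your final step contains a genuine gap, which you yourself flag. You invoke Theorem~\ref{sps}(iii) to rewrite $p_{L_{\mathrm{CM}}}\bigl((1+\rho)\tilde\tau_c|_{L_{\mathrm{CM}}},\eta_L\bigr)$ as a symbol $p_{L_{\mathrm{CM}}^+}(\cdots)$ over the maximal totally real subfield, but Theorem~\ref{sps}(iii) is stated only for a pair of CM-fields $K\subset L$, and $L_{\mathrm{CM}}^+$ is not CM, so the rewriting is not justified. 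Your proposed repair, that $\rho$-invariance forces $p(\xi,\eta)^3\equiv 1$, is also not the relevant relation here: that identity requires \emph{both} $\xi\rho=\xi$ and $\eta\rho=\eta$, whereas $\eta_L$ is $\rho$-anti-invariant (because $\zeta_{\mathfrak f_0}(0,\sigma\rho)=-\zeta_{\mathfrak f_0}(0,\sigma)$, as only totally odd characters contribute).

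The correct and much shorter argument, which is what the paper uses, is to apply Theorem~\ref{sps}(ii) directly. Set $\xi:=(1+\rho)\tilde\tau_c|_{L_{\mathrm{CM}}}$, so $\xi\rho=\xi$, and note $\eta_L\rho=-\eta_L$. Then
\begin{equation*}
p_{L_{\mathrm{CM}}}(\xi\rho,\eta_L)\,p_{L_{\mathrm{CM}}}(\xi,\eta_L\rho)
= p_{L_{\mathrm{CM}}}(\xi,\eta_L)\,p_{L_{\mathrm{CM}}}(\xi,\eta_L)^{-1}=1,
\end{equation*}
while the right-hand side of Theorem~\ref{sps}(ii) is $p_{L_{\mathrm{CM}}}(\xi,\eta_L)^{-1}$, so $p_{L_{\mathrm{CM}}}(\xi,\eta_L)\equiv 1\bmod\overline{\mathbb Q}^\times$. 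Combined with $\zeta(0,c)+\zeta(0,c_1c)=0$, this gives the algebraicity of $\exp(X(c))\exp(X(c_1c))$, and the rest of your argument (multiplying over the fiber, then reducing from $S'$ to $S$ by Euler-factor manipulations) is sound. So the gap is localized but real; once you replace the $L_{\mathrm{CM}}^+$ detour with the direct use of Theorem~\ref{sps}(ii) for an even $\xi$ against an odd $\eta_L$, the proof matches the paper's.
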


\begin{proof}
Let $\mathfrak f_0$ satisfy the above assumptions.
By Theorem \ref{main}, for $\iota \neq \mathrm{id}$ we have
\begin{equation*}
\prod_{c \in \mathrm{Art}_{\mathfrak f_0}^{-1}(\tau)}\exp(X(c,\iota)) 
\cdot \prod_{c \in \mathrm{Art}_{\mathfrak f_0}^{-1}(\tau \rho_{\mathrm{id}})}\exp(X(c,\iota))
\in \overline{\mathbb Q}^\times \qquad (\tau \in \mathrm{Gal}(H_{\mathfrak f_0}/F)), 
\end{equation*}
where $\rho_{\mathrm{id}}$ is the complex conjugation on $H_{\mathfrak f_0}$ at $\mathrm{id} \in J_F$.
Conjecture \ref{maincnj} implies that the same holds for $\iota=\mathrm{id}$, by Theorem \ref{sps}-(ii).
Hence we obtain $\exp (\zeta_{S'}'(0,\tau))\exp (\zeta_{S'}'(0,\tau \rho_{\mathrm{id}})) \in \overline{\mathbb Q}^\times$ 
for $S':=\{$all places of $F$ dividing $\mathfrak f_0 \}$, $\tau \in \mathrm{Gal}(H_{\mathfrak f_0}/F)$.
Then the assertion follows, since $S'\subset S$ and $K$ is fixed under $\rho_{\mathrm{id}}$.
\end{proof}

\begin{exm} \label{exm}
We introduce an example which is discussed in \cite[Chapter III, Example 6.3]{Yo}.
Let $F=\mathbb Q(\sqrt 5)$, $K=F(\sqrt{\epsilon})$ with $\epsilon=\frac{1+\sqrt 5}{2}$.
We denote by $\infty_1,\infty_2$ the infinite places of $F$ corresponding to the real embeddings 
$\iota_1=\mathrm{id},\iota_2\colon \sqrt{5} \mapsto -\sqrt{5}$, respectively.
Then the conductor of $K/F$ is $(4) \infty_2$.
We put $S:=\{(2),\infty_1,\infty_2\}$, $\mathfrak f_0:=(4)\infty_1\infty_2$. 
Then the maximal ray class field $H_{\mathfrak f_0}$ is $F(\sqrt{\epsilon},\sqrt{-1})$, 
and its maximal CM-subfield is $F(\sqrt{-1})$.
We see that $C_{\mathfrak f_0}=\{c_1,c_2,c_3,c_4\}$ where 
$c_1,c_2,c_3,c_4$ are the classes of $(1),(3),(4+\sqrt 5),(6+\sqrt 5)$ respectively.
The element in $\mathrm{Gal}(H_{\mathfrak f_0}/F)$ corresponding to $c_i$ under the Artin map $C_{\mathfrak f_0} \cong \mathrm{Gal}(H_{\mathfrak f_0}/F)$
is denoted by $\tau_i$.
We put $G:=\frac{\Gamma(\frac{1}{4})}{\Gamma(\frac{3}{4})}\prod_{a=1}^{19} \Gamma(\frac{a}{20})^{\psi(a)/4}$ with $\psi$ the Dirichlet character 
corresponding to the quadratic extension $\mathbb Q(\sqrt{-5})/\mathbb Q$.
The following was shown in \cite[Chapter III, Example 6.3]{Yo}:
\begin{equation} \label{explicit}
\exp(X(c_1)) \equiv  \exp(X(c_2)) \equiv  G^{\frac{1}{4}}, \quad 
\exp(X(c_3)) \equiv \exp(X(c_4)) \equiv G^{-\frac{1}{4}} \bmod \overline{\mathbb Q}^\times, 
\end{equation}
\begin{equation*}
\begin{split}
&\zeta(0,c_1)=\zeta(0,c_2)=\frac{1}{4}, \quad \zeta(0,c_3)=\zeta(0,c_4)=-\frac{1}{4}, \\
&G \bmod \overline{\mathbb Q}^\times =\pi p_{F(\sqrt{-1})}(\mathrm{id},\mathrm{id})^2.
\end{split}
\end{equation*}
Therefore we see that Conjecture \ref{maincnj} holds true for $H_{\mathfrak f_0}/F$:
For example, let $\tau:=\tau_1=\mathrm{id} \in \mathrm{Gal}(H_{\mathfrak f_0}/F)$.
Then Conjecture \ref{maincnj} states that 
\begin{equation} \label{conj4}
\exp(X(c_1)) \bmod \overline{\mathbb Q}^\times 
=\pi^{\frac{1}{4}}
p_{F(\sqrt{-1})}(\mathrm{id},\mathrm{Inf}^{-1}(\tfrac{1}{4}\tau_1+\tfrac{1}{4}\tau_2-\tfrac{1}{4}\tau_3-\tfrac{1}{4}\tau_4)).
\end{equation}
Since $F(\sqrt{-1})$ is the fixed subfield of $H_{\mathfrak f_0}$ under $\{\tau_1,\tau_2\}$, 
we see that the right-hand side of (\ref{conj4}) is equal to $\pi^{\frac{1}{4}} p_{F(\sqrt{-1})}(\mathrm{id},\tfrac{1}{4} \mathrm{id}-\tfrac{1}{4}\rho)$,
which is again equal to $G^{\frac{1}{4}} \bmod \overline{\mathbb Q}^\times$ by Theorem \ref{sps}-(ii).
Therefore (\ref{conj4}) holds by (\ref{explicit}).
The cases $\tau=\tau_2,\tau_3,\tau_4$ can be proved similarly.
Moreover (\ref{explicit}), which is a special case of Conjecture \ref{maincnj}, 
implies the algebraicity of Stark units $\exp (-2\zeta_S'(0,\tau))$ with $\tau \in \mathrm{Gal}(K/F)$, 
as we show in Proposition \ref{mainprp}.
For example, let $\tau:=\mathrm{id} \in \mathrm{Gal}(K/F)$.
Since $K$ is the fixed subfield of $H_{\mathfrak f_0}$ under $\{\tau_1,\tau_3\}$,
we have $\exp (\zeta_S'(0,\mathrm{id}))=\exp (\zeta'(0,c_1))\exp (\zeta'(0,c_3)) \equiv \exp(X(c_1))\exp(X(c_3)) 
\equiv 1 \bmod \overline{\mathbb Q}^\times$
by Theorem \ref{main} and (\ref{explicit}).
\end{exm}

\begin{rmk}
Proposition \ref{mainprp} and Example \ref{exm} suggest that by studying the relation between CM-periods and the multiple gamma function, 
we will obtain partial results for Stark's conjecture. 
When $F=\mathbb Q$, we obtained a more precise result in \cite{Ka}.
We showed that Rohrlich's formula in \cite{Gr}, which is a special case of Conjecture \ref{YC}, 
and Coleman's formula in \cite{Co}, which is a $p$-adic analogue of Rohrlich's formula, 
imply not only the algebraicity but also a reciprocity law (\ref{rinka}) in the following sense: 
Let $K/F$ be $\mathbb Q(\zeta_m+\zeta_m^{-1})/\mathbb Q$ with $\zeta_m:=e^{\frac{2\pi i}{m}}$, $m\geq 3$.
We put $S:=\{$primes dividing $m\} \cup \{$the unique infinite place of $\mathbb Q \}$.
Then for $\tau \in \mathrm{Gal}(\mathbb Q(\zeta_m+\zeta_m^{-1})/\mathbb Q)$,
$\sigma \in \mathrm{Gal}(\overline{\mathbb Q}/\mathbb Q)$ we have 
\begin{equation} \label{rinka}
\begin{split}
&\exp(\zeta_S'(0,\tau))\in \overline{\mathbb Q}, \\
&\sigma(\exp(\zeta_S'(0,\tau)))\equiv \exp(\zeta_S'(0,\tau\sigma)) \bmod \mu_\infty,
\end{split}
\end{equation}
where we denote by $\mu_\infty$ the group of all roots of unity.
We note that (\ref{rinka}) is a part of Stark's conjecture in the case $F=\mathbb Q$. 
\end{rmk}

The opposite direction of Proposition \ref{mainprp} also holds, in the following sense.

\begin{prp} \label{mainprp2}
Let $F,K,K_{\mathrm{CM}},\mathfrak f_{K/F}| \mathfrak f_0$ be as in Conjecture \ref{maincnj}.
We put $K_{\mathrm{St}}$ to be the fixed subfield of $K$ by the complex conjugation $\rho_\mathrm{id}$ at $\mathrm{id} \in J_F$,
and $S:=\{$all places of $F$ dividing $\mathfrak f_0 \}$.
Then Conjecture \ref{YC} for $K_{\mathrm{CM}}/F$ 
and (\ref{wwsc}) for $K_{\mathrm{St}}/F$ imply Conjecture \ref{maincnj}.
\end{prp}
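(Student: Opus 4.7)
The plan is to combine Conjecture \ref{YC} for $K_{\mathrm{CM}}/F$, which via Proposition \ref{trans} controls the product of $\exp(X(c))$ over a fiber of the Artin map to $K_{\mathrm{CM}}$, with Stark's conjecture (\ref{wwsc}) for $K_{\mathrm{St}}/F$, which forces the remaining factor at the embedding $\mathrm{id}\in J_F$ to be algebraic. Set $L(\sigma):=\prod_{c\in(\mathrm{Art}^K_{\mathfrak f_0})^{-1}(\sigma)}\exp(X(c))$ for $\sigma\in G$ and let $R(\tau)$ denote the right-hand side of Conjecture \ref{maincnj}. I would first apply Proposition \ref{trans} to $K_{\mathrm{CM}}/F$ and choose $\tau':=\tau|_{K_{\mathrm{CM}}}\in G_{\mathrm{CM}}:=\mathrm{Gal}(K_{\mathrm{CM}}/F)$. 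Grouping $(\mathrm{Art}^{K_{\mathrm{CM}}}_{\mathfrak f_0})^{-1}(\tau')$ by the fibers of $G\twoheadrightarrow G_{\mathrm{CM}}$ rewrites the left-hand side as $\prod_{\tau''\in G,\ \tau''|_{K_{\mathrm{CM}}}=\tau'}L(\tau'')$, while the trace relations (\ref{inf}) together with bilinearity of $p_{K_{\mathrm{CM}}}$ rewrite the right-hand side as $R(\tau)^{[K:K_{\mathrm{CM}}]}$ modulo $\overline{\mathbb Q}^\times$. Hence it suffices to prove that $L(\tau'')\equiv L(\tau)\bmod\overline{\mathbb Q}^\times$ for every $\tau''\in\tau\cdot\mathrm{Gal}(K/K_{\mathrm{CM}})$: then $(L(\tau)/R(\tau))^{[K:K_{\mathrm{CM}}]}\in\overline{\mathbb Q}^\times$, and since $\overline{\mathbb Q}$ is algebraically closed in $\mathbb C$ this forces $L(\tau)\equiv R(\tau)$.

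By (\ref{kcm}) the subgroup $\mathrm{Gal}(K/K_{\mathrm{CM}})\subset G$ is generated by the elements $\rho_{\iota_j}\rho_{\iota_1}$ with $\iota_1:=\mathrm{id}$, so it is enough to check $L(\tau\rho_{\iota_j}\rho_{\iota_1})\equiv L(\tau)$ for every $j\neq 1$. Letting $c_j\in C_{\mathfrak f_0}$ denote the class whose Artin image in $G$ is $\rho_{\iota_j}$, the map $d\mapsto c_jd$ bijects $(\mathrm{Art}^K_{\mathfrak f_0})^{-1}(\tau\rho_{\iota_1})$ with $(\mathrm{Art}^K_{\mathfrak f_0})^{-1}(\tau\rho_{\iota_j}\rho_{\iota_1})$, and Theorem \ref{main} applied with $i=1$ and this $j$ gives $\exp(X(d))\exp(X(c_jd))\in\overline{\mathbb Q}^\times$ for every such $d$. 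Multiplying these relations over $d$ then yields $L(\tau\rho_{\iota_j}\rho_{\iota_1})\cdot L(\tau\rho_{\iota_1})\in\overline{\mathbb Q}^\times$, so the reduction comes down to proving $L(\tau)\cdot L(\tau\rho_{\iota_1})\in\overline{\mathbb Q}^\times$.

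For this last relation I would invoke (\ref{wwsc}) for $K_{\mathrm{St}}/F$. Note that $K\neq K_{\mathrm{St}}$ because $K_{\mathrm{CM}}\subset K$ is totally imaginary, so $\mathrm{Gal}(K/K_{\mathrm{St}})=\langle\rho_{\iota_1}\rangle$ has order two; setting $\tilde\tau:=\tau|_{K_{\mathrm{St}}}$ gives $(\mathrm{Art}^{K_{\mathrm{St}}}_{\mathfrak f_0})^{-1}(\tilde\tau)=(\mathrm{Art}^K_{\mathfrak f_0})^{-1}(\tau)\sqcup(\mathrm{Art}^K_{\mathfrak f_0})^{-1}(\tau\rho_{\iota_1})$, and combining Theorem \ref{candec} with (\ref{wwsc}) produces
\[
\prod_{c}\prod_{\iota\colon F\hookrightarrow\mathbb R}\exp(X(c,\iota))=\exp(\zeta_S'(0,\tilde\tau))\in\overline{\mathbb Q}^\times,
\]
the $c$-product being taken over the above union. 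For each $\iota=\iota_i$ with $i\neq 1$, Theorem \ref{main} (with this $i$ and with $j=1$) pairs $c\leftrightarrow c_1c$ inside the $c_1$-invariant union---an involution without fixed points since $c_1^2=1$ and $c_1\neq 1$ in $C_{\mathfrak f_0}$---so $\prod_c\exp(X(c,\iota_i))\in\overline{\mathbb Q}^\times$ for each such $i$. Dividing the total product by these algebraic factors isolates $L(\tau)\cdot L(\tau\rho_{\iota_1})\in\overline{\mathbb Q}^\times$, completing the reduction.

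The main obstacle is the bookkeeping across the three Artin maps (to $K$, $K_{\mathrm{CM}}$, and $K_{\mathrm{St}}$) and tracking how the classes $c_j$ interact with the various fibers when Theorem \ref{main} is iterated; once this is organized cleanly, no genuinely new analytic input is needed beyond Proposition \ref{trans}, Theorem \ref{main}, Theorem \ref{candec}, and the two hypotheses.
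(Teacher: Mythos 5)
Your proof is correct and follows essentially the same route as the paper's: apply Proposition \ref{trans} to $K_{\mathrm{CM}}/F$, reduce to showing $L(\tau)\equiv L(\tau')$ whenever $\tau|_{K_{\mathrm{CM}}}=\tau'|_{K_{\mathrm{CM}}}$, and then handle the generators $\rho_\iota\rho_{\iota'}$ of $\mathrm{Gal}(K/K_{\mathrm{CM}})$ via Theorem \ref{main} for $\iota\neq\mathrm{id}$ and via (\ref{wwsc}) (i.e., (\ref{wwsc2})) for $\iota=\mathrm{id}$. The only cosmetic difference is that you re-derive the equivalence of (\ref{wwsc}) for $K_{\mathrm{St}}/F$ with (\ref{wwsc2}) inline, whereas the paper simply cites the argument from Proposition \ref{mainprp}.
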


We note that, in the above Proposition, $K_{\mathrm{St}}$ is the maximal subfield of $K$
in which the real place corresponding to $\mathrm{id}$ splits completely, and that 
(\ref{wwsc}) is a weaker version of Conjecture \ref{wsc}.

\begin{proof}
Let $\mathrm{Art}_{\mathfrak f_0} \colon C_{\mathfrak f_0} \rightarrow \mathrm{Gal}(K/F)$ be as in Proposition \ref{mainprp}, 
$\mathrm{Art}_{\mathfrak f_0,\mathrm{CM}} \colon C_{\mathfrak f_0} \rightarrow \mathrm{Gal}(K_{\mathrm{CM}}/F)$ the associated map.
First we note that (\ref{wwsc}) for $K_{\mathrm{St}}/F$ states that 
\begin{equation} \label{wwsc2}
\prod_{c \in \mathrm{Art}_{\mathfrak f_0}^{-1}(\tau)}\exp(X(c)) 
\cdot \prod_{c \in \mathrm{Art}_{\mathfrak f_0}^{-1}(\tau \rho_{\mathrm{id}})}\exp(X(c))
\in \overline{\mathbb Q}^\times 
\qquad (\tau \in \mathrm{Gal}(K/F)).
\end{equation}
This equivalence follows from the same argument used in the proof of Proposition \ref{mainprp}.
Conjecture \ref{YC} states that for $\tau \in \mathrm{Gal}(K/F)$ we have
\begin{multline} \label{cnj3}
\prod_{c \in \mathrm{Art}_{\mathfrak f_0,\mathrm{CM}}^{-1}(\tau|_{K_{\mathrm{CM}}})}\exp(X(c)) \bmod \overline{\mathbb Q}^\times \\
=\pi^{[K:K_{\mathrm{CM}}]\zeta_{\mathfrak f_0}(0,\tau)}
p_{K_{\mathrm{CM}}}(\tau|_{{K_\mathrm{CM}}},
\mathrm{Inf}^{-1}(\sum_{\sigma \in G}\zeta_{\mathfrak f_0}(0,\sigma)\sigma))^{[K:K_{\mathrm{CM}}]},
\end{multline}
by Proposition \ref{trans} and (\ref{inf}).
On the other hand, we can write for $\tau_0 \in \mathrm{Gal}(K_{\mathrm{CM}}/F)$ 
\begin{equation*}
\prod_{c \in \mathrm{Art}_{\mathfrak f_0,\mathrm{CM}}^{-1}(\tau_0)}\exp(X(c)) 
=\prod_{\tau \in \mathrm{Gal}(K/F),\ \tau|_{K_\mathrm{CM}}=\tau_0}\prod_{c \in \mathrm{Art}_{\mathfrak f_0}^{-1}(\tau)}\exp(X(c)).
\end{equation*}
Hence in order to derive Conjecture \ref{maincnj} from (\ref{cnj3}), we need the following statement:
If $\tau,\tau' \in \mathrm{Gal}(K/F)$ satisfy $\tau|_{K_\mathrm{CM}}=\tau'|_{K_\mathrm{CM}}$, then we have 
\begin{equation} \label{need}
\prod_{c \in \mathrm{Art}_{\mathfrak f_0}^{-1}(\tau)}\exp(X(c)) 
\equiv \prod_{c \in \mathrm{Art}_{\mathfrak f_0}^{-1}(\tau')}\exp(X(c)) \bmod \overline{\mathbb Q}^\times.
\end{equation}
Indeed, for $\tau \in \mathrm{Gal}(K/F)$, $\iota,\iota' \in J_F$, we can write 
\begin{equation} \label{thmsc}
\prod_{c \in \mathrm{Art}_{\mathfrak f_0}^{-1}(\tau)}\exp(X(c)) 
\equiv \prod_{c \in \mathrm{Art}_{\mathfrak f_0}^{-1}(\tau \rho_\iota)}\exp(X(c))^{-1}
\equiv \prod_{c \in \mathrm{Art}_{\mathfrak f_0}^{-1}(\tau \rho_\iota\rho_{\iota'})}\exp(X(c)) \bmod \overline{\mathbb Q}^\times
\end{equation}
by Theorem \ref{main} (when $\iota,\iota' \neq \mathrm{id}$) and by (\ref{wwsc2}) (when $\iota,\iota' = \mathrm{id}$). 
By (\ref{kcm}), we see that (\ref{thmsc}) implies (\ref{need}).
Hence the assertion is clear.
\end{proof}

\end{document}